\newtheorem{theorem}{Theorem}
\theoremstyle{plain}
\newtheorem{corollary}{Corollary}
\newtheorem{example}{Example}
\newtheorem{proposition}{Proposition}
\newtheorem{remark}{Remark}
\numberwithin{equation}{section}
\begin{document}
\title[ Some Trace Inequalities for Operators in Hilbert Spaces]{ Some Trace
Inequalities for Operators in Hilbert Spaces}
\author{S. S. Dragomir$^{1,2}$}
\address{$^{1}$Mathematics, School of Engineering \& Science\\
Victoria University, PO Box 14428\\
Melbourne City, MC 8001, Australia.}
\email{sever.dragomir@vu.edu.au}
\urladdr{http://rgmia.org/dragomir}
\address{$^{2}$School of Computational \& Applied Mathematics, University of
the Witwatersrand, Private Bag 3, Johannesburg 2050, South Africa}
\subjclass{47A63; 47A99.}
\keywords{Trace class operators, Hilbert-Schmidt operators, Trace, Schwarz
inequality, Trace inequalities for matrices, Power series of operators}

\begin{abstract}
Some new trace inequalities for operators in Hilbert spaces are provided.
The superadditivity and monotonicity of some associated functionals are
investigated and applications for power series of such operators are given.
Some trace inequalities for matrices are also derived. Examples for the
operator exponential and other similar functions are presented as well.
\end{abstract}

\maketitle

\section{Introduction}

Let $\left( H,\left\langle \cdot ,\cdot \right\rangle \right) $ be a complex
Hilbert space and $\left\{ e_{i}\right\} _{i\in I}$ an \textit{orthonormal
basis} of $H.$ We say that $A\in \mathcal{B}\left( H\right) $ is a \textit{%
Hilbert-Schmidt operator} if 
\begin{equation}
\sum_{i\in I}\left\Vert Ae_{i}\right\Vert ^{2}<\infty .  \label{e.1.1}
\end{equation}%
It is well know that, if $\left\{ e_{i}\right\} _{i\in I}$ and $\left\{
f_{j}\right\} _{j\in J}$ are orthonormal bases for $H$ and $A\in \mathcal{B}%
\left( H\right) $ then%
\begin{equation}
\sum_{i\in I}\left\Vert Ae_{i}\right\Vert ^{2}=\sum_{j\in I}\left\Vert
Af_{j}\right\Vert ^{2}=\sum_{j\in I}\left\Vert A^{\ast }f_{j}\right\Vert ^{2}
\label{e.1.2}
\end{equation}%
showing that the definition (\ref{e.1.1}) is independent of the orthonormal
basis and $A$ is a Hilbert-Schmidt operator iff $A^{\ast }$ is a
Hilbert-Schmidt operator.

Let $\mathcal{B}_{2}\left( H\right) $ the set of Hilbert-Schmidt operators
in $\mathcal{B}\left( H\right) .$ For $A\in \mathcal{B}_{2}\left( H\right) $
we define%
\begin{equation}
\left\Vert A\right\Vert _{2}:=\left( \sum_{i\in I}\left\Vert
Ae_{i}\right\Vert ^{2}\right) ^{1/2}  \label{e.1.3}
\end{equation}%
for $\left\{ e_{i}\right\} _{i\in I}$ an orthonormal basis of $H.$ This
definition does not depend on the choice of the orthonormal basis.

Using the triangle inequality in $l^{2}\left( I\right) ,$ one checks that $%
\mathcal{B}_{2}\left( H\right) $ is a \textit{vector space} and that $%
\left\Vert \cdot \right\Vert _{2}$ is a norm on $\mathcal{B}_{2}\left(
H\right) ,$ which is usually called in the literature as the \textit{%
Hilbert-Schmidt norm}.

Denote \textit{the modulus} of an operator $A\in \mathcal{B}\left( H\right) $
by $\left\vert A\right\vert :=\left( A^{\ast }A\right) ^{1/2}.$

Because $\left\Vert \left\vert A\right\vert x\right\Vert =\left\Vert
Ax\right\Vert $ for all $x\in H,$ $A$ is Hilbert-Schmidt iff $\left\vert
A\right\vert $ is Hilbert-Schmidt and $\left\Vert A\right\Vert
_{2}=\left\Vert \left\vert A\right\vert \right\Vert _{2}.$ From (\ref{e.1.2}%
) we have that if $A\in \mathcal{B}_{2}\left( H\right) ,$ then $A^{\ast }\in 
\mathcal{B}_{2}\left( H\right) $ and $\left\Vert A\right\Vert
_{2}=\left\Vert A^{\ast }\right\Vert _{2}.$

The following theorem collects some of the most important properties of
Hilbert-Schmidt operators:

\begin{theorem}
\label{t.1.1}We have

(i) $\left( \mathcal{B}_{2}\left( H\right) ,\left\Vert \cdot \right\Vert
_{2}\right) $ is a Hilbert space with inner product 
\begin{equation}
\left\langle A,B\right\rangle _{2}:=\sum_{i\in I}\left\langle
Ae_{i},Be_{i}\right\rangle =\sum_{i\in I}\left\langle B^{\ast
}Ae_{i},e_{i}\right\rangle  \label{e.1.4}
\end{equation}%
and the definition does not depend on the choice of the orthonormal basis $%
\left\{ e_{i}\right\} _{i\in I}$;

(ii) We have the inequalities 
\begin{equation}
\left\Vert A\right\Vert \leq \left\Vert A\right\Vert _{2}  \label{e.1.4.a}
\end{equation}
for any $A\in \mathcal{B}_{2}\left( H\right) $ and 
\begin{equation}
\left\Vert AT\right\Vert _{2},\left\Vert TA\right\Vert _{2}\leq \left\Vert
T\right\Vert \left\Vert A\right\Vert _{2}  \label{e.1.4.b}
\end{equation}%
for any $A\in \mathcal{B}_{2}\left( H\right) $ and $T\in \mathcal{B}\left(
H\right) ;$

(iii) $\mathcal{B}_{2}\left( H\right) $ is an operator ideal in $\mathcal{B}%
\left( H\right) ,$ i.e. 
\begin{equation*}
\mathcal{B}\left( H\right) \mathcal{B}_{2}\left( H\right) \mathcal{B}\left(
H\right) \subseteq \mathcal{B}_{2}\left( H\right) ;
\end{equation*}

(iv) $\mathcal{B}_{fin}\left( H\right) ,$ the space of operators of finite
rank, is a dense subspace of $\mathcal{B}_{2}\left( H\right) ;$

(v) $\mathcal{B}_{2}\left( H\right) \subseteq \mathcal{K}\left( H\right) ,$
where $\mathcal{K}\left( H\right) $ denotes the algebra of compact operators
on $H.$
\end{theorem}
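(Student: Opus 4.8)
The plan is to prove the five items in an order dictated by their mutual dependencies rather than by the order of listing: first the operator-norm bound and the submultiplicativity of part (ii), then the ideal property (iii), then the inner-product structure and completeness in (i), and finally the density statement (iv) and the inclusion (v). Throughout, essentially everything reduces to manipulating the square-summable families $\left\{ \left\Vert Ae_{i}\right\Vert \right\} _{i\in I}$ together with the basis-independence identity (\ref{e.1.2}), which is taken as known.

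For the inequality (\ref{e.1.4.a}): given a unit vector $x\in H$, extend $\left\{ x\right\} $ to an orthonormal basis; then $\left\Vert Ax\right\Vert ^{2}\leq \sum_{i}\left\Vert Ae_{i}\right\Vert ^{2}=\left\Vert A\right\Vert _{2}^{2}$, and taking the supremum over unit vectors gives $\left\Vert A\right\Vert \leq \left\Vert A\right\Vert _{2}$. For (\ref{e.1.4.b}), the estimate $\sum_{i}\left\Vert TAe_{i}\right\Vert ^{2}\leq \left\Vert T\right\Vert ^{2}\sum_{i}\left\Vert Ae_{i}\right\Vert ^{2}$ yields $\left\Vert TA\right\Vert _{2}\leq \left\Vert T\right\Vert \left\Vert A\right\Vert _{2}$, while the bound for $\left\Vert AT\right\Vert _{2}$ follows by passing to adjoints, using $\left\Vert AT\right\Vert _{2}=\left\Vert T^{\ast }A^{\ast }\right\Vert _{2}\leq \left\Vert T^{\ast }\right\Vert \left\Vert A^{\ast }\right\Vert _{2}=\left\Vert T\right\Vert \left\Vert A\right\Vert _{2}$ together with $\left\Vert A^{\ast }\right\Vert _{2}=\left\Vert A\right\Vert _{2}$. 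Item (iii) is then immediate: two applications of (\ref{e.1.4.b}) show $SAT\in \mathcal{B}_{2}\left( H\right) $ whenever $A\in \mathcal{B}_{2}\left( H\right) $ and $S,T\in \mathcal{B}\left( H\right) $.

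For (i), I would first check that the defining series converges absolutely for $A,B\in \mathcal{B}_{2}\left( H\right) $: by the Cauchy--Schwarz inequality in $H$ and then in $\ell ^{2}\left( I\right) $,
\[
\sum_{i\in I}\left\vert \left\langle Ae_{i},Be_{i}\right\rangle \right\vert \leq \sum_{i\in I}\left\Vert Ae_{i}\right\Vert \left\Vert Be_{i}\right\Vert \leq \left\Vert A\right\Vert _{2}\left\Vert B\right\Vert _{2},
\]
so $\left\langle \cdot ,\cdot \right\rangle _{2}$ is well defined; sesquilinearity, conjugate symmetry and positive definiteness are inherited from $\left\langle \cdot ,\cdot \right\rangle $, and $\left\langle A,A\right\rangle _{2}=\left\Vert A\right\Vert _{2}^{2}$. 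Basis-independence of $\left\langle A,B\right\rangle _{2}$ follows from the polarization identity, which writes it as a finite linear combination of terms $\left\Vert A+\lambda B\right\Vert _{2}^{2}$, each basis-independent by (\ref{e.1.2}). The one substantive point is completeness. Given a $\left\Vert \cdot \right\Vert _{2}$-Cauchy sequence $\left( A_{n}\right) $, inequality (\ref{e.1.4.a}) makes it Cauchy in operator norm, hence $A_{n}\rightarrow A$ in $\mathcal{B}\left( H\right) $ for some $A$. For any finite $F\subseteq I$, letting $m\rightarrow \infty $ in $\sum_{i\in F}\left\Vert \left( A_{n}-A_{m}\right) e_{i}\right\Vert ^{2}\leq \left\Vert A_{n}-A_{m}\right\Vert _{2}^{2}$ gives $\sum_{i\in F}\left\Vert \left( A_{n}-A\right) e_{i}\right\Vert ^{2}\leq \limsup_{m}\left\Vert A_{n}-A_{m}\right\Vert _{2}^{2}$; taking the supremum over all finite $F$ shows $A_{n}-A\in \mathcal{B}_{2}\left( H\right) $ (hence $A\in \mathcal{B}_{2}\left( H\right) $, a vector space) and $\left\Vert A_{n}-A\right\Vert _{2}\rightarrow 0$.

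Finally, for (iv), given $A\in \mathcal{B}_{2}\left( H\right) $ and $\varepsilon >0$, choose a finite $F\subseteq I$ with $\sum_{i\notin F}\left\Vert Ae_{i}\right\Vert ^{2}<\varepsilon ^{2}$; if $P_{F}$ is the orthogonal projection onto $\mathrm{span}\left\{ e_{i}:i\in F\right\} $, then $AP_{F}\in \mathcal{B}_{fin}\left( H\right) $ and $\left\Vert A-AP_{F}\right\Vert _{2}^{2}=\sum_{i\notin F}\left\Vert Ae_{i}\right\Vert ^{2}<\varepsilon ^{2}$. For (v), finite-rank operators are compact and $\mathcal{K}\left( H\right) $ is closed in $\mathcal{B}\left( H\right) $ for the operator norm; by (iv) and (\ref{e.1.4.a}), any $A\in \mathcal{B}_{2}\left( H\right) $ is an operator-norm limit of finite-rank operators, hence compact. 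I expect the completeness argument in (i) to be the only real obstacle; everything else is routine once (\ref{e.1.2}) and (\ref{e.1.4.a}) are in hand.
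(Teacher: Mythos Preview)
The paper does not supply a proof of Theorem~\ref{t.1.1}; it is presented in the introduction as a collection of well-known facts about Hilbert--Schmidt operators, with no accompanying argument. Your proposal therefore cannot be compared against a proof in the paper, but it stands on its own as a correct and complete treatment following the standard textbook route (essentially the one found in the reference \cite{Si} or any functional-analysis text): establish the norm inequalities first, deduce the ideal property, build the inner product via polarization and prove completeness by passing through the operator-norm limit, and then obtain density of finite-rank operators and compactness as consequences. Each step you give is sound; in particular the completeness argument via finite partial sums and $\limsup$ is the canonical one, and your use of (\ref{e.1.2}) together with polarization to get basis-independence of $\langle\cdot,\cdot\rangle_{2}$ is exactly right.
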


If $\left\{ e_{i}\right\} _{i\in I}$ an orthonormal basis of $H,$ we say
that $A\in \mathcal{B}\left( H\right) $ is \textit{trace class} if 
\begin{equation}
\left\Vert A\right\Vert _{1}:=\sum_{i\in I}\left\langle \left\vert
A\right\vert e_{i},e_{i}\right\rangle <\infty .  \label{e.1.5}
\end{equation}%
The definition of $\left\Vert A\right\Vert _{1}$ does not depend on the
choice of the orthonormal basis $\left\{ e_{i}\right\} _{i\in I}.$ We denote
by $\mathcal{B}_{1}\left( H\right) $ the set of trace class operators in $%
\mathcal{B}\left( H\right) .$

The following proposition holds:

\begin{proposition}
\label{p.1.1}If $A\in \mathcal{B}\left( H\right) ,$ then the following are
equivalent:

(i) $A\in \mathcal{B}_{1}\left( H\right) ;$

(ii) $\left\vert A\right\vert ^{1/2}\in \mathcal{B}_{2}\left( H\right) ;$

(ii) $A$ (or $\left\vert A\right\vert )$ is the product of two elements of $%
\mathcal{B}_{2}\left( H\right) .$
\end{proposition}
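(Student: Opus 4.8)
The plan is to prove the cycle of implications $(i)\Rightarrow (ii)\Rightarrow (iii)\Rightarrow (i)$, where in the third condition I treat the statement both for $A$ and for $\left\vert A\right\vert $. Throughout I write $\left\vert A\right\vert ^{1/2}$ for the positive square root of the positive operator $\left\vert A\right\vert =\left( A^{\ast }A\right) ^{1/2}$, obtained from the continuous functional calculus; it is bounded, self-adjoint and positive with $\left( \left\vert A\right\vert ^{1/2}\right) ^{2}=\left\vert A\right\vert $. I also use the polar decomposition $A=U\left\vert A\right\vert $, where $U$ is a partial isometry, so that $\left\Vert U\right\Vert \leq 1$ and $U^{\ast }A=\left\vert A\right\vert $.

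The equivalence $(i)\Leftrightarrow (ii)$ is a rewriting of the definitions: for an arbitrary orthonormal basis $\left\{ e_{i}\right\} _{i\in I}$ and each $i$, self-adjointness of $\left\vert A\right\vert ^{1/2}$ gives $\left\langle \left\vert A\right\vert e_{i},e_{i}\right\rangle =\left\langle \left( \left\vert A\right\vert ^{1/2}\right) ^{2}e_{i},e_{i}\right\rangle =\left\Vert \left\vert A\right\vert ^{1/2}e_{i}\right\Vert ^{2}$, so that $\left\Vert A\right\Vert _{1}=\left\Vert \left\vert A\right\vert ^{1/2}\right\Vert _{2}^{2}$ after summation over $i\in I$; hence $\left\Vert A\right\Vert _{1}<\infty $ exactly when $\left\vert A\right\vert ^{1/2}\in \mathcal{B}_{2}\left( H\right) $. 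For $(ii)\Rightarrow (iii)$, if $\left\vert A\right\vert ^{1/2}\in \mathcal{B}_{2}\left( H\right) $ then $\left\vert A\right\vert =\left\vert A\right\vert ^{1/2}\cdot \left\vert A\right\vert ^{1/2}$ exhibits $\left\vert A\right\vert $ as a product of two Hilbert-Schmidt operators, and $A=U\left\vert A\right\vert =\left( U\left\vert A\right\vert ^{1/2}\right) \left\vert A\right\vert ^{1/2}$ does the same for $A$, because $U\left\vert A\right\vert ^{1/2}\in \mathcal{B}_{2}\left( H\right) $ by the ideal property, specifically by inequality (\ref{e.1.4.b}) in Theorem \ref{t.1.1}.

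For $(iii)\Rightarrow (i)$, assume first that $A=BC$ with $B,C\in \mathcal{B}_{2}\left( H\right) $. The polar decomposition gives $\left\vert A\right\vert =U^{\ast }A=\left( U^{\ast }B\right) C$, where $U^{\ast }B\in \mathcal{B}_{2}\left( H\right) $ again by (\ref{e.1.4.b}), so its adjoint $B^{\ast }U$ belongs to $\mathcal{B}_{2}\left( H\right) $ with $\left\Vert B^{\ast }U\right\Vert _{2}=\left\Vert U^{\ast }B\right\Vert _{2}\leq \left\Vert B\right\Vert _{2}$. For each $i$ we have $\left\langle \left\vert A\right\vert e_{i},e_{i}\right\rangle =\left\langle Ce_{i},B^{\ast }Ue_{i}\right\rangle $, a nonnegative quantity since $\left\vert A\right\vert \geq 0$, and the Schwarz inequality in $H$ followed by the Cauchy-Schwarz inequality in $l^{2}\left( I\right) $ yields
\[
\left\Vert A\right\Vert _{1}=\sum_{i\in I}\left\langle \left\vert A\right\vert e_{i},e_{i}\right\rangle \leq \sum_{i\in I}\left\Vert Ce_{i}\right\Vert \left\Vert B^{\ast }Ue_{i}\right\Vert \leq \left\Vert C\right\Vert _{2}\left\Vert B^{\ast }U\right\Vert _{2}\leq \left\Vert B\right\Vert _{2}\left\Vert C\right\Vert _{2}<\infty .
\]
If instead $\left\vert A\right\vert =BC$ with $B,C\in \mathcal{B}_{2}\left( H\right) $, the same estimate works with the polar decomposition step deleted, since then $\left\langle \left\vert A\right\vert e_{i},e_{i}\right\rangle =\left\langle Ce_{i},B^{\ast }e_{i}\right\rangle $. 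In either case $A\in \mathcal{B}_{1}\left( H\right) $, closing the cycle.

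The factorization identities and the Cauchy-Schwarz estimate are routine; the only step requiring care, and the one I expect to be the main (though mild) obstacle, is the passage through the polar decomposition: one must use that $A=U\left\vert A\right\vert $ with $U$ a partial isometry satisfying $U^{\ast }A=\left\vert A\right\vert $, and then confirm via the ideal property of $\mathcal{B}_{2}\left( H\right) $ that $U\left\vert A\right\vert ^{1/2}$ and $U^{\ast }B$ really are Hilbert-Schmidt. Everything else is a direct unwinding of the definitions of $\left\Vert \cdot \right\Vert _{1}$ and $\left\Vert \cdot \right\Vert _{2}$.
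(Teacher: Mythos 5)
Your proof is correct: the identity $\left\Vert A\right\Vert _{1}=\left\Vert \left\vert A\right\vert ^{1/2}\right\Vert _{2}^{2}$ gives (i)$\Leftrightarrow$(ii), the polar decomposition together with the ideal property (\ref{e.1.4.b}) gives (ii)$\Rightarrow$(iii), and the Cauchy--Schwarz estimate $\sum_{i\in I}\left\langle \left\vert A\right\vert e_{i},e_{i}\right\rangle \leq \left\Vert B\right\Vert _{2}\left\Vert C\right\Vert _{2}$, using $\left\vert A\right\vert =\left( U^{\ast }B\right) C$, closes the cycle in both readings of (iii). Note that the paper states Proposition \ref{p.1.1} as known background and supplies no proof, so there is nothing to compare against; your argument is the standard one and fills that gap correctly.
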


The following properties are also well known:

\begin{theorem}
\label{t.1.2}With the above notations:

(i) We have 
\begin{equation}
\left\Vert A\right\Vert _{1}=\left\Vert A^{\ast }\right\Vert _{1}\text{ and }%
\left\Vert A\right\Vert _{2}\leq \left\Vert A\right\Vert _{1}  \label{e.1.6}
\end{equation}%
for any $A\in \mathcal{B}_{1}\left( H\right) ;$

(ii) $\mathcal{B}_{1}\left( H\right) $ is an operator ideal in $\mathcal{B}%
\left( H\right) ,$ i.e. 
\begin{equation*}
\mathcal{B}\left( H\right) \mathcal{B}_{1}\left( H\right) \mathcal{B}\left(
H\right) \subseteq \mathcal{B}_{1}\left( H\right) ;
\end{equation*}

(iii) We have%
\begin{equation*}
\mathcal{B}_{2}\left( H\right) \mathcal{B}_{2}\left( H\right) =\mathcal{B}%
_{1}\left( H\right) ;
\end{equation*}

(iv) We have%
\begin{equation*}
\left\Vert A\right\Vert _{1}=\sup \left\{ \left\langle A,B\right\rangle _{2}%
\text{ }|\text{ }B\in \mathcal{B}_{2}\left( H\right) ,\text{ }\left\Vert
B\right\Vert \leq 1\right\} ;
\end{equation*}

(v) $\left( \mathcal{B}_{1}\left( H\right) ,\left\Vert \cdot \right\Vert
_{1}\right) $ is a Banach space.

(iv) We have the following isometric isomorphisms%
\begin{equation*}
\mathcal{B}_{1}\left( H\right) \cong K\left( H\right) ^{\ast }\text{ and }%
\mathcal{B}_{1}\left( H\right) ^{\ast }\cong \mathcal{B}\left( H\right) ,
\end{equation*}%
where $K\left( H\right) ^{\ast }$ is the dual space of $K\left( H\right) $
and $\mathcal{B}_{1}\left( H\right) ^{\ast }$ is the dual space of $\mathcal{%
B}_{1}\left( H\right) .$
\end{theorem}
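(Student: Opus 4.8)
The plan is to reduce everything to the polar decomposition $A=U\left\vert A\right\vert $, the spectral representation of the positive compact operator $\left\vert A\right\vert $, Proposition \ref{p.1.1}, and the Hilbert--Schmidt facts collected in Theorem \ref{t.1.1}. I would first note that part (iii) is essentially a restatement of Proposition \ref{p.1.1}: $A\in \mathcal{B}_{1}\left( H\right) $ iff $A$ is a product of two Hilbert--Schmidt operators, i.e. $\mathcal{B}_{2}\left( H\right) \mathcal{B}_{2}\left( H\right) =\mathcal{B}_{1}\left( H\right) $. For the ideal property (ii) I would then write $A=BC$ with $B,C\in \mathcal{B}_{2}\left( H\right) $ and, given $T,S\in \mathcal{B}\left( H\right) $, factor $TAS=\left( TB\right) \left( CS\right) $; since $\mathcal{B}_{2}\left( H\right) $ is an ideal (Theorem \ref{t.1.1}(ii)--(iii)), $TB,CS\in \mathcal{B}_{2}\left( H\right) $, whence $TAS\in \mathcal{B}_{1}\left( H\right) $. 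That $\mathcal{B}_{1}\left( H\right) $ is a vector space will follow from the triangle inequality for $\left\Vert \cdot \right\Vert _{1}$ established below. For part (i) I would diagonalise $\left\vert A\right\vert =\sum_{n}\lambda _{n}\left\langle \cdot ,e_{n}\right\rangle e_{n}$ with $\lambda _{n}\geq 0$, so that $\left\Vert A\right\Vert _{1}=\sum_{n}\lambda _{n}$ and $\left\Vert A\right\Vert _{2}^{2}=\left\Vert \left\vert A\right\vert \right\Vert _{2}^{2}=\sum_{n}\lambda _{n}^{2}\leq \left( \sum_{n}\lambda _{n}\right) ^{2}$, giving $\left\Vert A\right\Vert _{2}\leq \left\Vert A\right\Vert _{1}$; and I would get $\left\Vert A^{\ast }\right\Vert _{1}=\left\Vert A\right\Vert _{1}$ from $\left\vert A^{\ast }\right\vert =U\left\vert A\right\vert U^{\ast }$ via $\mathrm{tr}\left( U\left\vert A\right\vert U^{\ast }\right) =\mathrm{tr}\left( \left\vert A\right\vert U^{\ast }U\right) =\mathrm{tr}\left\vert A\right\vert $, since $U^{\ast }U$ is the projection onto $\overline{\mathrm{ran}\,|A|}$ and $\left\vert A\right\vert U^{\ast }U=\left\vert A\right\vert $.

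For the variational identity (iv), for $A\in \mathcal{B}_{1}\left( H\right) $ and $B\in \mathcal{B}_{2}\left( H\right) $ with $\left\Vert B\right\Vert \leq 1$ I would write $A=U\left\vert A\right\vert $, diagonalise $\left\vert A\right\vert $ as above, and compute $\left\langle A,B\right\rangle _{2}=\sum_{n}\lambda _{n}\left\langle Ue_{n},Be_{n}\right\rangle $, where $\left\vert \left\langle Ue_{n},Be_{n}\right\rangle \right\vert \leq 1$, so $\left\vert \left\langle A,B\right\rangle _{2}\right\vert \leq \sum_{n}\lambda _{n}=\left\Vert A\right\Vert _{1}$; taking $B=UP_{N}$ with $P_{N}$ the spectral projection of $\left\vert A\right\vert $ onto $\left\{ e_{1},\dots ,e_{N}\right\} $ gives $\left\langle A,UP_{N}\right\rangle _{2}=\sum_{n\leq N}\lambda _{n}\uparrow \left\Vert A\right\Vert _{1}$, so the supremum equals $\left\Vert A\right\Vert _{1}$. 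Subadditivity of $\left\Vert \cdot \right\Vert _{1}$ is then immediate, and with homogeneity and $\left\Vert A\right\Vert _{1}\geq \left\Vert A\right\Vert _{2}>0$ for $A\neq 0$ this shows $\left\Vert \cdot \right\Vert _{1}$ is a norm, completing (ii). For completeness (v) I would take a $\left\Vert \cdot \right\Vert _{1}$-Cauchy sequence $\left( A_{k}\right) $, note it is $\left\Vert \cdot \right\Vert _{2}$-Cauchy by (i), hence $\left\Vert \cdot \right\Vert _{2}$- and operator-norm convergent to some $A$, and then use a Fatou-type estimate on the finite partial sums $\sum_{i\leq N}\left\langle \left\vert A-A_{k}\right\vert e_{i},e_{i}\right\rangle $ (or the identity (iv)) to conclude $A\in \mathcal{B}_{1}\left( H\right) $ and $\left\Vert A-A_{k}\right\Vert _{1}\rightarrow 0$.

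For the duality isomorphisms (vi), I would associate with each $T\in \mathcal{B}\left( H\right) $ the functional $L_{T}\left( A\right) :=\mathrm{tr}\left( TA\right) $ on $\mathcal{B}_{1}\left( H\right) $, which is bounded because $\left\vert \mathrm{tr}\left( TA\right) \right\vert \leq \left\Vert TA\right\Vert _{1}\leq \left\Vert T\right\Vert \left\Vert A\right\Vert _{1}$, and test on rank-one operators $\left\langle \cdot ,y\right\rangle x$ (of trace norm $\left\Vert x\right\Vert \left\Vert y\right\Vert $) to get $\left\Vert L_{T}\right\Vert =\left\Vert T\right\Vert $; conversely a bounded $L$ on $\mathcal{B}_{1}\left( H\right) $ yields the bounded sesquilinear form $\left( x,y\right) \mapsto L\left( \left\langle \cdot ,y\right\rangle x\right) $, hence a $T\in \mathcal{B}\left( H\right) $ with $L=L_{T}$ on finite-rank operators, and these are $\left\Vert \cdot \right\Vert _{1}$-dense in $\mathcal{B}_{1}\left( H\right) $ (truncate the spectral expansion), so $L=L_{T}$; this gives $\mathcal{B}_{1}\left( H\right) ^{\ast }\cong \mathcal{B}\left( H\right) $. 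The isomorphism $\mathcal{B}_{1}\left( H\right) \cong \mathcal{K}\left( H\right) ^{\ast }$ I would prove the same way, with $A\in \mathcal{B}_{1}\left( H\right) $ acting on $\mathcal{K}\left( H\right) $ by $K\mapsto \mathrm{tr}\left( AK\right) $, the norm equality coming from (iv) (whose extremising operators can be chosen finite-rank, hence compact) and the density of finite-rank operators in $\mathcal{K}\left( H\right) $.

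The hard part will be setting up the trace pairing cleanly: proving that $\mathrm{tr}\left( XY\right) $ is well defined and cyclic for $X,Y\in \mathcal{B}_{2}\left( H\right) $ --- which I would do by expanding in matrix coefficients and using Fubini, the double series being absolutely convergent by $\sum_{i,j}\left\vert x_{ij}\right\vert \left\vert y_{ji}\right\vert \leq \left\Vert X\right\Vert _{2}\left\Vert Y\right\Vert _{2}$ --- and then extending it to the pairing $\mathrm{tr}\left( TA\right) $ for $T\in \mathcal{B}\left( H\right) $, $A\in \mathcal{B}_{1}\left( H\right) $, together with the estimate $\left\Vert TA\right\Vert _{1}\leq \left\Vert T\right\Vert \left\Vert A\right\Vert _{1}$. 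Both $\left\Vert A\right\Vert _{1}=\left\Vert A^{\ast }\right\Vert _{1}$ and the two duality isomorphisms rest on this; once it and the variational identity (iv) are in hand, everything else is bookkeeping.
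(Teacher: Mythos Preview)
The paper does not actually prove this theorem: it is stated in the Introduction as a collection of well-known background facts about trace class operators, with no accompanying proof. Your proposal is therefore not competing against anything in the paper; it supplies what the paper omits.

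That said, your outline is correct and follows the standard textbook route (polar decomposition plus spectral expansion of $\left\vert A\right\vert$, Proposition~\ref{p.1.1} for the factorisation $\mathcal{B}_{1}=\mathcal{B}_{2}\mathcal{B}_{2}$, the variational formula (iv) to get the triangle inequality and hence the norm and Banach-space structure, and the trace pairing plus density of finite-rank operators for the dualities). A couple of minor remarks: in (iv) the paper writes $\sup\{\langle A,B\rangle_{2}:\ldots\}$ without a modulus, so strictly speaking you should either note that the supremum is attained along a sequence giving real nonnegative values (your $B=UP_{N}$ does this) or read the statement with an implicit $|\cdot|$; and your argument for $\left\Vert A^{\ast}\right\Vert_{1}=\left\Vert A\right\Vert_{1}$ invokes cyclicity of the trace, which in the paper's logical order is Theorem~\ref{t.3.1}(ii), stated after the present theorem --- you can avoid the circularity by computing $\mathrm{tr}\,|A^{\ast}|=\sum_{i}\langle |A|U^{\ast}e_{i},U^{\ast}e_{i}\rangle$ directly in a basis adapted to the partial isometry $U$, as you essentially indicate in your closing paragraph. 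None of this affects the correctness of the plan.
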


We define the \textit{trace} of a trace class operator $A\in \mathcal{B}%
_{1}\left( H\right) $ to be%
\begin{equation}
\limfunc{tr}\left( A\right) :=\sum_{i\in I}\left\langle
Ae_{i},e_{i}\right\rangle  \label{e.1.7}
\end{equation}%
where $\left\{ e_{i}\right\} _{i\in I}$ an orthonormal basis of $H.$ Note
that this coincides with the usual definition of the trace if $H$ is
finite-dimensional. We observe that the series (\ref{e.1.7}) converges
absolutely and it is independent from the choice of basis.

The following result collects some properties of the trace:

\begin{theorem}
\label{t.3.1}We have

(i) If $A\in \mathcal{B}_{1}\left( H\right) $ then $A^{\ast }\in \mathcal{B}%
_{1}\left( H\right) $ and 
\begin{equation}
\limfunc{tr}\left( A^{\ast }\right) =\overline{\limfunc{tr}\left( A\right) };
\label{e.1.8}
\end{equation}

(ii) If $A\in \mathcal{B}_{1}\left( H\right) $ and $T\in \mathcal{B}\left(
H\right) ,$ then $AT,$ $TA\in \mathcal{B}_{1}\left( H\right) $ and%
\begin{equation}
\limfunc{tr}\left( AT\right) =\limfunc{tr}\left( TA\right) \text{ and }%
\left\vert \limfunc{tr}\left( AT\right) \right\vert \leq \left\Vert
A\right\Vert _{1}\left\Vert T\right\Vert ;  \label{e.1.9}
\end{equation}

(iii) $\limfunc{tr}\left( \cdot \right) $ is a bounded linear functional on $%
\mathcal{B}_{1}\left( H\right) $ with $\left\Vert \limfunc{tr}\right\Vert
=1; $

(iv) If $A,$ $B\in \mathcal{B}_{2}\left( H\right) $ then $AB,$ $BA\in 
\mathcal{B}_{1}\left( H\right) $ and $\limfunc{tr}\left( AB\right) =\limfunc{%
tr}\left( BA\right) ;$

(v) $\mathcal{B}_{fin}\left( H\right) $ is a dense subspace of $\mathcal{B}%
_{1}\left( H\right) .$
\end{theorem}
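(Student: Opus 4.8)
\section*{Proof proposal}

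The plan is to establish part (iv) first, since the cyclicity of the trace on Hilbert--Schmidt operators is what powers parts (i)--(iii), and then to obtain (v) from Proposition~\ref{p.1.1} together with Theorem~\ref{t.1.1}(iv). The one identity I will lean on repeatedly is $\left\langle X,Y\right\rangle _{2}=\limfunc{tr}\left( Y^{\ast }X\right) $ for $X,Y\in \mathcal{B}_{2}\left( H\right) $, which is nothing but (\ref{e.1.4}) read together with (\ref{e.1.7}), once one notes that $Y^{\ast }X\in \mathcal{B}_{1}\left( H\right) $ by Theorem~\ref{t.1.2}(iii).

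For (iv): if $A,B\in \mathcal{B}_{2}\left( H\right) $ then $AB,BA\in \mathcal{B}_{1}\left( H\right) $ by Theorem~\ref{t.1.2}(iii), so both traces are defined. Fixing one orthonormal basis $\left\{ e_{i}\right\} _{i\in I}$ and inserting $Be_{i}=\sum_{j}\left\langle Be_{i},e_{j}\right\rangle e_{j}$ into $\limfunc{tr}\left( AB\right) =\sum_{i}\left\langle ABe_{i},e_{i}\right\rangle $ gives $\limfunc{tr}\left( AB\right) =\sum_{i}\sum_{j}c_{ij}$ with $c_{ij}:=\left\langle Be_{i},e_{j}\right\rangle \left\langle Ae_{j},e_{i}\right\rangle $; the symmetric computation gives $\limfunc{tr}\left( BA\right) =\sum_{j}\sum_{i}c_{ij}$. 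By the Cauchy--Schwarz inequality, $\sum_{i,j}\left\vert c_{ij}\right\vert \leq \left( \sum_{i,j}\left\vert \left\langle Be_{i},e_{j}\right\rangle \right\vert ^{2}\right) ^{1/2}\left( \sum_{i,j}\left\vert \left\langle Ae_{j},e_{i}\right\rangle \right\vert ^{2}\right) ^{1/2}=\left\Vert B\right\Vert _{2}\left\Vert A\right\Vert _{2}<\infty $, so the family $\left( c_{ij}\right) $ is summable and the two iterated sums coincide. I expect this Fubini interchange to be the one step that needs genuine care; everything after it is bootstrapping from the structural facts already collected.

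Part (i) has two halves. If $A=BC$ with $B,C\in \mathcal{B}_{2}\left( H\right) $ (Proposition~\ref{p.1.1}), then $A^{\ast }=C^{\ast }B^{\ast }$ with $C^{\ast },B^{\ast }\in \mathcal{B}_{2}\left( H\right) $ by (\ref{e.1.2}), hence $A^{\ast }\in \mathcal{B}_{2}\left( H\right) \mathcal{B}_{2}\left( H\right) =\mathcal{B}_{1}\left( H\right) $; and $\limfunc{tr}\left( A^{\ast }\right) =\overline{\limfunc{tr}\left( A\right) }$ follows from $\left\langle A^{\ast }e_{i},e_{i}\right\rangle =\overline{\left\langle Ae_{i},e_{i}\right\rangle }$ together with absolute convergence of (\ref{e.1.7}). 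For (ii), $AT,TA\in \mathcal{B}_{1}\left( H\right) $ is Theorem~\ref{t.1.2}(ii); writing $A=BC$ again and using that $CT,TB\in \mathcal{B}_{2}\left( H\right) $ by (\ref{e.1.4.b}), repeated use of (iv) yields $\limfunc{tr}\left( AT\right) =\limfunc{tr}\left( B\left( CT\right) \right) =\limfunc{tr}\left( \left( CT\right) B\right) =\limfunc{tr}\left( C\left( TB\right) \right) =\limfunc{tr}\left( \left( TB\right) C\right) =\limfunc{tr}\left( TA\right) $. For the estimate $\left\vert \limfunc{tr}\left( AT\right) \right\vert \leq \left\Vert A\right\Vert _{1}\left\Vert T\right\Vert $, I would diagonalise the compact positive operator $\left\vert A\right\vert $ (compact since $\mathcal{B}_{1}\left( H\right) \subseteq \mathcal{B}_{2}\left( H\right) \subseteq \mathcal{K}\left( H\right) $): say $\left\vert A\right\vert u_{n}=\lambda _{n}u_{n}$ with $\lambda _{n}\geq 0$ and $\left\{ u_{n}\right\} $ an orthonormal basis, so $\sum_{n}\lambda _{n}=\left\Vert A\right\Vert _{1}$ and $\left\Vert Au_{n}\right\Vert =\left\Vert \left\vert A\right\vert u_{n}\right\Vert =\lambda _{n}$; then evaluating $\limfunc{tr}\left( AT\right) =\limfunc{tr}\left( TA\right) =\sum_{n}\left\langle Au_{n},T^{\ast }u_{n}\right\rangle $ in this basis gives $\left\vert \limfunc{tr}\left( AT\right) \right\vert \leq \sum_{n}\left\Vert Au_{n}\right\Vert \left\Vert T\right\Vert =\left\Vert T\right\Vert \left\Vert A\right\Vert _{1}$.

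Part (iii) is then immediate: linearity of $\limfunc{tr}$ is clear from (\ref{e.1.7}) and absolute convergence, $\left\Vert \limfunc{tr}\right\Vert \leq 1$ is the estimate of (ii) with $T=1_{H}$, and $\left\Vert \limfunc{tr}\right\Vert \geq 1$ is seen by testing on a rank-one orthogonal projection, whose $\left\Vert \cdot \right\Vert _{1}$-norm and trace are both $1$. Finally, for (v): $\mathcal{B}_{fin}\left( H\right) \subseteq \mathcal{B}_{1}\left( H\right) $ is plain and $\mathcal{B}_{fin}\left( H\right) $ is a subspace; given $A\in \mathcal{B}_{1}\left( H\right) $, factor $A=BC$ with $B,C\in \mathcal{B}_{2}\left( H\right) $ and, using Theorem~\ref{t.1.1}(iv), pick finite-rank $B_{n},C_{n}$ with $\left\Vert B_{n}-B\right\Vert _{2},\left\Vert C_{n}-C\right\Vert _{2}\rightarrow 0$. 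Then $B_{n}C_{n}\in \mathcal{B}_{fin}\left( H\right) $, and using the submultiplicativity estimate $\left\Vert XY\right\Vert _{1}\leq \left\Vert X\right\Vert _{2}\left\Vert Y\right\Vert _{2}$ for $X,Y\in \mathcal{B}_{2}\left( H\right) $ — which itself drops out of Theorem~\ref{t.1.2}(iv), the identity $\left\langle X,Y\right\rangle _{2}=\limfunc{tr}\left( Y^{\ast }X\right) $ and (\ref{e.1.4.b}) — one bounds $\left\Vert A-B_{n}C_{n}\right\Vert _{1}\leq \left\Vert B-B_{n}\right\Vert _{2}\left\Vert C\right\Vert _{2}+\left\Vert B_{n}\right\Vert _{2}\left\Vert C-C_{n}\right\Vert _{2}\rightarrow 0$, since $\left\Vert B_{n}\right\Vert _{2}$ stays bounded. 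Apart from the Fubini step in (iv), the only slightly delicate points are this submultiplicativity estimate and the spectral decomposition of $\left\vert A\right\vert $ used for the sharp constant in (ii)--(iii); neither is serious.
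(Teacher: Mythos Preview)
The paper does not prove this theorem: it is stated in the Introduction as background, alongside Theorems~\ref{t.1.1} and~\ref{t.1.2} and Proposition~\ref{p.1.1}, with no argument supplied. So there is no ``paper's proof'' to compare against.

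That said, your proposal is a correct and self-contained derivation of these standard facts from the preliminaries the paper has assembled. The Fubini step in (iv) is indeed the only place requiring care, and your summability bound $\sum_{i,j}\left\vert c_{ij}\right\vert \leq \left\Vert A\right\Vert _{2}\left\Vert B\right\Vert _{2}$ is exactly what is needed. The bootstrap to (ii) via the factorisation $A=BC$ and two applications of (iv) is clean, and your spectral argument for $\left\vert \limfunc{tr}\left( AT\right) \right\vert \leq \left\Vert A\right\Vert _{1}\left\Vert T\right\Vert $ works once one remembers to include an orthonormal basis for $\ker \left\vert A\right\vert $ among the $u_{n}$ (those terms contribute zero). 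The derivation of $\left\Vert XY\right\Vert _{1}\leq \left\Vert X\right\Vert _{2}\left\Vert Y\right\Vert _{2}$ from Theorem~\ref{t.1.2}(iv) and (\ref{e.1.4.b}) that you sketch for (v) does go through: for $\left\Vert B\right\Vert \leq 1$ one has $\left\langle XY,B\right\rangle _{2}=\limfunc{tr}\left( B^{\ast }XY\right) =\left\langle Y,X^{\ast }B\right\rangle _{2}$ and $\left\Vert X^{\ast }B\right\Vert _{2}\leq \left\Vert X\right\Vert _{2}$. Nothing is missing.
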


Utilising the trace notation we obviously have that 
\begin{equation*}
\left\langle A,B\right\rangle _{2}=\limfunc{tr}\left( B^{\ast }A\right) =%
\limfunc{tr}\left( AB^{\ast }\right) \text{ and }\left\Vert A\right\Vert
_{2}^{2}=\limfunc{tr}\left( A^{\ast }A\right) =\limfunc{tr}\left( \left\vert
A\right\vert ^{2}\right)
\end{equation*}%
for any $A,$ $B\in \mathcal{B}_{2}\left( H\right) .$

Now, for the finite dimensional case, it is well known that the trace
functional is \textit{submultiplicative}, that is, for \textit{positive
semidefinite matrices} $A$ and $B$ in $M_{n}(\mathbb{C})$,%
\begin{equation*}
0\leq \limfunc{tr}(AB)\leq \limfunc{tr}\left( A\right) \limfunc{tr}\left(
B\right) .
\end{equation*}%
Therefore%
\begin{equation*}
0\leq \limfunc{tr}(A^{k})\leq \left[ \limfunc{tr}\left( A\right) \right]
^{k},
\end{equation*}%
where $k$ is any positive integer.

In 2000, Yang \cite{Y} proved a matrix trace inequality%
\begin{equation}
\limfunc{tr}\left[ (AB)^{k}\right] \leq (\limfunc{tr}A)^{k}(\limfunc{tr}%
B)^{k},  \label{e.1.10}
\end{equation}%
where $A$ and $B$ are positive semidefinite matrices over $\mathbb{C}$ of
the same order $n$ and $k$ is any positive integer. For related works the
reader can refer to \cite{Ch}, \cite{C}, \cite{N} and \cite{Y1}, which are
continuations of the work of Bellman \cite{B}.

If $\left( H,\left\langle \cdot ,\cdot \right\rangle \right) $ is a
separable infinite-dimensional Hilbert space then the inequality (\ref%
{e.1.10}) is also valid for any positive operators $A,$ $B\in \mathcal{B}%
_{1}\left( H\right) .$ This result was obtained by L. Liu in 2007, see \cite%
{L}.

In 2001, Yang et al. \cite{YYT} improved (\ref{e.1.10}) as follows:%
\begin{equation}
\limfunc{tr}\left[ (AB)^{m}\right] \leq \left[ \limfunc{tr}\left(
A^{2m}\right) \limfunc{tr}\left( B^{2m}\right) \right] ^{1/2},
\label{e.1.11}
\end{equation}%
where $A$ and $B$ are positive semidefinite matrices over $\mathbb{C}$ of
the same order and $m$ is any positive integer.

In \cite{SA} the authors have proved many trace inequalities for sums and
products of matrices. For instance, if $A$ and $B$ are positive semidefinite
matrices in $M_{n}\left( \mathbb{C}\right) $ then%
\begin{equation}
\limfunc{tr}\left[ (AB)^{k}\right] \leq \min \left\{ \left\Vert A\right\Vert
^{k}\limfunc{tr}\left( B^{k}\right) ,\left\Vert B\right\Vert ^{k}\limfunc{tr}%
\left( A^{k}\right) \right\}  \label{e.1.12}
\end{equation}%
for any positive integer $k$. Also, if $A,B\in M_{n}\left( \mathbb{C}\right) 
$ then for $r\geq 1$ and $p,q>1$ with $\frac{1}{p}+\frac{1}{q}=1$ we have
the following \textit{Young type inequality} 
\begin{equation}
\limfunc{tr}\left( \left\vert AB^{\ast }\right\vert ^{r}\right) \leq 
\limfunc{tr}\left[ \left( \frac{\left\vert A\right\vert ^{p}}{p}+\frac{%
\left\vert B\right\vert ^{q}}{q}\right) ^{r}\right] .  \label{e.1.13}
\end{equation}%
Ando \cite{A} proved a very strong form of Young's inequality - it was shown
that if $A$ and $B$ are in $M_{n}(\mathbb{C})$, then there is a \textit{%
unitary matrix} $U$ such that%
\begin{equation*}
\left\vert AB^{\ast }\right\vert \leq U\left( \frac{1}{p}\left\vert
A\right\vert ^{p}+\frac{1}{q}\left\vert B\right\vert ^{q}\right) U^{\ast },
\end{equation*}%
where $p,$ $q>1$ with $\frac{1}{p}+\frac{1}{q}=1$, which immediately gives
the trace inequality%
\begin{equation}
\limfunc{tr}\left( \left\vert AB^{\ast }\right\vert \right) \leq \frac{1}{p}%
\limfunc{tr}\left( \left\vert A\right\vert ^{p}\right) +\frac{1}{q}\limfunc{%
tr}\left( \left\vert B\right\vert ^{q}\right) .  \label{e.1.13.a}
\end{equation}%
This inequality can also be obtained from (\ref{e.1.13}) by taking $r=1.$

Another H\"{o}lder type inequality has been proved by Manjegani in \cite{M}
and can be stated as follows:%
\begin{equation}
\limfunc{tr}(AB)\leq \left[ \limfunc{tr}(A^{p})\right] ^{1/p}\left[ \limfunc{%
tr}(B^{q})\right] ^{1/q},  \label{e.1.14}
\end{equation}%
where $p,$ $q>1$ with $\frac{1}{p}+\frac{1}{q}=1$ and $A$ and $B$ are
positive semidefinite matrices.

For the theory of trace functionals and their applications the reader is
referred to \cite{Si}.

For other trace inequalities see \cite{BJL}, \cite{Ch}, \cite{FL}, \cite{Le}%
, \cite{SA0} and \cite{UT}.

In this paper, motivated by the above results, some new inequalities for
Hilbert-Schmidt operators in $\mathcal{B}\left( H\right) $ are provided. The
superadditivity and monotonicity of some associated functionals are
investigated and applications for power series of such operators are given.
Some trace inequalities for matrices are also derived. Examples for the
operator exponential and other similar functions are presented as well.

\section{Some Trace Inequalities Via Hermitian Forms}

Let $P$ a selfadjoint operator with $P\geq 0.$ For $A\in \mathcal{B}%
_{2}\left( H\right) $ and $\left\{ e_{i}\right\} _{i\in I}$ an orthonormal
basis of $H$ we have%
\begin{equation*}
\left\Vert A\right\Vert _{2,P}^{2}:=\limfunc{tr}\left( A^{\ast }PA\right)
=\sum_{i\in I}\left\langle PAe_{i},Ae_{i}\right\rangle \leq \left\Vert
P\right\Vert \sum_{i\in I}\left\Vert Ae_{i}\right\Vert ^{2}=\left\Vert
P\right\Vert \left\Vert A\right\Vert _{2}^{2},
\end{equation*}%
which shows that $\left\langle \cdot ,\cdot \right\rangle _{2,P}$ defined by%
\begin{equation*}
\left\langle A,B\right\rangle _{2,P}:=\limfunc{tr}\left( B^{\ast }PA\right)
=\sum_{i\in I}\left\langle PAe_{i},Be_{i}\right\rangle =\sum_{i\in
I}\left\langle B^{\ast }PAe_{i},e_{i}\right\rangle
\end{equation*}%
is a \textit{nonnegative Hermitian form} on $\mathcal{B}_{2}\left( H\right)
, $ i.e. $\left\langle \cdot ,\cdot \right\rangle _{2,P}$ satisfies the
properties:

\textit{(h)} $\left\langle A,A\right\rangle _{2,P}\geq 0$ for any $A\in 
\mathcal{B}_{2}\left( H\right) ;$

\textit{(hh)} $\left\langle \cdot ,\cdot \right\rangle _{2,P}$ is linear in
the first variable;

\textit{(hhh)} $\left\langle B,A\right\rangle _{2,P}=\overline{\left\langle
A,B\right\rangle }_{2,P}$ for any $A,$ $B\in \mathcal{B}_{2}\left( H\right)
. $

Using the properties of the trace we also have the following representations%
\begin{equation*}
\left\Vert A\right\Vert _{2,P}^{2}:=\limfunc{tr}\left( P\left\vert A^{\ast
}\right\vert ^{2}\right) =\limfunc{tr}\left( \left\vert A^{\ast }\right\vert
^{2}P\right)
\end{equation*}%
and%
\begin{equation*}
\left\langle A,B\right\rangle _{2,P}:=\limfunc{tr}\left( PAB^{\ast }\right) =%
\limfunc{tr}\left( AB^{\ast }P\right) =\limfunc{tr}\left( B^{\ast }PA\right)
\end{equation*}%
for any $A,$ $B\in \mathcal{B}_{2}\left( H\right) .$

We start with the following result:

\begin{theorem}
\label{t.2.1}Let $P$ a selfadjoint operator with $P\geq 0,$ i.e. $%
\left\langle Px,x\right\rangle \geq 0$ for any $x\in H.$

(i) For any $A,$ $B\in \mathcal{B}_{2}\left( H\right) $ we have 
\begin{equation}
\left\vert \limfunc{tr}\left( PAB^{\ast }\right) \right\vert ^{2}\leq 
\limfunc{tr}\left( P\left\vert A^{\ast }\right\vert ^{2}\right) \limfunc{tr}%
\left( P\left\vert B^{\ast }\right\vert ^{2}\right)  \label{e.2.2}
\end{equation}%
and%
\begin{align}
& \left[ \limfunc{tr}\left( P\left\vert A^{\ast }\right\vert ^{2}\right) +2%
\func{Re}\limfunc{tr}\left( PAB^{\ast }\right) +\limfunc{tr}\left(
P\left\vert B^{\ast }\right\vert ^{2}\right) \right] ^{1/2}  \label{e.2.3} \\
& \leq \left[ \limfunc{tr}\left( P\left\vert A^{\ast }\right\vert
^{2}\right) \right] ^{1/2}+\left[ \limfunc{tr}\left( P\left\vert B^{\ast
}\right\vert ^{2}\right) \right] ^{1/2};  \notag
\end{align}

(ii) For any $A,$ $B,$ $C\in \mathcal{B}_{2}\left( H\right) $ we have 
\begin{align}
& \left\vert \limfunc{tr}\left( PAB^{\ast }\right) \limfunc{tr}\left(
P\left\vert C^{\ast }\right\vert ^{2}\right) -\limfunc{tr}\left( PAC^{\ast
}\right) \limfunc{tr}\left( PCB^{\ast }\right) \right\vert ^{2}
\label{e.2.4} \\
& \leq \left[ \limfunc{tr}\left( P\left\vert A^{\ast }\right\vert
^{2}\right) \limfunc{tr}\left( P\left\vert C^{\ast }\right\vert ^{2}\right)
-\left\vert \limfunc{tr}\left( PAC^{\ast }\right) \right\vert ^{2}\right] 
\notag \\
& \times \left[ \limfunc{tr}\left( P\left\vert B^{\ast }\right\vert
^{2}\right) \limfunc{tr}\left( P\left\vert C^{\ast }\right\vert ^{2}\right)
-\left\vert \limfunc{tr}\left( PBC^{\ast }\right) \right\vert ^{2}\right] , 
\notag
\end{align}%
\begin{align}
& \left\vert \limfunc{tr}\left( PAB^{\ast }\right) \right\vert \limfunc{tr}%
\left( P\left\vert C^{\ast }\right\vert ^{2}\right)  \label{e.2.5} \\
& \leq \left\vert \limfunc{tr}\left( PAB^{\ast }\right) \limfunc{tr}\left(
P\left\vert C^{\ast }\right\vert ^{2}\right) -\limfunc{tr}\left( PAC^{\ast
}\right) \limfunc{tr}\left( PCB^{\ast }\right) \right\vert  \notag \\
& +\left\vert \limfunc{tr}\left( PAC^{\ast }\right) \limfunc{tr}\left(
PCB^{\ast }\right) \right\vert  \notag \\
& \leq \left[ \limfunc{tr}\left( P\left\vert A^{\ast }\right\vert
^{2}\right) \right] ^{1/2}\left[ \limfunc{tr}\left( P\left\vert B^{\ast
}\right\vert ^{2}\right) \right] ^{1/2}\limfunc{tr}\left( P\left\vert
C^{\ast }\right\vert ^{2}\right)  \notag
\end{align}%
and%
\begin{align}
& \left\vert \limfunc{tr}\left( PAC^{\ast }\right) \limfunc{tr}\left(
PCB^{\ast }\right) \right\vert  \label{e.2.6} \\
& \leq \frac{1}{2}\left[ \left[ \limfunc{tr}\left( P\left\vert A^{\ast
}\right\vert ^{2}\right) \right] ^{1/2}\left[ \limfunc{tr}\left( P\left\vert
B^{\ast }\right\vert ^{2}\right) \right] ^{1/2}+\left\vert \limfunc{tr}%
\left( PAB^{\ast }\right) \right\vert \right] \limfunc{tr}\left( P\left\vert
C^{\ast }\right\vert ^{2}\right) .  \notag
\end{align}
\end{theorem}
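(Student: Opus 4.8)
The plan is to deduce the entire theorem from the single fact, established immediately above, that
\[
[A,B]:=\langle A,B\rangle _{2,P}=\limfunc{tr}\left( PAB^{\ast }\right)
\]
is a nonnegative Hermitian form on $\mathcal{B}_{2}\left( H\right) $, i.e. it satisfies properties (h), (hh) and (hhh), together with the identifications $[A,A]=\limfunc{tr}\left( P\left\vert A^{\ast }\right\vert ^{2}\right) $, $[A,C]=\limfunc{tr}\left( PAC^{\ast }\right) $ and $[C,B]=\limfunc{tr}\left( PCB^{\ast }\right) $. Under these substitutions, (\ref{e.2.2})--(\ref{e.2.6}) are precisely the Schwarz-type inequalities that hold for an arbitrary nonnegative Hermitian form on a complex vector space, so the proof reduces to the abstract setting.

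For part (i): inequality (\ref{e.2.2}) is the Cauchy--Schwarz inequality $\left\vert [A,B]\right\vert ^{2}\leq \lbrack A,A][B,B]$, which I would prove in the classical manner -- from $0\leq \lbrack A-\lambda B,A-\lambda B]=[A,A]-2\func{Re}\left( \bar{\lambda}[A,B]\right) +\left\vert \lambda \right\vert ^{2}[B,B]$ for all $\lambda \in \mathbb{C}$, optimise over $\lambda $, treating the degenerate case $[B,B]=0$ (where minimising the affine expression in $\lambda $ forces $[A,B]=0$) separately. Inequality (\ref{e.2.3}) is then the triangle inequality $[A+B,A+B]^{1/2}\leq \lbrack A,A]^{1/2}+[B,B]^{1/2}$ for the associated seminorm: expand $[A,A]+2\func{Re}[A,B]+[B,B]$, estimate $2\func{Re}[A,B]\leq 2[A,A]^{1/2}[B,B]^{1/2}$ by (\ref{e.2.2}), recognise the perfect square $\left( [A,A]^{1/2}+[B,B]^{1/2}\right) ^{2}$, and take square roots.

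For part (ii) the cornerstone is (\ref{e.2.4}). I would introduce the auxiliary operators $A^{\prime }:=[C,C]A-[A,C]C$ and $B^{\prime }:=[C,C]B-[B,C]C$ in $\mathcal{B}_{2}\left( H\right) $ and compute, using linearity in the first variable and Hermitian symmetry,
\[
[A^{\prime },B^{\prime }]=[C,C]\left( [C,C][A,B]-[A,C][C,B]\right) ,\qquad [A^{\prime },A^{\prime }]=[C,C]\left( [C,C][A,A]-\left\vert [A,C]\right\vert ^{2}\right) ,
\]
and the analogous formula for $[B^{\prime },B^{\prime }]$. Applying (\ref{e.2.2}) to the pair $A^{\prime },B^{\prime }$ and cancelling the common factor $[C,C]^{2}$ gives (\ref{e.2.4}) when $[C,C]>0$; when $[C,C]=0$, (\ref{e.2.2}) forces $[A,C]=[B,C]=0$ and both sides of (\ref{e.2.4}) vanish. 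I would then record the elementary inequality $\sqrt{p_{1}q_{1}}+\sqrt{p_{2}q_{2}}\leq \sqrt{\left( p_{1}+p_{2}\right) \left( q_{1}+q_{2}\right) }$ for nonnegative reals (equivalent, after squaring, to AM--GM for $p_{1}q_{2}$ and $p_{2}q_{1}$) and apply it with $p_{1}=[A,A][C,C]-\left\vert [A,C]\right\vert ^{2}$, $p_{2}=\left\vert [A,C]\right\vert ^{2}$, $q_{1}=[B,B][C,C]-\left\vert [B,C]\right\vert ^{2}$, $q_{2}=\left\vert [B,C]\right\vert ^{2}$, which yields
\[
\sqrt{\left( [A,A][C,C]-\left\vert [A,C]\right\vert ^{2}\right) \left( [B,B][C,C]-\left\vert [B,C]\right\vert ^{2}\right) }+\left\vert [A,C]\right\vert \left\vert [B,C]\right\vert \leq \lbrack A,A]^{1/2}[B,B]^{1/2}[C,C].
\]

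With these in hand, (\ref{e.2.5}) follows from the reverse triangle inequality $\left\vert [A,B]\right\vert [C,C]=\left\vert [A,B][C,C]\right\vert \leq \left\vert [A,B][C,C]-[A,C][C,B]\right\vert +\left\vert [A,C][C,B]\right\vert $, upon estimating the first summand by (\ref{e.2.4}), noting $\left\vert [A,C][C,B]\right\vert =\left\vert [A,C]\right\vert \left\vert [B,C]\right\vert $ (since $[C,B]=\overline{[B,C]}$), and then invoking the last display. For (\ref{e.2.6}), set $m:=\left\vert [A,C][C,B]\right\vert $, $M:=\left\vert [A,B]\right\vert [C,C]$ and $d:=[A,A]^{1/2}[B,B]^{1/2}[C,C]$; the reverse triangle inequality gives $m\leq \left\vert [A,B][C,C]-[A,C][C,B]\right\vert +M$, while (\ref{e.2.4}) and the last display together give $\left\vert [A,B][C,C]-[A,C][C,B]\right\vert \leq d-m$, and adding the two yields $2m\leq d+M$, which is exactly (\ref{e.2.6}). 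The only point requiring care is this sharp constant $\tfrac{1}{2}$: combining the reverse triangle inequality with (\ref{e.2.4}) and the plain bound $\left\vert [A,B][C,C]-[A,C][C,B]\right\vert \leq d$ produces only $m\leq d+M$, so one must retain the stronger estimate $\left\vert [A,B][C,C]-[A,C][C,B]\right\vert \leq d-m$ coming from the elementary inequality and then close the loop $m\leq \left( d-m\right) +M$.
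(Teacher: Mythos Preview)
Your proof is correct and follows essentially the same route as the paper: Schwarz and the triangle inequality for the nonnegative Hermitian form $\langle\cdot,\cdot\rangle_{2,P}$ give part~(i), and for part~(ii) the paper obtains (\ref{e.2.4}) by observing that $[A,B]_{2,P,C}:=\langle A,B\rangle_{2,P}\Vert C\Vert_{2,P}^{2}-\langle A,C\rangle_{2,P}\langle C,B\rangle_{2,P}$ is again a nonnegative Hermitian form and applying Schwarz to it, which is equivalent to your Gram--Schmidt projection $A'=[C,C]A-[A,C]C$; the remaining inequalities (\ref{e.2.5}) and (\ref{e.2.6}) are then derived exactly as you do, via the elementary bound $(m^{2}-n^{2})(p^{2}-q^{2})\le(mp-nq)^{2}$ (your $\sqrt{p_{1}q_{1}}+\sqrt{p_{2}q_{2}}\le\sqrt{(p_{1}+p_{2})(q_{1}+q_{2})}$ rearranged) together with the triangle inequality for the modulus.
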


\begin{proof}
(i) Making use of the Schwarz inequality for the nonnegative hermitian form $%
\left\langle \cdot ,\cdot \right\rangle _{2,P}$ we have 
\begin{equation*}
\left\vert \left\langle A,B\right\rangle _{2,P}\right\vert ^{2}\leq
\left\langle A,A\right\rangle _{2,P}\left\langle B,B\right\rangle _{2,P}
\end{equation*}%
for any $A,$ $B\in \mathcal{B}_{2}\left( H\right) $ and the inequality (\ref%
{e.2.2}) is proved.

We observe that $\left\Vert \cdot \right\Vert _{2,P}$ is a seminorm on $%
\mathcal{B}_{2}\left( H\right) $ and by the triangle inequality we have 
\begin{equation*}
\left\Vert A+B\right\Vert _{2,P}\leq \left\Vert A\right\Vert
_{2,P}+\left\Vert B\right\Vert _{2,P}
\end{equation*}%
for any $A,$ $B\in \mathcal{B}_{2}\left( H\right) $ and the inequality (\ref%
{e.2.3}) is proved.

(ii) Let $C\in \mathcal{B}_{2}\left( H\right) ,$ $C\neq 0.$ Define the
mapping $\left[ \cdot ,\cdot \right] _{2,P,C}:\mathcal{B}_{2}\left( H\right)
\times \mathcal{B}_{2}\left( H\right) \rightarrow \mathbb{C}$ by 
\begin{equation*}
\left[ A,B\right] _{2,P,C}:=\left\langle A,B\right\rangle _{2,P}\left\Vert
C\right\Vert _{2,P}^{2}-\left\langle A,C\right\rangle _{2,P}\left\langle
C,B\right\rangle _{2,P}.
\end{equation*}%
Observe that $\left[ \cdot ,\cdot \right] _{2,P,C}$ is a nonnegative
Hermitian form on $\mathcal{B}_{2}\left( H\right) $ and by Schwarz
inequality we have%
\begin{align}
& \left\vert \left\langle A,B\right\rangle _{2,P}\left\Vert C\right\Vert
_{2,P}^{2}-\left\langle A,C\right\rangle _{2,P}\left\langle C,B\right\rangle
_{2,P}\right\vert ^{2}  \label{e.2.7} \\
& \leq \left[ \left\Vert A\right\Vert _{2,P}^{2}\left\Vert C\right\Vert
_{2,P}^{2}-\left\vert \left\langle A,C\right\rangle _{2,P}\right\vert ^{2}%
\right] \left[ \left\Vert B\right\Vert _{2,P}^{2}\left\Vert C\right\Vert
_{2,P}^{2}-\left\vert \left\langle B,C\right\rangle _{2,P}\right\vert ^{2}%
\right]  \notag
\end{align}%
for any $A,$ $B\in \mathcal{B}_{2}\left( H\right) ,$ which proves (\ref%
{e.2.4}).

The case $C=0$ is obvious.

Utilising the elementary inequality for real numbers $m,n,p,q$ 
\begin{equation*}
\left( m^{2}-n^{2}\right) \left( p^{2}-q^{2}\right) \leq \left( mp-nq\right)
^{2},
\end{equation*}%
we can easily see that 
\begin{subequations}
\begin{align}
& \left[ \left\Vert A\right\Vert _{2,P}^{2}\left\Vert C\right\Vert
_{2,P}^{2}-\left\vert \left\langle A,C\right\rangle _{2,P}\right\vert ^{2}%
\right] \left[ \left\Vert B\right\Vert _{2,P}^{2}\left\Vert C\right\Vert
_{2,P}^{2}-\left\vert \left\langle B,C\right\rangle _{2,P}\right\vert ^{2}%
\right]  \label{e.2.8} \\
& \leq \left( \left\Vert A\right\Vert _{2,P}\left\Vert B\right\Vert
_{2,P}\left\Vert C\right\Vert _{2,P}^{2}-\left\vert \left\langle
A,C\right\rangle _{2,P}\right\vert \left\vert \left\langle B,C\right\rangle
_{2,P}\right\vert \right) ^{2}  \notag
\end{align}%
for any $A,$ $B,$ $C\in \mathcal{B}_{2}\left( H\right) .$

Since, by Schwarz's inequality we have 
\end{subequations}
\begin{equation*}
\left\Vert A\right\Vert _{2,P}\left\Vert C\right\Vert _{2,P}\geq \left\vert
\left\langle A,C\right\rangle _{2,P}\right\vert
\end{equation*}%
and%
\begin{equation*}
\left\Vert B\right\Vert _{2,P}\left\Vert C\right\Vert _{2,P}\geq \left\vert
\left\langle B,C\right\rangle _{2,P}\right\vert ,
\end{equation*}%
then by multiplying these inequalities we have%
\begin{equation*}
\left\Vert A\right\Vert _{2,P}\left\Vert B\right\Vert _{2,P}\left\Vert
C\right\Vert _{2,P}^{2}\geq \left\vert \left\langle A,C\right\rangle
_{2,P}\right\vert \left\vert \left\langle B,C\right\rangle _{2,P}\right\vert
\end{equation*}%
for any $A,$ $B,$ $C\in \mathcal{B}_{2}\left( H\right) .$

Utilizing the inequalities (\ref{e.2.7}) and (\ref{e.2.8}) and taking the
square root we get%
\begin{align}
& \left\vert \left\langle A,B\right\rangle _{2,P}\left\Vert C\right\Vert
_{2,P}^{2}-\left\langle A,C\right\rangle _{2,P}\left\langle C,B\right\rangle
_{2,P}\right\vert  \label{e.2.9} \\
& \leq \left\Vert A\right\Vert _{2,P}\left\Vert B\right\Vert
_{2,P}\left\Vert C\right\Vert _{2,P}^{2}-\left\vert \left\langle
A,C\right\rangle _{2,P}\right\vert \left\vert \left\langle B,C\right\rangle
_{2,P}\right\vert  \notag
\end{align}%
for any $A,$ $B,$ $C\in \mathcal{B}_{2}\left( H\right) ,$ which proves the
second inequality in (\ref{e.2.5}).

The first inequality is obvious by the modulus properties.

By the triangle inequality for modulus we also have%
\begin{align}
& \left\vert \left\langle A,C\right\rangle _{2,P}\left\langle
C,B\right\rangle _{2,P}\right\vert -\left\vert \left\langle A,B\right\rangle
_{2,P}\right\vert \left\Vert C\right\Vert _{2,P}^{2}  \label{e.2.10} \\
& \leq \left\vert \left\langle A,B\right\rangle _{2,P}\left\Vert
C\right\Vert _{2,P}^{2}-\left\langle A,C\right\rangle _{2,P}\left\langle
C,B\right\rangle _{2,P}\right\vert  \notag
\end{align}%
for any $A,$ $B,$ $C\in \mathcal{B}_{2}\left( H\right) .$

On making use of (\ref{e.2.9}) and (\ref{e.2.10}) we have%
\begin{align*}
& \left\vert \left\langle A,C\right\rangle _{2,P}\left\langle
C,B\right\rangle _{2,P}\right\vert -\left\vert \left\langle A,B\right\rangle
_{2,P}\right\vert \left\Vert C\right\Vert _{2,P}^{2} \\
& \leq \left\Vert A\right\Vert _{2,P}\left\Vert B\right\Vert
_{2,P}\left\Vert C\right\Vert _{2,P}^{2}-\left\vert \left\langle
A,C\right\rangle _{2,P}\right\vert \left\vert \left\langle B,C\right\rangle
_{2,P}\right\vert ,
\end{align*}%
which is equivalent to the desired inequality (\ref{e.2.6}).
\end{proof}

\begin{remark}
\label{r.2.1}By the triangle inequality for the hermitian form $\left[ \cdot
,\cdot \right] _{2,P,C}:\mathcal{B}_{2}\left( H\right) \times \mathcal{B}%
_{2}\left( H\right) \rightarrow \mathbb{C}$, 
\begin{equation*}
\left[ A,B\right] _{2,P,C}:=\left\langle A,B\right\rangle _{2,P}\left\Vert
C\right\Vert _{2,P}^{2}-\left\langle A,C\right\rangle _{2,P}\left\langle
C,B\right\rangle _{2,P}
\end{equation*}%
we have%
\begin{align*}
& \left( \left\Vert A+B\right\Vert _{2,P}^{2}\left\Vert C\right\Vert
_{2,P}^{2}-\left\vert \left\langle A+B,C\right\rangle _{2,P}\right\vert
^{2}\right) ^{1/2} \\
& \leq \left( \left\Vert A\right\Vert _{2,P}^{2}\left\Vert C\right\Vert
_{2,P}^{2}-\left\vert \left\langle A,C\right\rangle _{2,P}\right\vert
^{2}\right) ^{1/2}+\left( \left\Vert B\right\Vert _{2,P}^{2}\left\Vert
C\right\Vert _{2,P}^{2}-\left\vert \left\langle B,C\right\rangle
_{2,P}\right\vert ^{2}\right) ^{1/2},
\end{align*}%
which can be written as%
\begin{align}
& \left( \limfunc{tr}\left[ P\left\vert \left( A+B\right) ^{\ast
}\right\vert ^{2}\right] \limfunc{tr}\left( P\left\vert C^{\ast }\right\vert
^{2}\right) -\left\vert \limfunc{tr}\left[ P\left( A+B\right) C^{\ast }%
\right] \right\vert ^{2}\right) ^{1/2}  \label{e.2.11} \\
& \leq \left( \limfunc{tr}\left( P\left\vert A^{\ast }\right\vert
^{2}\right) \limfunc{tr}\left( P\left\vert C^{\ast }\right\vert ^{2}\right)
-\left\vert \limfunc{tr}\left( PAC^{\ast }\right) \right\vert ^{2}\right)
^{1/2}  \notag \\
& +\left( \limfunc{tr}\left( P\left\vert B^{\ast }\right\vert ^{2}\right) 
\limfunc{tr}\left( P\left\vert C^{\ast }\right\vert ^{2}\right) -\left\vert 
\limfunc{tr}\left( PBC^{\ast }\right) \right\vert ^{2}\right) ^{1/2}  \notag
\end{align}%
for any $A,$ $B,$ $C\in \mathcal{B}_{2}\left( H\right) .$
\end{remark}

\begin{remark}
\label{r.2.2}If we take $B=\lambda C$ in (\ref{e.2.11}), then we get%
\begin{align}
0& \leq \limfunc{tr}\left[ P\left\vert \left( A+\lambda C\right) ^{\ast
}\right\vert ^{2}\right] \limfunc{tr}\left( P\left\vert C^{\ast }\right\vert
^{2}\right) -\left\vert \limfunc{tr}\left[ P\left( A+\lambda C\right)
C^{\ast }\right] \right\vert ^{2}  \label{e.2.12} \\
& \leq \limfunc{tr}\left( P\left\vert A^{\ast }\right\vert ^{2}\right) 
\limfunc{tr}\left( P\left\vert C^{\ast }\right\vert ^{2}\right) -\left\vert 
\limfunc{tr}\left( C^{\ast }PA\right) \right\vert ^{2}  \notag
\end{align}%
for any $\lambda \in \mathbb{C}$ and $A,$ $C\in \mathcal{B}_{2}\left(
H\right) .$

Therefore, we have the bound%
\begin{align}
& \sup_{\lambda \in \mathbb{C}}\left\{ \limfunc{tr}\left[ P\left\vert \left(
A+\lambda C\right) ^{\ast }\right\vert ^{2}\right] \limfunc{tr}\left(
P\left\vert C^{\ast }\right\vert ^{2}\right) -\left\vert \limfunc{tr}\left[
P\left( A+\lambda C\right) C^{\ast }\right] \right\vert ^{2}\right\}
\label{e.2.13} \\
& =\limfunc{tr}\left( P\left\vert A^{\ast }\right\vert ^{2}\right) \limfunc{%
tr}\left( P\left\vert C^{\ast }\right\vert ^{2}\right) -\left\vert \limfunc{%
tr}\left( PAC^{\ast }\right) \right\vert ^{2}.  \notag
\end{align}%
We also have the inequalities%
\begin{align}
0& \leq \limfunc{tr}\left[ P\left\vert \left( A\pm C\right) ^{\ast
}\right\vert ^{2}\right] \limfunc{tr}\left( P\left\vert C^{\ast }\right\vert
^{2}\right) -\left\vert \limfunc{tr}\left[ P\left( A\pm C\right) C^{\ast }%
\right] \right\vert ^{2}  \label{e.2.14} \\
& \leq \limfunc{tr}\left( P\left\vert A^{\ast }\right\vert ^{2}\right) 
\limfunc{tr}\left( P\left\vert C^{\ast }\right\vert ^{2}\right) -\left\vert 
\limfunc{tr}\left( PAC^{\ast }\right) \right\vert ^{2}  \notag
\end{align}%
for any $A,$ $C\in \mathcal{B}_{2}\left( H\right) .$
\end{remark}

\begin{remark}
\label{r.2.3}We observe that, by replacing $A^{\ast }$ with $A,$ $B^{\ast }$
with $B$ etc...above, we can get the dual inequalities, like, for instance%
\begin{align}
& \left\vert \limfunc{tr}\left( PA^{\ast }C\right) \limfunc{tr}\left(
PC^{\ast }B\right) \right\vert  \label{e.2.15} \\
& \leq \frac{1}{2}\left[ \left[ \limfunc{tr}\left( P\left\vert A\right\vert
^{2}\right) \right] ^{1/2}\left[ \limfunc{tr}\left( P\left\vert B\right\vert
^{2}\right) \right] ^{1/2}+\left\vert \limfunc{tr}\left( PA^{\ast }B\right)
\right\vert \right] \limfunc{tr}\left( P\left\vert C\right\vert ^{2}\right) ,
\notag
\end{align}%
that holds for any $A,$ $B,$ $C\in \mathcal{B}_{2}\left( H\right) .$

Since 
\begin{equation*}
\left\vert \limfunc{tr}\left( PA^{\ast }C\right) \right\vert =\left\vert 
\overline{\limfunc{tr}\left( PA^{\ast }C\right) }\right\vert =\left\vert 
\limfunc{tr}\left[ \left( PA^{\ast }C\right) ^{\ast }\right] \right\vert
=\left\vert \limfunc{tr}\left( C^{\ast }AP\right) \right\vert =\left\vert 
\limfunc{tr}\left( PC^{\ast }A\right) \right\vert ,
\end{equation*}%
\begin{equation*}
\left\vert \limfunc{tr}\left( PC^{\ast }B\right) \right\vert =\left\vert 
\limfunc{tr}\left( PB^{\ast }C\right) \right\vert
\end{equation*}%
and%
\begin{equation*}
\left\vert \limfunc{tr}\left( PA^{\ast }B\right) \right\vert =\left\vert 
\limfunc{tr}\left( PB^{\ast }A\right) \right\vert
\end{equation*}%
then the inequality (\ref{e.2.15}) can be also written as%
\begin{align}
& \left\vert \limfunc{tr}\left( PC^{\ast }A\right) \limfunc{tr}\left(
PB^{\ast }C\right) \right\vert  \label{e.2.16} \\
& \leq \frac{1}{2}\left[ \left[ \limfunc{tr}\left( P\left\vert A\right\vert
^{2}\right) \right] ^{1/2}\left[ \limfunc{tr}\left( P\left\vert B\right\vert
^{2}\right) \right] ^{1/2}+\left\vert \limfunc{tr}\left( PB^{\ast }A\right)
\right\vert \right] \limfunc{tr}\left( P\left\vert C\right\vert ^{2}\right) ,
\notag
\end{align}%
that holds for any $A,B,C\in \mathcal{B}_{2}\left( H\right) .$

If we take in (\ref{e.2.16}) $B=A^{\ast }$ then we get the following
inequality 
\begin{align}
& \left\vert \limfunc{tr}\left( PC^{\ast }A\right) \limfunc{tr}\left(
PAC\right) \right\vert  \label{e.2.17} \\
& \leq \frac{1}{2}\left[ \left[ \limfunc{tr}\left( P\left\vert A\right\vert
^{2}\right) \right] ^{1/2}\left[ \limfunc{tr}\left( P\left\vert A^{\ast
}\right\vert ^{2}\right) \right] ^{1/2}+\left\vert \limfunc{tr}\left(
PA^{2}\right) \right\vert \right] \limfunc{tr}\left( P\left\vert
C\right\vert ^{2}\right) ,  \notag
\end{align}%
for any $A,B,C\in \mathcal{B}_{2}\left( H\right) .$

If $A$ is a normal operator, i.e. $\left\vert A\right\vert ^{2}=\left\vert
A^{\ast }\right\vert ^{2}$ then we have from (\ref{e.2.17}) that%
\begin{equation}
\left\vert \limfunc{tr}\left( PC^{\ast }A\right) \limfunc{tr}\left(
PAC\right) \right\vert \leq \frac{1}{2}\left[ \limfunc{tr}\left( P\left\vert
A\right\vert ^{2}\right) +\left\vert \limfunc{tr}\left( PA^{2}\right)
\right\vert \right] \limfunc{tr}\left( P\left\vert C\right\vert ^{2}\right) ,
\label{e.2.17.a}
\end{equation}

In particular, if $C$ is selfadjoint and $C\in \mathcal{B}_{2}\left(
H\right) ,$ then%
\begin{equation}
\left\vert \limfunc{tr}\left( PAC\right) \right\vert ^{2}\leq \frac{1}{2}%
\left[ \limfunc{tr}\left( P\left\vert A\right\vert ^{2}\right) +\left\vert 
\limfunc{tr}\left( PA^{2}\right) \right\vert \right] \limfunc{tr}\left(
PC^{2}\right) ,  \label{e.2.18}
\end{equation}%
for any $A\in \mathcal{B}_{2}\left( H\right) $ a normal operator.

We notice that (\ref{e.2.18}) is a trace operator version of \textit{de
Bruijn inequality} obtained in 1960 in \cite{BR}, which gives the following
refinement of the Cauchy-Bunyakovsky-Schwarz inequality:%
\begin{equation}
\left\vert \sum_{i=1}^{n}a_{i}z_{i}\right\vert ^{2}\leq \frac{1}{2}%
\sum_{i=1}^{n}a_{i}^{2}\left[ \sum_{i=1}^{n}\left\vert z_{i}\right\vert
^{2}+\left\vert \sum_{i=1}^{n}z_{i}^{2}\right\vert \right] ,  \label{dB}
\end{equation}%
provided that $a_{i}$ are real numbers while $z_{i}$ are complex for each $%
i\in \left\{ 1,...,n\right\} .$
\end{remark}

We notice that, if $P\in \mathcal{B}_{1}\left( H\right) ,$ $P\geq 0$ and $A,$
$B\in \mathcal{B}\left( H\right) ,$ then%
\begin{equation*}
\left\langle A,B\right\rangle _{2,P}:=\limfunc{tr}\left( PAB^{\ast }\right) =%
\limfunc{tr}\left( AB^{\ast }P\right) =\limfunc{tr}\left( B^{\ast }PA\right)
\end{equation*}%
is a \textit{nonnegative Hermitian form} on $\mathcal{B}\left( H\right) $
and all the inequalities above will hold for $A,$ $B,$ $C\in \mathcal{B}%
\left( H\right) .$ The details are left to the reader.

\section{Some Functional Properties}

We consider now the convex cone $\mathcal{B}_{+}\left( H\right) $ of
nonnegative operators on the complex Hilbert space $H$ and, for $A,$ $B\in 
\mathcal{B}_{2}\left( H\right) $ define the functional $\sigma _{A,B}:%
\mathcal{B}_{+}\left( H\right) \rightarrow \lbrack 0,\infty )$ by%
\begin{equation}
\sigma _{A,B}\left( P\right) :=\left[ \limfunc{tr}\left( P\left\vert
A\right\vert ^{2}\right) \right] ^{1/2}\left[ \limfunc{tr}\left( P\left\vert
B\right\vert ^{2}\right) \right] ^{1/2}-\left\vert \limfunc{tr}\left(
PA^{\ast }B\right) \right\vert \left( \geq 0\right) .  \label{e.2.21}
\end{equation}%
The following theorem collects some fundamental properties of this
functional.

\begin{theorem}
\label{t.2.2}Let $A,B\in \mathcal{B}_{2}\left( H\right) .$

(i) For any $P,$ $Q\in \mathcal{B}_{+}\left( H\right) $ we have%
\begin{equation}
\sigma _{A,B}\left( P+Q\right) \geq \sigma _{A,B}\left( P\right) +\sigma
_{A,B}\left( Q\right) \left( \geq 0\right) ,  \label{e.2.22}
\end{equation}%
namely, $\sigma _{A,B}$ is a superadditive functional on $\mathcal{B}%
_{+}\left( H\right) ;$

(ii) For any $P,$ $Q\in \mathcal{B}_{+}\left( H\right) $ with $P\geq Q$ we
have%
\begin{equation}
\sigma _{A,B}\left( P\right) \geq \sigma _{A,B}\left( Q\right) \left( \geq
0\right) ,  \label{e.2.23}
\end{equation}%
namely, $\sigma _{A,B}$ is a monotonic nondecreasing functional on $\mathcal{%
B}_{+}\left( H\right) ;$

(iii) If $P,$ $Q\in \mathcal{B}_{+}\left( H\right) $ and there exist the
constants $M>m>0$ such that $MQ\geq $ $P\geq mQ$ then%
\begin{equation}
M\sigma _{A,B}\left( Q\right) \geq \sigma _{A,B}\left( P\right) \geq m\sigma
_{A,B}\left( Q\right) \left( \geq 0\right) .  \label{e.2.24}
\end{equation}
\end{theorem}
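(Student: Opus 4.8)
The plan is to recognise $\sigma _{A,B}\left( P\right) $ as the Cauchy--Schwarz deficit of the nonnegative Hermitian form $\left\langle \cdot ,\cdot \right\rangle _{2,P}$ introduced in Section 2, and then to obtain (ii) and (iii) from (i) by elementary positivity and homogeneity arguments.

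First I would reformulate the functional. For fixed $P\in \mathcal{B}_{+}\left( H\right) $, since $A,B\in \mathcal{B}_{2}\left( H\right) $ we also have $A^{\ast },B^{\ast }\in \mathcal{B}_{2}\left( H\right) $, and by Theorem \ref{t.1.2} the products $\left\vert A\right\vert ^{2},\left\vert B\right\vert ^{2},A^{\ast }B$ lie in $\mathcal{B}_{1}\left( H\right) $, so all the traces occurring in $\sigma _{A,B}\left( P\right) $ are finite. Moreover $\left\Vert A^{\ast }\right\Vert _{2,P}^{2}=\limfunc{tr}\left( P\left\vert A\right\vert ^{2}\right) $, $\left\Vert B^{\ast }\right\Vert _{2,P}^{2}=\limfunc{tr}\left( P\left\vert B\right\vert ^{2}\right) $ and $\left\langle A^{\ast },B^{\ast }\right\rangle _{2,P}=\limfunc{tr}\left( PA^{\ast }B\right) $, so that
\begin{equation*}
\sigma _{A,B}\left( P\right) =\left\Vert A^{\ast }\right\Vert _{2,P}\left\Vert B^{\ast }\right\Vert _{2,P}-\left\vert \left\langle A^{\ast },B^{\ast }\right\rangle _{2,P}\right\vert .
\end{equation*}
The Schwarz inequality for the nonnegative Hermitian form $\left\langle \cdot ,\cdot \right\rangle _{2,P}$ then yields $\sigma _{A,B}\left( P\right) \geq 0$, which also takes care of the parenthetical $\left( \geq 0\right) $ claims throughout.

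For (i), I would set $a_{1}=\limfunc{tr}\left( P\left\vert A\right\vert ^{2}\right) $, $a_{2}=\limfunc{tr}\left( Q\left\vert A\right\vert ^{2}\right) $, $b_{1}=\limfunc{tr}\left( P\left\vert B\right\vert ^{2}\right) $, $b_{2}=\limfunc{tr}\left( Q\left\vert B\right\vert ^{2}\right) $, all nonnegative, and use linearity of the trace to write $\limfunc{tr}\left( \left( P+Q\right) \left\vert A\right\vert ^{2}\right) =a_{1}+a_{2}$, $\limfunc{tr}\left( \left( P+Q\right) \left\vert B\right\vert ^{2}\right) =b_{1}+b_{2}$ and $\limfunc{tr}\left( \left( P+Q\right) A^{\ast }B\right) =\limfunc{tr}\left( PA^{\ast }B\right) +\limfunc{tr}\left( QA^{\ast }B\right) $. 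Applying the triangle inequality for the modulus to the cross term, the claim (\ref{e.2.22}) reduces to the scalar inequality $\sqrt{\left( a_{1}+a_{2}\right) \left( b_{1}+b_{2}\right) }\geq \sqrt{a_{1}b_{1}}+\sqrt{a_{2}b_{2}}$, which on squaring both (nonnegative) sides is just $a_{1}b_{2}+a_{2}b_{1}\geq 2\sqrt{\left( a_{1}b_{2}\right) \left( a_{2}b_{1}\right) }$, i.e.\ AM--GM. This superadditivity of the geometric mean is the only computational point, and it is elementary.

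Finally, (ii) and (iii) follow formally. The functional is positively homogeneous, $\sigma _{A,B}\left( \lambda P\right) =\lambda \sigma _{A,B}\left( P\right) $ for $\lambda \geq 0$, straight from the definition. For (ii), write $P=Q+\left( P-Q\right) $ with $P-Q\in \mathcal{B}_{+}\left( H\right) $ and invoke (i) together with $\sigma _{A,B}\left( P-Q\right) \geq 0$. For (iii), combine (ii) with homogeneity: $P\geq mQ$ gives $\sigma _{A,B}\left( P\right) \geq \sigma _{A,B}\left( mQ\right) =m\sigma _{A,B}\left( Q\right) $, and $MQ\geq P$ gives $M\sigma _{A,B}\left( Q\right) =\sigma _{A,B}\left( MQ\right) \geq \sigma _{A,B}\left( P\right) $. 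I do not expect a real obstacle here; the only thing requiring a little care is making sure every operator appearing under a trace is genuinely trace class so that the linearity and cyclicity manipulations are legitimate, which is covered by the ideal property in Theorem \ref{t.1.2}.
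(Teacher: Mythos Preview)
Your proof is correct and follows essentially the same route as the paper: the key step for (i) is the scalar inequality $\sqrt{(a_{1}+a_{2})(b_{1}+b_{2})}\geq \sqrt{a_{1}b_{1}}+\sqrt{a_{2}b_{2}}$ combined with the triangle inequality for the modulus, which is exactly what the paper uses (phrased there as $(a^{2}+b^{2})^{1/2}(c^{2}+d^{2})^{1/2}\geq ac+bd$), and (ii), (iii) are deduced from (i) via superadditivity, nonnegativity and homogeneity in the same way. Your extra remarks on trace-class membership and the identification with $\left\langle \cdot ,\cdot \right\rangle _{2,P}$ are welcome clarifications but do not change the argument.
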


\begin{proof}
(i) Let $P,$ $Q\in \mathcal{B}_{+}\left( H\right) $. On utilizing the
elementary inequality%
\begin{equation*}
\left( a^{2}+b^{2}\right) ^{1/2}\left( c^{2}+d^{2}\right) ^{1/2}\geq ac+bd,%
\text{ }a,b,c,d\geq 0
\end{equation*}%
and the triangle inequality for the modulus, we have%
\begin{align*}
& \sigma _{A,B}\left( P+Q\right) \\
& =\left[ \limfunc{tr}\left( \left( P+Q\right) \left\vert A\right\vert
^{2}\right) \right] ^{1/2}\left[ \limfunc{tr}\left( \left( P+Q\right)
\left\vert B\right\vert ^{2}\right) \right] ^{1/2}-\left\vert \limfunc{tr}%
\left( \left( P+Q\right) A^{\ast }B\right) \right\vert \\
& =\left[ \limfunc{tr}\left( P\left\vert A\right\vert ^{2}+Q\left\vert
A\right\vert ^{2}\right) \right] ^{1/2}\left[ \limfunc{tr}\left( P\left\vert
B\right\vert ^{2}+Q\left\vert B\right\vert ^{2}\right) \right] ^{1/2} \\
& -\left\vert \limfunc{tr}\left( PA^{\ast }B+QA^{\ast }B\right) \right\vert
\\
& =\left[ \limfunc{tr}\left( P\left\vert A\right\vert ^{2}\right) +\limfunc{%
tr}\left( Q\left\vert A\right\vert ^{2}\right) \right] ^{1/2}\left[ \limfunc{%
tr}\left( P\left\vert B\right\vert ^{2}\right) +\limfunc{tr}\left(
Q\left\vert B\right\vert ^{2}\right) \right] ^{1/2} \\
& -\left\vert \limfunc{tr}\left( PA^{\ast }B\right) +\limfunc{tr}\left(
QA^{\ast }B\right) \right\vert \\
& \geq \left[ \limfunc{tr}\left( P\left\vert A\right\vert ^{2}\right) \right]
^{1/2}\left[ \limfunc{tr}\left( P\left\vert B\right\vert ^{2}\right) \right]
^{1/2}+\left[ \limfunc{tr}\left( Q\left\vert A\right\vert ^{2}\right) \right]
^{1/2}\left[ \limfunc{tr}\left( Q\left\vert B\right\vert ^{2}\right) \right]
^{1/2} \\
& -\left\vert \limfunc{tr}\left( PA^{\ast }B\right) \right\vert -\left\vert 
\limfunc{tr}\left( QA^{\ast }B\right) \right\vert \\
& =\sigma _{A,B}\left( P\right) +\sigma _{A,B}\left( Q\right)
\end{align*}%
and the inequality (\ref{e.2.22}) is proved.

(ii) Let $P,$ $Q\in \mathcal{B}_{+}\left( H\right) $ with $P\geq Q.$
Utilising the superadditivity property we have%
\begin{eqnarray*}
\sigma _{A,B}\left( P\right) &=&\sigma _{A,B}\left( \left( P-Q\right)
+Q\right) \geq \sigma _{A,B}\left( P-Q\right) +\sigma _{A,B}\left( Q\right)
\\
&\geq &\sigma _{A,B}\left( Q\right)
\end{eqnarray*}%
and the inequality (\ref{e.2.23}) is obtained.

(iii) From the monotonicity property we have 
\begin{equation*}
\sigma _{A,B}\left( P\right) \geq \sigma _{A,B}\left( mQ\right) =m\sigma
_{A,B}\left( Q\right)
\end{equation*}%
and a similar inequality for $M,$ which prove the desired result (\ref%
{e.2.24}).
\end{proof}

\begin{corollary}
\label{c.2.2}Let $A,$ $B\in \mathcal{B}_{2}\left( H\right) $ and $P\in 
\mathcal{B}\left( H\right) $ such that there exist the constants $M>m>0$
with $M1_{H}\geq $ $P\geq m1_{H}.$ Then we have%
\begin{align}
& M\left( \left[ \limfunc{tr}\left( \left\vert A\right\vert ^{2}\right) %
\right] ^{1/2}\left[ \limfunc{tr}\left( \left\vert B\right\vert ^{2}\right) %
\right] ^{1/2}-\left\vert \limfunc{tr}\left( A^{\ast }B\right) \right\vert
\right)  \label{e.2.25} \\
& \geq \left[ \limfunc{tr}\left( P\left\vert A\right\vert ^{2}\right) \right]
^{1/2}\left[ \limfunc{tr}\left( P\left\vert B\right\vert ^{2}\right) \right]
^{1/2}-\left\vert \limfunc{tr}\left( PA^{\ast }B\right) \right\vert  \notag
\\
& \geq m\left( \left[ \limfunc{tr}\left( \left\vert A\right\vert ^{2}\right) %
\right] ^{1/2}\left[ \limfunc{tr}\left( \left\vert B\right\vert ^{2}\right) %
\right] ^{1/2}-\left\vert \limfunc{tr}\left( A^{\ast }B\right) \right\vert
\right) .  \notag
\end{align}
\end{corollary}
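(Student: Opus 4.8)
The plan is to specialise Theorem~\ref{t.2.2}(iii) to the particular choice $Q=1_{H}$. First I would observe that the hypotheses of the corollary already place $P$ in the cone $\mathcal{B}_{+}\left( H\right) $: from $P\geq m1_{H}$ with $m>0$ we get $\left\langle Px,x\right\rangle \geq m\left\Vert x\right\Vert ^{2}\geq 0$ for every $x\in H$, hence $P\geq 0$. Likewise $1_{H}\in \mathcal{B}_{+}\left( H\right) $. Therefore both $P$ and $Q:=1_{H}$ are admissible arguments of the functional $\sigma _{A,B}$, and the quantities $\sigma _{A,B}\left( P\right) $ and $\sigma _{A,B}\left( 1_{H}\right) $ are finite: since $A,B\in \mathcal{B}_{2}\left( H\right) $ we have $\left\vert A\right\vert ^{2},\left\vert B\right\vert ^{2}\in \mathcal{B}_{1}\left( H\right) $ and $A^{\ast }B\in \mathcal{B}_{1}\left( H\right) $, so, $\mathcal{B}_{1}\left( H\right) $ being an ideal, all the traces occurring in (\ref{e.2.21}) are well defined when evaluated at $P$ or at $1_{H}$.

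Next I would record the evaluation of the functional at the identity operator, namely
\begin{equation*}
\sigma _{A,B}\left( 1_{H}\right) =\left[ \limfunc{tr}\left( \left\vert A\right\vert ^{2}\right) \right] ^{1/2}\left[ \limfunc{tr}\left( \left\vert B\right\vert ^{2}\right) \right] ^{1/2}-\left\vert \limfunc{tr}\left( A^{\ast }B\right) \right\vert ,
\end{equation*}
which is just (\ref{e.2.21}) with $P$ replaced by $1_{H}$.

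Finally, the operator inequality $M1_{H}\geq P\geq m1_{H}$ is precisely the condition $MQ\geq P\geq mQ$ of Theorem~\ref{t.2.2}(iii) for $Q=1_{H}$. Invoking that part of the theorem gives
\begin{equation*}
M\sigma _{A,B}\left( 1_{H}\right) \geq \sigma _{A,B}\left( P\right) \geq m\sigma _{A,B}\left( 1_{H}\right) \left( \geq 0\right) ,
\end{equation*}
and substituting the expression for $\sigma _{A,B}\left( 1_{H}\right) $ displayed above yields exactly (\ref{e.2.25}).

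There is essentially no genuine obstacle in this argument: the statement is obtained by pure specialisation of the monotonicity-with-bounds property. The one point that deserves a line of comment is that $1_{H}$ is a legitimate argument of $\sigma _{A,B}$ even when $H$ is infinite-dimensional and $1_{H}\notin \mathcal{B}_{2}\left( H\right) $; this causes no difficulty because $\sigma _{A,B}$ was defined on the whole cone $\mathcal{B}_{+}\left( H\right) \subseteq \mathcal{B}\left( H\right) $, and the finiteness (and nonnegativity) of $\sigma _{A,B}\left( 1_{H}\right) $ relies only on $A$ and $B$ being Hilbert-Schmidt.
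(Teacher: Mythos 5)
Your proof is correct and is exactly the argument the paper intends: Corollary~\ref{c.2.2} is stated as an immediate specialisation of Theorem~\ref{t.2.2}(iii) with $Q=1_{H}$, and your verification that $P\geq 0$, that $\sigma _{A,B}\left( 1_{H}\right) $ is well defined and finite for $A,B\in \mathcal{B}_{2}\left( H\right) $, and the substitution of $\sigma _{A,B}\left( 1_{H}\right) $ into (\ref{e.2.24}) is precisely what is needed.
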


Let $P=\left\vert V\right\vert ^{2}$ with $V\in \mathcal{B}\left( H\right) .$
If $A,$ $B\in \mathcal{B}_{2}\left( H\right) $ then%
\begin{align*}
\sigma _{A,B}\left( \left\vert V\right\vert ^{2}\right) & =\left[ \limfunc{tr%
}\left( \left\vert V\right\vert ^{2}\left\vert A\right\vert ^{2}\right) %
\right] ^{1/2}\left[ \limfunc{tr}\left( \left\vert V\right\vert
^{2}\left\vert B\right\vert ^{2}\right) \right] ^{1/2}-\left\vert \limfunc{tr%
}\left( \left\vert V\right\vert ^{2}A^{\ast }B\right) \right\vert \\
& =\left[ \limfunc{tr}\left( V^{\ast }VA^{\ast }A\right) \right] ^{1/2}\left[
\limfunc{tr}\left( V^{\ast }VB^{\ast }B\right) \right] ^{1/2}-\left\vert 
\limfunc{tr}\left( V^{\ast }VA^{\ast }B\right) \right\vert \\
& =\left[ \limfunc{tr}\left( VA^{\ast }AV^{\ast }\right) \right] ^{1/2}\left[
\limfunc{tr}\left( VB^{\ast }BV^{\ast }\right) \right] ^{1/2}-\left\vert 
\limfunc{tr}\left( VA^{\ast }BV^{\ast }\right) \right\vert \\
& =\left[ \limfunc{tr}\left( \left( AV^{\ast }\right) ^{\ast }AV^{\ast
}\right) \right] ^{1/2}\left[ \limfunc{tr}\left( \left( BV^{\ast }\right)
^{\ast }BV^{\ast }\right) \right] ^{1/2}-\left\vert \limfunc{tr}\left(
\left( AV^{\ast }\right) ^{\ast }BV^{\ast }\right) \right\vert \\
& =\left[ \limfunc{tr}\left( \left\vert AV^{\ast }\right\vert ^{2}\right) %
\right] ^{1/2}\left[ \limfunc{tr}\left( \left\vert BV^{\ast }\right\vert
^{2}\right) \right] ^{1/2}-\left\vert \limfunc{tr}\left( \left( AV^{\ast
}\right) ^{\ast }BV^{\ast }\right) \right\vert .
\end{align*}

On utilizing the property (\ref{e.2.22}) for $P=\left\vert V\right\vert
^{2}, $ $Q=\left\vert U\right\vert ^{2}$ with $V,$ $U\in \mathcal{B}\left(
H\right) ,$ then we have for any $A,$ $B\in \mathcal{B}_{2}\left( H\right) $
the following trace inequality 
\begin{align}
& \left[ \limfunc{tr}\left( \left\vert AV^{\ast }\right\vert ^{2}+\left\vert
AU^{\ast }\right\vert ^{2}\right) \right] ^{1/2}\left[ \limfunc{tr}\left(
\left\vert BV^{\ast }\right\vert ^{2}+\left\vert BU^{\ast }\right\vert
^{2}\right) \right] ^{1/2}  \label{e.2.26} \\
& -\left\vert \limfunc{tr}\left( \left( AV^{\ast }\right) ^{\ast }BV^{\ast
}+\left( AU^{\ast }\right) ^{\ast }BU^{\ast }\right) \right\vert  \notag \\
& \geq \left[ \limfunc{tr}\left( \left\vert AV^{\ast }\right\vert
^{2}\right) \right] ^{1/2}\left[ \limfunc{tr}\left( \left\vert BV^{\ast
}\right\vert ^{2}\right) \right] ^{1/2}-\left\vert \limfunc{tr}\left( \left(
AV^{\ast }\right) ^{\ast }BV^{\ast }\right) \right\vert  \notag \\
& +\left[ \limfunc{tr}\left( \left\vert AU^{\ast }\right\vert ^{2}\right) %
\right] ^{1/2}\left[ \limfunc{tr}\left( \left\vert BU^{\ast }\right\vert
^{2}\right) \right] ^{1/2}-\left\vert \limfunc{tr}\left( \left( AU^{\ast
}\right) ^{\ast }BU^{\ast }\right) \right\vert \left( \geq 0\right) .  \notag
\end{align}

Also, if $\left\vert V\right\vert ^{2}\geq \left\vert U\right\vert ^{2}$
with $V,$ $U\in \mathcal{B}\left( H\right) ,$ then we have for any $A,$ $%
B\in \mathcal{B}_{2}\left( H\right) $ that%
\begin{align}
& \left[ \limfunc{tr}\left( \left\vert AV^{\ast }\right\vert ^{2}\right) %
\right] ^{1/2}\left[ \limfunc{tr}\left( \left\vert BV^{\ast }\right\vert
^{2}\right) \right] ^{1/2}-\left\vert \limfunc{tr}\left( \left( AV^{\ast
}\right) ^{\ast }BV^{\ast }\right) \right\vert  \label{e.2.27} \\
& \geq \left[ \limfunc{tr}\left( \left\vert AU^{\ast }\right\vert
^{2}\right) \right] ^{1/2}\left[ \limfunc{tr}\left( \left\vert BU^{\ast
}\right\vert ^{2}\right) \right] ^{1/2}-\left\vert \limfunc{tr}\left( \left(
AU^{\ast }\right) ^{\ast }BU^{\ast }\right) \right\vert \left( \geq 0\right)
.  \notag
\end{align}

If $U\in \mathcal{B}\left( H\right) $ is invertible, then 
\begin{equation*}
\frac{1}{\left\Vert U^{-1}\right\Vert }\left\Vert x\right\Vert \leq
\left\Vert Ux\right\Vert \leq \left\Vert U\right\Vert \left\Vert
x\right\Vert \text{ for any }x\in H,
\end{equation*}%
which implies that%
\begin{equation*}
\frac{1}{\left\Vert U^{-1}\right\Vert ^{2}}1_{H}\leq \left\vert U\right\vert
^{2}\leq \left\Vert U\right\Vert ^{2}1_{H}.
\end{equation*}%
By making use of (\ref{e.2.25}) we have the following trace inequality%
\begin{align}
& \left\Vert U\right\Vert ^{2}\left( \left[ \limfunc{tr}\left( \left\vert
A\right\vert ^{2}\right) \right] ^{1/2}\left[ \limfunc{tr}\left( \left\vert
B\right\vert ^{2}\right) \right] ^{1/2}-\left\vert \limfunc{tr}\left(
A^{\ast }B\right) \right\vert \right)  \label{e.2.28} \\
& \geq \left[ \limfunc{tr}\left( \left\vert AU^{\ast }\right\vert
^{2}\right) \right] ^{1/2}\left[ \limfunc{tr}\left( \left\vert BU^{\ast
}\right\vert ^{2}\right) \right] ^{1/2}-\left\vert \limfunc{tr}\left( \left(
AU^{\ast }\right) ^{\ast }BU^{\ast }\right) \right\vert  \notag \\
& \geq \frac{1}{\left\Vert U^{-1}\right\Vert ^{2}}\left( \left[ \limfunc{tr}%
\left( \left\vert A\right\vert ^{2}\right) \right] ^{1/2}\left[ \limfunc{tr}%
\left( \left\vert B\right\vert ^{2}\right) \right] ^{1/2}-\left\vert 
\limfunc{tr}\left( A^{\ast }B\right) \right\vert \right)  \notag
\end{align}%
for any $A,$ $B\in \mathcal{B}_{2}\left( H\right) .$

Similar results may be stated for $P\in \mathcal{B}_{1}\left( H\right) ,$ $%
P\geq 0$ and $A,$ $B\in \mathcal{B}\left( H\right) .$ The details are
omitted.

\section{Inequalities for Sequences of Operators}

For $n\geq 2,$ define the Cartesian products $\mathcal{B}^{\left( n\right)
}\left( H\right) :=$ $\mathcal{B}\left( H\right) \times ...\times \mathcal{B}%
\left( H\right) ,$ $\mathcal{B}_{2}^{\left( n\right) }\left( H\right) :=%
\mathcal{B}_{2}\left( H\right) \times ...\times \mathcal{B}_{2}\left(
H\right) $ and $\mathcal{B}_{+}^{\left( n\right) }\left( H\right) :=$ $%
\mathcal{B}_{+}\left( H\right) \times ...\times \mathcal{B}_{+}\left(
H\right) $ where $\mathcal{B}_{+}\left( H\right) $ denotes the convex cone
of nonnegative selfadjoint operators on $H,$ i.e. $P\in \mathcal{B}%
_{+}\left( H\right) $ if $\left\langle Px,x\right\rangle \geq 0$ for any $%
x\in H.$

\begin{proposition}
\label{p.3.1}Let $\mathbf{P}=\left( P_{1},...,P_{n}\right) \in \mathcal{B}%
_{+}^{\left( n\right) }\left( H\right) $ and $\mathbf{A}=\left(
A_{1},...,A_{n}\right) ,$ $\mathbf{B}=\left( B_{1},...,B_{n}\right) \in 
\mathcal{B}_{2}^{\left( n\right) }\left( H\right) $ and $\mathbf{z}=\left(
z_{1},...,z_{n}\right) \in \mathbb{C}^{n}$ with $n\geq 2.$ Then%
\begin{equation}
\left\vert \limfunc{tr}\left( \sum_{k=1}^{n}z_{k}P_{k}A_{k}^{\ast
}B_{k}\right) \right\vert ^{2}\leq \limfunc{tr}\left(
\sum_{k=1}^{n}\left\vert z_{k}\right\vert P_{k}\left\vert A_{k}\right\vert
^{2}\right) \limfunc{tr}\left( \sum_{k=1}^{n}\left\vert z_{k}\right\vert
P_{k}\left\vert B_{k}\right\vert ^{2}\right) .  \label{e.3.1}
\end{equation}
\end{proposition}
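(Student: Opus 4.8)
The plan is to derive (\ref{e.3.1}) as a single application of the Cauchy--Schwarz inequality for a suitable nonnegative Hermitian form on the Cartesian product $\mathcal{B}_{2}^{\left( n\right) }\left( H\right) $, the complex weights $z_{k}$ being carried through their arguments so that, after taking the modulus, the phases get absorbed. First I would fix $k$ and observe that for $X,Y\in \mathcal{B}_{2}\left( H\right) $ the operator $P_{k}Y^{\ast }X$ lies in $\mathcal{B}_{1}\left( H\right) $ (by Theorem \ref{t.1.2} (iii) and the ideal property (ii)), so $\limfunc{tr}\left( P_{k}Y^{\ast }X\right) $ is a well-defined scalar, and that $\left( X,Y\right) \mapsto \limfunc{tr}\left( P_{k}Y^{\ast }X\right) $ satisfies the defining properties \textit{(h)}--\textit{(hhh)} of a nonnegative Hermitian form: linearity in the first variable is clear; $\overline{\limfunc{tr}\left( P_{k}Y^{\ast }X\right) }=\limfunc{tr}\left[ \left( P_{k}Y^{\ast }X\right) ^{\ast }\right] =\limfunc{tr}\left( P_{k}X^{\ast }Y\right) $ by Theorem \ref{t.3.1} (i)--(ii), which gives conjugate symmetry; and, writing $P_{k}=P_{k}^{1/2}P_{k}^{1/2}$ and using cyclicity of the trace, $\limfunc{tr}\left( P_{k}X^{\ast }X\right) =\limfunc{tr}\left[ \left( XP_{k}^{1/2}\right) ^{\ast }\left( XP_{k}^{1/2}\right) \right] =\left\Vert XP_{k}^{1/2}\right\Vert _{2}^{2}\geq 0$, the norm being finite since $\mathcal{B}_{2}\left( H\right) $ is an ideal. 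Because each $\left\vert z_{k}\right\vert \geq 0$, the weighted combination
\begin{equation*}
\left\langle \mathbf{X},\mathbf{Y}\right\rangle :=\sum_{k=1}^{n}\left\vert z_{k}\right\vert \limfunc{tr}\left( P_{k}Y_{k}^{\ast }X_{k}\right) ,\qquad \mathbf{X},\mathbf{Y}\in \mathcal{B}_{2}^{\left( n\right) }\left( H\right) ,
\end{equation*}
is then itself a nonnegative Hermitian form on $\mathcal{B}_{2}^{\left( n\right) }\left( H\right) $.

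Next I would insert a judicious choice of arguments. Writing $z_{k}=\left\vert z_{k}\right\vert e^{i\theta _{k}}$ for those $k$ with $z_{k}\neq 0$, set $X_{k}:=B_{k}$ and $Y_{k}:=e^{-i\theta _{k}}A_{k}$, and set $X_{k}:=Y_{k}:=0$ whenever $z_{k}=0$ (such indices contributing nothing). Then $Y_{k}^{\ast }X_{k}=e^{i\theta _{k}}A_{k}^{\ast }B_{k}$, so $\left\vert z_{k}\right\vert \limfunc{tr}\left( P_{k}Y_{k}^{\ast }X_{k}\right) =z_{k}\limfunc{tr}\left( P_{k}A_{k}^{\ast }B_{k}\right) $; moreover $X_{k}^{\ast }X_{k}=\left\vert B_{k}\right\vert ^{2}$ and $Y_{k}^{\ast }Y_{k}=A_{k}^{\ast }A_{k}=\left\vert A_{k}\right\vert ^{2}$. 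Summing over $k$ and using linearity of the trace gives
\begin{equation*}
\left\langle \mathbf{X},\mathbf{Y}\right\rangle =\limfunc{tr}\left( \sum_{k=1}^{n}z_{k}P_{k}A_{k}^{\ast }B_{k}\right) ,\quad \left\langle \mathbf{Y},\mathbf{Y}\right\rangle =\limfunc{tr}\left( \sum_{k=1}^{n}\left\vert z_{k}\right\vert P_{k}\left\vert A_{k}\right\vert ^{2}\right) ,\quad \left\langle \mathbf{X},\mathbf{X}\right\rangle =\limfunc{tr}\left( \sum_{k=1}^{n}\left\vert z_{k}\right\vert P_{k}\left\vert B_{k}\right\vert ^{2}\right) ,
\end{equation*}
and the Schwarz inequality $\left\vert \left\langle \mathbf{X},\mathbf{Y}\right\rangle \right\vert ^{2}\leq \left\langle \mathbf{X},\mathbf{X}\right\rangle \left\langle \mathbf{Y},\mathbf{Y}\right\rangle $ for this form is precisely (\ref{e.3.1}).

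I do not anticipate a genuine obstacle: the argument is routine once the form is identified. The two points deserving care are the verification that the weighted sum is indeed a nonnegative Hermitian form -- in particular the positivity $\limfunc{tr}\left( P_{k}X^{\ast }X\right) \geq 0$, which comes from the square-root factorisation of $P_{k}$ -- and the elementary phase bookkeeping that turns the $\left\vert z_{k}\right\vert $ appearing on the diagonal terms into the $z_{k}$ appearing in the cross term. Equivalently, one could assemble the block operators $\mathrm{diag}\left( \left\vert z_{k}\right\vert P_{k}\right) $, $\mathrm{diag}\left( e^{-i\theta _{k}}A_{k}\right) $ and $\mathrm{diag}\left( B_{k}\right) $ on $H^{\left( n\right) }$ and apply the single-operator inequality (the dual form of (\ref{e.2.2}), cf. Remark \ref{r.2.3}), which again reduces everything to one use of Schwarz.
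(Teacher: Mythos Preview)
Your argument is correct, but it differs from the paper's. The paper proceeds in two steps: first the triangle inequality together with the single-operator Schwarz bound (the dual of (\ref{e.2.2})) gives
\[
\left\vert \limfunc{tr}\left( \sum_{k=1}^{n}z_{k}P_{k}A_{k}^{\ast }B_{k}\right) \right\vert \leq \sum_{k=1}^{n}\left\vert z_{k}\right\vert \left[ \limfunc{tr}\left( P_{k}\left\vert A_{k}\right\vert ^{2}\right) \right] ^{1/2}\left[ \limfunc{tr}\left( P_{k}\left\vert B_{k}\right\vert ^{2}\right) \right] ^{1/2},
\]
and then the weighted discrete Cauchy--Bunyakovsky--Schwarz inequality in $\mathbb{R}^{n}$ is applied to the right-hand side. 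In contrast, you package everything into a single nonnegative Hermitian form on $\mathcal{B}_{2}^{\left( n\right) }\left( H\right) $ and apply Schwarz once, using the phase trick $Y_{k}=e^{-i\theta _{k}}A_{k}$ to push the complex coefficients $z_{k}$ into the cross term. Your route is shorter and conceptually cleaner (one inequality instead of $n+1$), while the paper's route is more modular and yields the intermediate bound displayed above, which is slightly sharper than (\ref{e.3.1}) and may be of independent use.
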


\begin{proof}
Using the properties of modulus and the inequality (\ref{e.2.2}) we have 
\begin{align*}
\left\vert \limfunc{tr}\left( \sum_{k=1}^{n}z_{k}P_{k}A_{k}^{\ast
}B_{k}\right) \right\vert & =\left\vert \sum_{k=1}^{n}z_{k}\limfunc{tr}%
\left( P_{k}A_{k}^{\ast }B_{k}\right) \right\vert \\
& \leq \sum_{k=1}^{n}\left\vert z_{k}\right\vert \left\vert \limfunc{tr}%
\left( P_{k}A_{k}^{\ast }B_{k}\right) \right\vert \\
& \leq \sum_{k=1}^{n}\left\vert z_{k}\right\vert \left[ \limfunc{tr}\left(
P_{k}\left\vert A_{k}\right\vert ^{2}\right) \right] ^{1/2}\left[ \limfunc{tr%
}\left( P_{k}\left\vert B_{k}\right\vert ^{2}\right) \right] ^{1/2}.
\end{align*}%
Utilizing the weighted discrete Cauchy-Bunyakovsky-Schwarz inequality we
also have 
\begin{align*}
& \sum_{k=1}^{n}\left\vert z_{k}\right\vert \left[ \limfunc{tr}\left(
P_{k}\left\vert A_{k}\right\vert ^{2}\right) \right] ^{1/2}\left[ \limfunc{tr%
}\left( P_{k}\left\vert B_{k}\right\vert ^{2}\right) \right] ^{1/2} \\
& \leq \left( \sum_{k=1}^{n}\left\vert z_{k}\right\vert \left( \left[ 
\limfunc{tr}\left( P_{k}\left\vert A_{k}\right\vert ^{2}\right) \right]
^{1/2}\right) ^{2}\right) ^{1/2}\left( \sum_{k=1}^{n}\left\vert
z_{k}\right\vert \left( \left[ \limfunc{tr}\left( P_{k}\left\vert
B_{k}\right\vert ^{2}\right) \right] ^{1/2}\right) ^{2}\right) ^{1/2} \\
& =\left( \sum_{k=1}^{n}\left\vert z_{k}\right\vert \limfunc{tr}\left(
P_{k}\left\vert A_{k}\right\vert ^{2}\right) \right) ^{1/2}\left(
\sum_{k=1}^{n}\left\vert z_{k}\right\vert \limfunc{tr}\left( P_{k}\left\vert
B_{k}\right\vert ^{2}\right) \right) ^{1/2} \\
& =\left( \limfunc{tr}\left( \sum_{k=1}^{n}\left\vert z_{k}\right\vert
P_{k}\left\vert A_{k}\right\vert ^{2}\right) \right) ^{1/2}\left( \limfunc{tr%
}\left( \sum_{k=1}^{n}\left\vert z_{k}\right\vert P_{k}\left\vert
B_{k}\right\vert ^{2}\right) \right) ^{1/2},
\end{align*}%
which is equivalent to the desired result (\ref{e.3.1}).
\end{proof}

We consider the functional for $n$-tuples of nonnegative operators as
follows:%
\begin{align}
\sigma _{\mathbf{A},\mathbf{B}}\left( \mathbf{P}\right) & :=\left[ \limfunc{%
tr}\left( \sum_{k=1}^{n}P_{k}\left\vert A_{k}\right\vert ^{2}\right) \right]
^{1/2}\left[ \limfunc{tr}\left( \sum_{k=1}^{n}P_{k}\left\vert
B_{k}\right\vert ^{2}\right) \right] ^{1/2}  \label{e.3.2} \\
& -\left\vert \limfunc{tr}\left( \sum_{k=1}^{n}P_{k}A_{k}^{\ast
}B_{k}\right) \right\vert .  \notag
\end{align}%
Utilising a similar argument to the one in Theorem \ref{t.2.2} we can state:

\begin{proposition}
\label{p.3.2}Let $\mathbf{A}=\left( A_{1},...,A_{n}\right) ,$ $\mathbf{B}%
=\left( B_{1},...,B_{n}\right) \in \mathcal{B}_{2}^{\left( n\right) }\left(
H\right) .$

(i) For any $\mathbf{P},$ $\mathbf{Q}\in \mathcal{B}_{+}^{\left( n\right)
}\left( H\right) $ we have%
\begin{equation}
\sigma _{\mathbf{A},\mathbf{B}}\left( \mathbf{P}+\mathbf{Q}\right) \geq
\sigma _{\mathbf{A},\mathbf{B}}\left( \mathbf{P}\right) +\sigma _{\mathbf{A},%
\mathbf{B}}\left( \mathbf{Q}\right) \left( \geq 0\right) ,  \label{e.3.3}
\end{equation}%
namely, $\sigma _{\mathbf{A},\mathbf{B}}$ is a superadditive functional on $%
\mathcal{B}_{+}^{\left( n\right) }\left( H\right) ;$

(ii) For any $\mathbf{P},$ $\mathbf{Q}\in \mathcal{B}_{+}^{\left( n\right)
}\left( H\right) $ with $\mathbf{P}\geq \mathbf{Q,}$ namely $P_{k}\geq Q_{k}$
for all $k\in \left\{ 1,...,n\right\} $ we have%
\begin{equation}
\sigma _{\mathbf{A},\mathbf{B}}\left( \mathbf{P}\right) \geq \sigma _{%
\mathbf{A},\mathbf{B}}\left( \mathbf{Q}\right) \left( \geq 0\right) ,
\label{e.3.4}
\end{equation}%
namely, $\sigma _{\mathbf{A},\mathbf{B}}$ is a monotonic nondecreasing
functional on $\mathcal{B}_{+}^{\left( n\right) }\left( H\right) ;$

(iii) If $\mathbf{P},\mathbf{Q}\in \mathcal{B}_{+}^{\left( n\right) }\left(
H\right) $ and there exist the constants $M>m>0$ such that $M\mathbf{Q}\geq $
$\mathbf{P}\geq m\mathbf{Q}$ then%
\begin{equation}
M\sigma _{\mathbf{A},\mathbf{B}}\left( \mathbf{Q}\right) \geq \sigma _{%
\mathbf{A},\mathbf{B}}\left( \mathbf{P}\right) \geq m\sigma _{\mathbf{A},%
\mathbf{B}}\left( \mathbf{Q}\right) \left( \geq 0\right) .  \label{e.3.5}
\end{equation}
\end{proposition}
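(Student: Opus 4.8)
The plan is to reproduce, for $n$-tuples, the argument used for Theorem \ref{t.2.2}, the decisive observation being that Proposition \ref{p.3.1} supplies for $\sigma _{\mathbf{A},\mathbf{B}}$ exactly the Schwarz-type bound (\ref{e.2.2}) used there, while linearity of the trace lets us split each of the three terms in the definition (\ref{e.3.2}) over a sum of two $n$-tuples.

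For part (i), let $\mathbf{P},\mathbf{Q}\in \mathcal{B}_{+}^{\left( n\right) }\left( H\right) $. Since $\limfunc{tr}$ is linear,
\[
\limfunc{tr}\left( \sum_{k=1}^{n}\left( P_{k}+Q_{k}\right) \left\vert A_{k}\right\vert ^{2}\right) =\limfunc{tr}\left( \sum_{k=1}^{n}P_{k}\left\vert A_{k}\right\vert ^{2}\right) +\limfunc{tr}\left( \sum_{k=1}^{n}Q_{k}\left\vert A_{k}\right\vert ^{2}\right) ,
\]
and likewise with $\left\vert B_{k}\right\vert ^{2}$ and with $A_{k}^{\ast }B_{k}$ in place of $\left\vert A_{k}\right\vert ^{2}$. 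Writing $a=\left[ \limfunc{tr}\left( \sum_{k}P_{k}\left\vert A_{k}\right\vert ^{2}\right) \right] ^{1/2}$, $b=\left[ \limfunc{tr}\left( \sum_{k}Q_{k}\left\vert A_{k}\right\vert ^{2}\right) \right] ^{1/2}$, $c=\left[ \limfunc{tr}\left( \sum_{k}P_{k}\left\vert B_{k}\right\vert ^{2}\right) \right] ^{1/2}$, $d=\left[ \limfunc{tr}\left( \sum_{k}Q_{k}\left\vert B_{k}\right\vert ^{2}\right) \right] ^{1/2}$, all of which are nonnegative reals because a finite sum of nonnegative operators is nonnegative, I would then invoke the elementary inequality $\left( a^{2}+b^{2}\right) ^{1/2}\left( c^{2}+d^{2}\right) ^{1/2}\geq ac+bd$ together with the triangle inequality for the modulus applied to $\limfunc{tr}\left( \sum_{k}P_{k}A_{k}^{\ast }B_{k}\right) +\limfunc{tr}\left( \sum_{k}Q_{k}A_{k}^{\ast }B_{k}\right) $ to conclude $\sigma _{\mathbf{A},\mathbf{B}}\left( \mathbf{P}+\mathbf{Q}\right) \geq \sigma _{\mathbf{A},\mathbf{B}}\left( \mathbf{P}\right) +\sigma _{\mathbf{A},\mathbf{B}}\left( \mathbf{Q}\right) $. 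The nonnegativity $\sigma _{\mathbf{A},\mathbf{B}}\left( \mathbf{P}\right) \geq 0$ is precisely inequality (\ref{e.3.1}) of Proposition \ref{p.3.1} taken with $z_{1}=\cdots =z_{n}=1$, after extracting square roots.

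For part (ii), if $\mathbf{P}\geq \mathbf{Q}$, i.e. $P_{k}\geq Q_{k}$ for every $k$, then $\mathbf{P}-\mathbf{Q}=\left( P_{1}-Q_{1},\dots ,P_{n}-Q_{n}\right) $ lies again in $\mathcal{B}_{+}^{\left( n\right) }\left( H\right) $, so applying the superadditivity of (i) to $\mathbf{P}=\left( \mathbf{P}-\mathbf{Q}\right) +\mathbf{Q}$ gives $\sigma _{\mathbf{A},\mathbf{B}}\left( \mathbf{P}\right) \geq \sigma _{\mathbf{A},\mathbf{B}}\left( \mathbf{P}-\mathbf{Q}\right) +\sigma _{\mathbf{A},\mathbf{B}}\left( \mathbf{Q}\right) \geq \sigma _{\mathbf{A},\mathbf{B}}\left( \mathbf{Q}\right) $, the last step using the nonnegativity just established. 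Part (iii) then follows from (ii) and the evident positive homogeneity $\sigma _{\mathbf{A},\mathbf{B}}\left( \lambda \mathbf{P}\right) =\lambda \,\sigma _{\mathbf{A},\mathbf{B}}\left( \mathbf{P}\right) $ for $\lambda \geq 0$: from $\mathbf{P}\geq m\mathbf{Q}$ one gets $\sigma _{\mathbf{A},\mathbf{B}}\left( \mathbf{P}\right) \geq \sigma _{\mathbf{A},\mathbf{B}}\left( m\mathbf{Q}\right) =m\,\sigma _{\mathbf{A},\mathbf{B}}\left( \mathbf{Q}\right) $, and from $M\mathbf{Q}\geq \mathbf{P}$ one gets $M\,\sigma _{\mathbf{A},\mathbf{B}}\left( \mathbf{Q}\right) =\sigma _{\mathbf{A},\mathbf{B}}\left( M\mathbf{Q}\right) \geq \sigma _{\mathbf{A},\mathbf{B}}\left( \mathbf{P}\right) $.

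I do not anticipate a genuine obstacle: once one recognises that Proposition \ref{p.3.1} is the $n$-tuple analogue of (\ref{e.2.2}), everything is a transcription of the proof of Theorem \ref{t.2.2}. The only place asking for a little attention is the bookkeeping of the three sums under the trace, which is entirely controlled by linearity of $\limfunc{tr}\left( \cdot \right) $ and by the fact that each partial sum $\sum_{k}P_{k}\left\vert A_{k}\right\vert ^{2}$, and similarly for $B_{k}$, is a nonnegative trace class operator, so that all the radicands are legitimate nonnegative reals.
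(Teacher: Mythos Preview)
Your proposal is correct and follows precisely the route the paper intends: the paper does not write out a separate proof but simply says ``Utilising a similar argument to the one in Theorem \ref{t.2.2}'', and your transcription of that argument to $n$-tuples---linearity of the trace, the elementary inequality $(a^{2}+b^{2})^{1/2}(c^{2}+d^{2})^{1/2}\geq ac+bd$, the triangle inequality for the modulus, then superadditivity $\Rightarrow$ monotonicity $\Rightarrow$ the sandwich bound via positive homogeneity---is exactly what is meant. One tiny wording issue: the operator $\sum_{k}P_{k}\left\vert A_{k}\right\vert ^{2}$ need not itself be selfadjoint (hence not ``nonnegative'' in the operator sense), but its trace equals $\sum_{k}\limfunc{tr}\left( A_{k}^{\ast }P_{k}A_{k}\right) \geq 0$, which is all you actually use.
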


If $\mathbf{P=}\left( p_{1}1_{H},...,p_{n}1_{H}\right) $ with $p_{k}\geq 0,$ 
$k\in \left\{ 1,...,n\right\} $ then the functional of nonnegative weights $%
\mathbf{p=}\left( p_{1},...,p_{n}\right) $ defined by 
\begin{align}
\sigma _{\mathbf{A},\mathbf{B}}\left( \mathbf{p}\right) & :=\left[ \limfunc{%
tr}\left( \sum_{k=1}^{n}p_{k}\left\vert A_{k}\right\vert ^{2}\right) \right]
^{1/2}\left[ \limfunc{tr}\left( \sum_{k=1}^{n}p_{k}\left\vert
B_{k}\right\vert ^{2}\right) \right] ^{1/2}  \label{e.3.6} \\
& -\left\vert \limfunc{tr}\left( \sum_{k=1}^{n}p_{k}A_{k}^{\ast
}B_{k}\right) \right\vert .  \notag
\end{align}%
has the same properties as in (\ref{e.3.3})-(\ref{e.3.5}).

Moreover, we have the simple bounds:%
\begin{align}
& \max_{k\in \left\{ 1,...,n\right\} }\left\{ p_{k}\right\} \left\{ \left[ 
\limfunc{tr}\left( \sum_{k=1}^{n}\left\vert A_{k}\right\vert ^{2}\right) %
\right] ^{1/2}\left[ \limfunc{tr}\left( \sum_{k=1}^{n}\left\vert
B_{k}\right\vert ^{2}\right) \right] ^{1/2}\right.  \label{e.3.7} \\
& \left. -\left\vert \limfunc{tr}\left( \sum_{k=1}^{n}A_{k}^{\ast
}B_{k}\right) \right\vert \right\}  \notag \\
& \geq \left[ \limfunc{tr}\left( \sum_{k=1}^{n}p_{k}\left\vert
A_{k}\right\vert ^{2}\right) \right] ^{1/2}\left[ \limfunc{tr}\left(
\sum_{k=1}^{n}p_{k}\left\vert B_{k}\right\vert ^{2}\right) \right]
^{1/2}-\left\vert \limfunc{tr}\left( \sum_{k=1}^{n}p_{k}A_{k}^{\ast
}B_{k}\right) \right\vert  \notag \\
& \geq \min_{k\in \left\{ 1,...,n\right\} }\left\{ p_{k}\right\} \left\{ 
\left[ \limfunc{tr}\left( \sum_{k=1}^{n}\left\vert A_{k}\right\vert
^{2}\right) \right] ^{1/2}\left[ \limfunc{tr}\left( \sum_{k=1}^{n}\left\vert
B_{k}\right\vert ^{2}\right) \right] ^{1/2}\right.  \notag \\
& \left. -\left\vert \limfunc{tr}\left( \sum_{k=1}^{n}A_{k}^{\ast
}B_{k}\right) \right\vert \right\} .  \notag
\end{align}

\section{Inequalities for Power Series of Operators}

Denote by:%
\begin{equation*}
D(0,R)=\left\{ 
\begin{array}{ll}
\{z\in \mathbb{C}:\left\vert z\right\vert <R\}, & \quad \text{if $R<\infty $}
\\ 
\mathbb{C}, & \quad \text{if $R=\infty $},%
\end{array}%
\right.
\end{equation*}%
and consider the functions:%
\begin{equation*}
\lambda \mapsto f(\lambda ):D(0,R)\rightarrow \mathbb{C},\text{ }f(\lambda
):=\sum_{n=0}^{\infty }\alpha _{n}\lambda ^{n}
\end{equation*}%
and 
\begin{equation*}
\lambda \mapsto f_{a}(\lambda ):D(0,R)\rightarrow \mathbb{C},\text{ }%
f_{a}(\lambda ):=\sum_{n=0}^{\infty }\left\vert \alpha _{n}\right\vert
\lambda ^{n}.
\end{equation*}

As some natural examples that are useful for applications, we can point out
that, if 
\begin{align}
f\left( \lambda \right) & =\sum_{n=1}^{\infty }\frac{\left( -1\right) ^{n}}{n%
}\lambda ^{n}=\ln \frac{1}{1+\lambda },\text{ }\lambda \in D\left(
0,1\right) ;  \label{E1} \\
g\left( \lambda \right) & =\sum_{n=0}^{\infty }\frac{\left( -1\right) ^{n}}{%
\left( 2n\right) !}\lambda ^{2n}=\cos \lambda ,\text{ }\lambda \in \mathbb{C}%
\text{;}  \notag \\
h\left( \lambda \right) & =\sum_{n=0}^{\infty }\frac{\left( -1\right) ^{n}}{%
\left( 2n+1\right) !}\lambda ^{2n+1}=\sin \lambda ,\text{ }\lambda \in 
\mathbb{C}\text{;}  \notag \\
l\left( \lambda \right) & =\sum_{n=0}^{\infty }\left( -1\right) ^{n}\lambda
^{n}=\frac{1}{1+\lambda },\text{ }\lambda \in D\left( 0,1\right) ;  \notag
\end{align}%
then the corresponding functions constructed by the use of the absolute
values of the coefficients are%
\begin{align}
f_{a}\left( \lambda \right) & =\sum_{n=1}^{\infty }\frac{1}{n}\lambda
^{n}=\ln \frac{1}{1-\lambda },\text{ }\lambda \in D\left( 0,1\right) ;
\label{E2} \\
g_{a}\left( \lambda \right) & =\sum_{n=0}^{\infty }\frac{1}{\left( 2n\right)
!}\lambda ^{2n}=\cosh \lambda ,\text{ }\lambda \in \mathbb{C}\text{;}  \notag
\\
h_{a}\left( \lambda \right) & =\sum_{n=0}^{\infty }\frac{1}{\left(
2n+1\right) !}\lambda ^{2n+1}=\sinh \lambda ,\text{ }\lambda \in \mathbb{C}%
\text{;}  \notag \\
l_{a}\left( \lambda \right) & =\sum_{n=0}^{\infty }\lambda ^{n}=\frac{1}{%
1-\lambda },\text{ }\lambda \in D\left( 0,1\right) .  \notag
\end{align}%
Other important examples of functions as power series representations with
nonnegative coefficients are:%
\begin{align}
\exp \left( \lambda \right) & =\sum_{n=0}^{\infty }\frac{1}{n!}\lambda
^{n}\qquad \lambda \in \mathbb{C}\text{,}  \label{E3} \\
\frac{1}{2}\ln \left( \frac{1+\lambda }{1-\lambda }\right) &
=\sum_{n=1}^{\infty }\frac{1}{2n-1}\lambda ^{2n-1},\qquad \lambda \in
D\left( 0,1\right) ;  \notag \\
\sin ^{-1}\left( \lambda \right) & =\sum_{n=0}^{\infty }\frac{\Gamma \left(
n+\frac{1}{2}\right) }{\sqrt{\pi }\left( 2n+1\right) n!}\lambda
^{2n+1},\qquad \lambda \in D\left( 0,1\right) ;  \notag \\
\tanh ^{-1}\left( \lambda \right) & =\sum_{n=1}^{\infty }\frac{1}{2n-1}%
\lambda ^{2n-1},\qquad \lambda \in D\left( 0,1\right)  \notag \\
_{2}F_{1}\left( \alpha ,\beta ,\gamma ,\lambda \right) & =\sum_{n=0}^{\infty
}\frac{\Gamma \left( n+\alpha \right) \Gamma \left( n+\beta \right) \Gamma
\left( \gamma \right) }{n!\Gamma \left( \alpha \right) \Gamma \left( \beta
\right) \Gamma \left( n+\gamma \right) }\lambda ^{n},\alpha ,\beta ,\gamma
>0,  \notag \\
\lambda & \in D\left( 0,1\right) ;  \notag
\end{align}%
where $\Gamma $ is \textit{Gamma function}.

\begin{proposition}
\label{p.3.3}Let $f(\lambda ):=\sum_{n=0}^{\infty }\alpha _{n}\lambda ^{n}$
be a power series with complex coefficients and convergent on the open disk $%
D\left( 0,R\right) ,$ $R>0.$ If $\left( H,\left\langle \cdot ,\cdot
\right\rangle \right) $ is a separable infinite-dimensional Hilbert space
and $A,$ $B\in \mathcal{B}_{1}\left( H\right) $ are positive operators with $%
\limfunc{tr}\left( A\right) ,$ $\limfunc{tr}\left( B\right) <R^{1/2}$, then%
\begin{equation}
\left\vert \limfunc{tr}\left( f\left( AB\right) \right) \right\vert ^{2}\leq
f_{a}^{2}\left( \limfunc{tr}A\limfunc{tr}B\right) \leq f_{a}\left( (\limfunc{%
tr}A)^{2}\right) f_{a}\left( (\limfunc{tr}B)^{2}\right) .  \label{e.3.7.1}
\end{equation}
\end{proposition}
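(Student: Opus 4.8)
The plan is to expand $f\left( AB\right) $ as a power series, bound each $\limfunc{tr}\left( \left( AB\right) ^{n}\right) $ by the Yang--Liu inequality (\ref{e.1.10}), and then reduce the whole estimate to two numerical facts: absolute convergence of $f_{a}$ and a single Cauchy-Bunyakovsky-Schwarz step for sequences.

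First I would justify that $\limfunc{tr}\left( f\left( AB\right) \right) $ makes sense and equals $\sum_{n=0}^{\infty }\alpha _{n}\limfunc{tr}\left( \left( AB\right) ^{n}\right) $. Since $A,B\geq 0$ are trace class, $\left\Vert A\right\Vert \leq \limfunc{tr}A$ and $\left\Vert B\right\Vert \leq \limfunc{tr}B$, hence $\left\Vert AB\right\Vert \leq \limfunc{tr}A\,\limfunc{tr}B<R$; therefore $\sum_{n}\alpha _{n}\left( AB\right) ^{n}$ converges in $\mathcal{B}\left( H\right) $, and since $\left\Vert \left( AB\right) ^{n}\right\Vert _{1}\leq \left\Vert AB\right\Vert ^{n-1}\left\Vert AB\right\Vert _{1}$ its non-constant part converges in $\mathcal{B}_{1}\left( H\right) $, so linearity and continuity of the trace yield the termwise formula. (The term $n=0$ contributes $\alpha _{0}1_{H}$, which in infinite dimensions is not trace class; this is the one point needing a convention, and I would read $\limfunc{tr}\left( f\left( AB\right) \right) $ as the absolutely convergent numerical series $\sum_{n}\alpha _{n}\limfunc{tr}\left( \left( AB\right) ^{n}\right) $, or simply assume $\alpha _{0}=0$, as in the logarithm, sine and $\left( 1+\lambda \right) ^{-1}$ examples.)

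Next I would record that $\limfunc{tr}\left( \left( AB\right) ^{n}\right) \geq 0$: by cyclicity of the trace $\limfunc{tr}\left( \left( AB\right) ^{n}\right) =\limfunc{tr}\left( \left( A^{1/2}BA^{1/2}\right) ^{n}\right) $, and $A^{1/2}BA^{1/2}=\left\vert B^{1/2}A^{1/2}\right\vert ^{2}\geq 0$. Since $H$ is separable infinite-dimensional and $A,B\in \mathcal{B}_{1}\left( H\right) $ are positive, Liu's extension of (\ref{e.1.10}) \cite{L} gives $\limfunc{tr}\left( \left( AB\right) ^{n}\right) \leq \left( \limfunc{tr}A\right) ^{n}\left( \limfunc{tr}B\right) ^{n}$ for every $n$. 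Hence
\begin{equation*}
\left\vert \limfunc{tr}\left( f\left( AB\right) \right) \right\vert \leq \sum_{n=0}^{\infty }\left\vert \alpha _{n}\right\vert \limfunc{tr}\left( \left( AB\right) ^{n}\right) \leq \sum_{n=0}^{\infty }\left\vert \alpha _{n}\right\vert \left( \limfunc{tr}A\,\limfunc{tr}B\right) ^{n}=f_{a}\left( \limfunc{tr}A\,\limfunc{tr}B\right) ,
\end{equation*}
the last series being finite because $\limfunc{tr}A\,\limfunc{tr}B<R$; squaring yields the first inequality in (\ref{e.3.7.1}).

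Finally, for the second inequality I would set $a:=\limfunc{tr}A$, $b:=\limfunc{tr}B$ (both in $[0,R^{1/2})$, so $a^{2},b^{2}<R$) and apply the discrete Cauchy-Bunyakovsky-Schwarz inequality to the sequences $\left( \sqrt{\left\vert \alpha _{n}\right\vert }\,a^{n}\right) $ and $\left( \sqrt{\left\vert \alpha _{n}\right\vert }\,b^{n}\right) $:
\begin{equation*}
f_{a}\left( ab\right) =\sum_{n=0}^{\infty }\left( \sqrt{\left\vert \alpha _{n}\right\vert }\,a^{n}\right) \left( \sqrt{\left\vert \alpha _{n}\right\vert }\,b^{n}\right) \leq \left( \sum_{n=0}^{\infty }\left\vert \alpha _{n}\right\vert a^{2n}\right) ^{1/2}\left( \sum_{n=0}^{\infty }\left\vert \alpha _{n}\right\vert b^{2n}\right) ^{1/2}=f_{a}\left( a^{2}\right) ^{1/2}f_{a}\left( b^{2}\right) ^{1/2},
\end{equation*}
and squaring gives $f_{a}^{2}\left( \limfunc{tr}A\,\limfunc{tr}B\right) \leq f_{a}\left( \left( \limfunc{tr}A\right) ^{2}\right) f_{a}\left( \left( \limfunc{tr}B\right) ^{2}\right) $. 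The only real obstacle is the bookkeeping in the first step (convergence and the meaning of the trace of $f\left( AB\right) $); once the Yang--Liu inequality is invoked, everything else is elementary.
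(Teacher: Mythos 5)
Your proposal is correct and follows essentially the same route as the paper: bound $\limfunc{tr}\left[ (AB)^{k}\right] $ termwise by the Yang--Liu inequality (\ref{e.1.10}), sum with the coefficients $\left\vert \alpha _{k}\right\vert $, apply the weighted Cauchy-Bunyakovsky-Schwarz inequality to split $f_{a}\left( \limfunc{tr}A\limfunc{tr}B\right) $, and pass to the limit using continuity of the trace. Your explicit remark about the $n=0$ term $\alpha _{0}1_{H}$ not being trace class in infinite dimensions is a point the paper's proof glosses over, and your convention (reading $\limfunc{tr}\left( f\left( AB\right) \right) $ as the convergent numerical series) is a reasonable way to handle it.
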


\begin{proof}
By the inequality (\ref{e.1.10}) for the positive operators $A,$ $B\in 
\mathcal{B}_{1}\left( H\right) $ we have%
\begin{align}
\left\vert \limfunc{tr}\left[ \sum_{k=0}^{n}\alpha _{k}(AB)^{k}\right]
\right\vert & =\left\vert \sum_{k=0}^{n}\alpha _{k}\limfunc{tr}\left[
(AB)^{k}\right] \right\vert  \label{e.3.7.2} \\
& \leq \sum_{k=0}^{n}\left\vert \alpha _{k}\right\vert \left\vert \limfunc{tr%
}\left[ (AB)^{k}\right] \right\vert =\sum_{k=0}^{n}\left\vert \alpha
_{k}\right\vert \limfunc{tr}\left[ (AB)^{k}\right]  \notag \\
& \leq \sum_{k=0}^{n}\left\vert \alpha _{k}\right\vert (\limfunc{tr}A)^{k}(%
\limfunc{tr}B)^{k}=\sum_{k=0}^{n}\left\vert \alpha _{k}\right\vert (\limfunc{%
tr}A\limfunc{tr}B)^{k}.  \notag
\end{align}%
Utilising the weighted Cauchy-Bunyakovsky-Schwarz inequality for sums we have%
\begin{equation}
\sum_{k=0}^{n}\left\vert \alpha _{k}\right\vert (\limfunc{tr}A)^{k}(\limfunc{%
tr}B)^{k}\leq \left( \sum_{k=0}^{n}\left\vert \alpha _{k}\right\vert (%
\limfunc{tr}A)^{2k}\right) ^{1/2}\left( \sum_{k=0}^{n}\left\vert \alpha
_{k}\right\vert (\limfunc{tr}B)^{2k}\right) ^{1/2}.  \label{e.3.7.3}
\end{equation}%
Then by (\ref{e.3.7.2}) and (\ref{e.3.7.3}) we have%
\begin{align}
\left\vert \limfunc{tr}\left[ \sum_{k=0}^{n}\alpha _{k}(AB)^{k}\right]
\right\vert ^{2}& \leq \left[ \sum_{k=0}^{n}\left\vert \alpha
_{k}\right\vert (\limfunc{tr}A\limfunc{tr}B)^{k}\right] ^{2}  \label{e.3.7.4}
\\
& \leq \sum_{k=0}^{n}\left\vert \alpha _{k}\right\vert \left[ (\limfunc{tr}%
A)^{2}\right] ^{k}\sum_{k=0}^{n}\left\vert \alpha _{k}\right\vert \left[ (%
\limfunc{tr}B)^{2}\right] ^{k}  \notag
\end{align}%
for $n\geq 1.$

Since $0\leq \limfunc{tr}\left( A\right) ,\limfunc{tr}\left( B\right)
<R^{1/2},$ the numerical series 
\begin{equation*}
\sum_{k=0}^{\infty }\left\vert \alpha _{k}\right\vert (\limfunc{tr}A\limfunc{%
tr}B)^{k},\text{ }\sum_{k=0}^{\infty }\left\vert \alpha _{k}\right\vert %
\left[ (\limfunc{tr}A)^{2}\right] ^{k}\text{ and }\sum_{k=0}^{\infty
}\left\vert \alpha _{k}\right\vert \left[ (\limfunc{tr}B)^{2}\right] ^{k}
\end{equation*}%
are convergent.

Also, since $0\leq \limfunc{tr}(AB)\leq \limfunc{tr}\left( A\right) \limfunc{%
tr}\left( B\right) <R,$ the operator series $\sum_{k=0}^{\infty }\alpha
_{k}(AB)^{k}$ is convergent in $\mathcal{B}_{1}\left( H\right) .$

Letting $n\rightarrow \infty $ in (\ref{e.3.7.4}) and utilizing the
continuity property of $\limfunc{tr}\left( \cdot \right) $ on $\mathcal{B}%
_{1}\left( H\right) $ we get the desired result (\ref{e.3.7.1}).
\end{proof}

\begin{example}
\label{Ex.3.0} a) If we take in (\ref{e.3.7.1}) $f(\lambda )=\left( 1\pm
\lambda \right) ^{-1}$, $\left\vert \lambda \right\vert <1$ then we get the
inequality%
\begin{equation}
\left\vert \limfunc{tr}\left( \left( 1_{H}\pm AB\right) ^{-1}\right)
\right\vert ^{2}\leq \left( 1-\left( \limfunc{tr}A\right) ^{2}\right)
^{-1}\left( 1-\left( \limfunc{tr}B\right) ^{2}\right) ^{-1}  \label{e.3.7.5}
\end{equation}%
for any $A,$ $B\in \mathcal{B}_{1}\left( H\right) $ positive operators with $%
\limfunc{tr}\left( A\right) ,$ $\limfunc{tr}\left( B\right) <1.$

b) If we take in (\ref{e.3.7.1}) $f(\lambda )=\ln \left( 1\pm \lambda
\right) ^{-1}$, $\left\vert \lambda \right\vert <1$, then we get the
inequality%
\begin{equation}
\left\vert \limfunc{tr}\left( \ln \left( 1_{H}\pm AB\right) ^{-1}\right)
\right\vert ^{2}\leq \ln \left( 1-\left( \limfunc{tr}A\right) ^{2}\right)
^{-1}\ln \left( 1-\left( \limfunc{tr}B\right) ^{2}\right) ^{-1}
\label{e.3.7.6}
\end{equation}%
for any $A,$ $B\in \mathcal{B}_{1}\left( H\right) $ positive operators with $%
\limfunc{tr}\left( A\right) ,$ $\limfunc{tr}\left( B\right) <1.$
\end{example}

We have the following result as well:

\begin{theorem}
\label{t.2.4}Let $f(\lambda ):=\sum_{n=0}^{\infty }\alpha _{n}\lambda ^{n}$
be a power series with complex coefficients and convergent on the open disk $%
D\left( 0,R\right) ,$ $R>0.$ If $A,$ $B\in \mathcal{B}_{2}\left( H\right) $
are normal operators with $A^{\ast }B=BA^{\ast }$ and $\limfunc{tr}\left(
\left\vert A\right\vert ^{2}\right) ,$ $\limfunc{tr}\left( \left\vert
B\right\vert ^{2}\right) <R$ then we have the inequality%
\begin{equation}
\left\vert \limfunc{tr}\left( f\left( A^{\ast }B\right) \right) \right\vert
^{2}\leq \limfunc{tr}\left( f_{a}\left( \left\vert A\right\vert ^{2}\right)
\right) \limfunc{tr}\left( f_{a}\left( \left\vert B\right\vert ^{2}\right)
\right) .  \label{e.3.8}
\end{equation}
\end{theorem}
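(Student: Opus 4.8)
The plan is to follow the scheme of the proof of Proposition~\ref{p.3.3}, replacing Yang's inequality (\ref{e.1.10}) by the Schwarz inequality (\ref{e.2.2}) taken with $P=1_{H}$ and using two algebraic identities that are valid under the present hypotheses. Since $A,B\in\mathcal{B}_{2}\left( H\right) \subseteq\mathcal{B}\left( H\right) $ and $\mathcal{B}_{2}\left( H\right) $ is an ideal (Theorem~\ref{t.1.1}(iii)), every power $A^{k},B^{k}$ belongs to $\mathcal{B}_{2}\left( H\right) $, so $\left( A^{k}\right) ^{\ast }B^{k}\in\mathcal{B}_{1}\left( H\right) $ by Theorem~\ref{t.1.2}(iii) and $\limfunc{tr}\left( \left( A^{k}\right) ^{\ast }B^{k}\right) $ is well defined.

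The first identity uses the commutation relation $A^{\ast }B=BA^{\ast }$: a straightforward induction gives $B^{k}A^{\ast }=A^{\ast }B^{k}$, hence $\left( A^{\ast }B\right) ^{k}=\left( A^{\ast }\right) ^{k}B^{k}=\left( A^{k}\right) ^{\ast }B^{k}$ for all $k\geq 0$. The second uses normality: since $A^{\ast }A=AA^{\ast }$, the operator $A$ commutes with $A^{\ast }A$, and another induction yields $\left( A^{k}\right) ^{\ast }A^{k}=\left( A^{\ast }A\right) ^{k}$, i.e. $\left\vert A^{k}\right\vert ^{2}=\left( \left\vert A\right\vert ^{2}\right) ^{k}$, and likewise $\left\vert B^{k}\right\vert ^{2}=\left( \left\vert B\right\vert ^{2}\right) ^{k}$. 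Applying the Schwarz inequality (\ref{e.2.2}) with $P=1_{H}$ to the pair $A^{k},B^{k}$ (equivalently, Cauchy--Schwarz for the Hilbert--Schmidt inner product) gives, for every $k\geq 0$,
\begin{equation*}
\left\vert \limfunc{tr}\left( \left( A^{\ast }B\right) ^{k}\right) \right\vert =\left\vert \limfunc{tr}\left( \left( A^{k}\right) ^{\ast }B^{k}\right) \right\vert \leq \left[ \limfunc{tr}\left( \left( \left\vert A\right\vert ^{2}\right) ^{k}\right) \right] ^{1/2}\left[ \limfunc{tr}\left( \left( \left\vert B\right\vert ^{2}\right) ^{k}\right) \right] ^{1/2}.
\end{equation*}

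Now, exactly as in (\ref{e.3.7.2})--(\ref{e.3.7.4}), I would multiply by $\left\vert \alpha _{k}\right\vert $, sum over $k=0,\dots ,n$, use the triangle inequality for the modulus and then the weighted Cauchy--Bunyakovsky--Schwarz inequality for finite sums to obtain, for every $n\geq 1$,
\begin{equation*}
\left\vert \limfunc{tr}\left( \sum_{k=0}^{n}\alpha _{k}\left( A^{\ast }B\right) ^{k}\right) \right\vert ^{2}\leq \limfunc{tr}\left( \sum_{k=0}^{n}\left\vert \alpha _{k}\right\vert \left( \left\vert A\right\vert ^{2}\right) ^{k}\right) \limfunc{tr}\left( \sum_{k=0}^{n}\left\vert \alpha _{k}\right\vert \left( \left\vert B\right\vert ^{2}\right) ^{k}\right) .
\end{equation*}
Because $\left\vert A\right\vert ^{2}=A^{\ast }A\in\mathcal{B}_{1}\left( H\right) $ is positive, $\left\Vert \left\vert A\right\vert ^{2}\right\Vert \leq \limfunc{tr}\left( \left\vert A\right\vert ^{2}\right) $, hence $\limfunc{tr}\left( \left( \left\vert A\right\vert ^{2}\right) ^{k}\right) \leq \left[ \limfunc{tr}\left( \left\vert A\right\vert ^{2}\right) \right] ^{k}$, and since $\limfunc{tr}\left( \left\vert A\right\vert ^{2}\right) <R$ the series $\sum_{k}\left\vert \alpha _{k}\right\vert \left( \left\vert A\right\vert ^{2}\right) ^{k}$ converges absolutely in the Banach space $\mathcal{B}_{1}\left( H\right) $ to $f_{a}\left( \left\vert A\right\vert ^{2}\right) $; by continuity of $\limfunc{tr}$ on $\mathcal{B}_{1}\left( H\right) $ its trace equals $\sum_{k}\left\vert \alpha _{k}\right\vert \limfunc{tr}\left( \left( \left\vert A\right\vert ^{2}\right) ^{k}\right) =\limfunc{tr}\left( f_{a}\left( \left\vert A\right\vert ^{2}\right) \right) $, and similarly for $B$. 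For the left-hand side, using the H\"{o}lder-type estimate $\left\Vert XY\right\Vert _{1}\leq \left\Vert X\right\Vert _{2}\left\Vert Y\right\Vert _{2}$ for $X,Y\in\mathcal{B}_{2}\left( H\right) $ (which follows from Theorem~\ref{t.1.2}(iv) and (\ref{e.1.4.b})) we get $\left\Vert \left( A^{\ast }B\right) ^{k}\right\Vert _{1}=\left\Vert \left( A^{k}\right) ^{\ast }B^{k}\right\Vert _{1}\leq \left\Vert A^{k}\right\Vert _{2}\left\Vert B^{k}\right\Vert _{2}\leq \left[ \limfunc{tr}\left( \left\vert A\right\vert ^{2}\right) \limfunc{tr}\left( \left\vert B\right\vert ^{2}\right) \right] ^{k/2}$, and since $\limfunc{tr}\left( \left\vert A\right\vert ^{2}\right) \limfunc{tr}\left( \left\vert B\right\vert ^{2}\right) <R^{2}$ the operator series $\sum_{k}\alpha _{k}\left( A^{\ast }B\right) ^{k}$ converges in $\mathcal{B}_{1}\left( H\right) $ to $f\left( A^{\ast }B\right) $. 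Letting $n\rightarrow \infty $ in the displayed inequality and invoking the continuity of the trace then yields (\ref{e.3.8}).

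The only genuinely delicate point is this convergence bookkeeping: one must check that all three power series — $f_{a}\left( \left\vert A\right\vert ^{2}\right) $, $f_{a}\left( \left\vert B\right\vert ^{2}\right) $ and $f\left( A^{\ast }B\right) $ — actually converge in the appropriate trace ideal, which is where the hypotheses $\limfunc{tr}\left( \left\vert A\right\vert ^{2}\right) ,\limfunc{tr}\left( \left\vert B\right\vert ^{2}\right) <R$ are used (they in particular force $\limfunc{tr}\left( \left\vert A\right\vert ^{2}\right) \limfunc{tr}\left( \left\vert B\right\vert ^{2}\right) <R^{2}$). Everything else is routine once the two algebraic identities $\left( A^{\ast }B\right) ^{k}=\left( A^{k}\right) ^{\ast }B^{k}$ and $\left\vert A^{k}\right\vert ^{2}=\left( \left\vert A\right\vert ^{2}\right) ^{k}$ are in hand, and these are precisely the places where $A^{\ast }B=BA^{\ast }$ and the normality of $A$ and $B$ enter.
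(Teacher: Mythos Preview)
Your proof is correct and follows essentially the same route as the paper's: termwise Cauchy--Schwarz (the paper packages this as inequality (\ref{e.3.1}) from Proposition~\ref{p.3.1}, which you unfold inline), the two algebraic identities $\left( A^{\ast }B\right) ^{k}=\left( A^{\ast }\right) ^{k}B^{k}$ and $\left\vert A^{k}\right\vert ^{2}=\left\vert A\right\vert ^{2k}$ coming from the commutation hypothesis and normality, and passage to the limit via continuity of the trace. Your convergence bookkeeping is in fact more carefully justified than the paper's.
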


\begin{proof}
From the inequality (\ref{e.3.1}) we have%
\begin{equation}
\left\vert \limfunc{tr}\left( \sum_{k=0}^{n}\alpha _{k}\left( A^{\ast
}\right) ^{k}B^{k}\right) \right\vert ^{2}\leq \limfunc{tr}\left(
\sum_{k=0}^{n}\left\vert \alpha _{k}\right\vert \left\vert A^{k}\right\vert
^{2}\right) \limfunc{tr}\left( \sum_{k=0}^{n}\left\vert \alpha
_{k}\right\vert \left\vert B^{k}\right\vert ^{2}\right) .  \label{e.3.9}
\end{equation}%
Since $A,$ $B$ are normal operators, then we have $\left\vert
A^{k}\right\vert ^{2}=\left\vert A\right\vert ^{2k}$ and $\left\vert
B^{k}\right\vert ^{2}=\left\vert B\right\vert ^{2k}$ for any $k\geq 0.$
Also, since $A^{\ast }B=BA^{\ast }$ then we also have $\left( A^{\ast
}\right) ^{k}B^{k}=\left( A^{\ast }B\right) ^{k}$ for any $k\geq 0.$

Due to the fact that $A,$ $B\in \mathcal{B}_{2}\left( H\right) $ and $%
\limfunc{tr}\left( \left\vert A\right\vert ^{2}\right) ,$ $\limfunc{tr}%
\left( \left\vert B\right\vert ^{2}\right) <R,$ it follows that $\limfunc{tr}%
\left( A^{\ast }B\right) \leq R$ and the operator series 
\begin{equation*}
\sum_{k=0}^{\infty }\alpha _{k}\left( A^{\ast }B\right) ^{k},\text{ }%
\sum_{k=0}^{\infty }\left\vert \alpha _{k}\right\vert \left\vert
A\right\vert ^{2k}\text{ and }\sum_{k=0}^{\infty }\left\vert \alpha
_{k}\right\vert \left\vert B\right\vert ^{2k}
\end{equation*}%
are convergent in the Banach space $\mathcal{B}_{1}\left( H\right) .$

Taking the limit over $n\rightarrow \infty $ in (\ref{e.3.9}) and using the
continuity of the $\limfunc{tr}\left( \cdot \right) $ on $\mathcal{B}%
_{1}\left( H\right) $ we deduce the desired result (\ref{e.3.8}).
\end{proof}

\begin{example}
\label{Ex.3.1} a) If we take in (\ref{e.3.8}) $f(\lambda )=\left( 1\pm
\lambda \right) ^{-1}$, $\left\vert \lambda \right\vert <1$ then we get the
inequality%
\begin{equation}
\left\vert \limfunc{tr}\left( \left( 1_{H}\pm A^{\ast }B\right) ^{-1}\right)
\right\vert ^{2}\leq \limfunc{tr}\left( \left( 1-\left\vert A\right\vert
^{2}\right) ^{-1}\right) \limfunc{tr}\left( \left( 1-\left\vert B\right\vert
^{2}\right) ^{-1}\right)  \label{e.3.10}
\end{equation}%
for any $A,$ $B\in \mathcal{B}_{2}\left( H\right) $ normal operators with $%
A^{\ast }B=BA^{\ast }$ and $\limfunc{tr}\left( \left\vert A\right\vert
^{2}\right) ,$ $\limfunc{tr}\left( \left\vert B\right\vert ^{2}\right) <1.$

b) If we take in (\ref{e.3.8}) $f(\lambda )=\exp \left( \lambda \right) $, $%
\lambda \in \mathbb{C}$ then we get the inequality%
\begin{equation}
\left\vert \limfunc{tr}\left( \exp \left( A^{\ast }B\right) \right)
\right\vert ^{2}\leq \limfunc{tr}\left( \exp \left( \left\vert A\right\vert
^{2}\right) \right) \limfunc{tr}\left( \exp \left( \left\vert B\right\vert
^{2}\right) \right)  \label{e.3.11}
\end{equation}%
for any $A,$ $B\in \mathcal{B}_{2}\left( H\right) $ normal operators with $%
A^{\ast }B=BA^{\ast }.$
\end{example}

\begin{theorem}
\label{t.3.2}Let $f\left( z\right) :=\sum_{j=0}^{\infty }p_{j}z^{j}$ and $%
g\left( z\right) :=\sum_{j=0}^{\infty }q_{j}z^{j}$ be two power series with
nonnegative coefficients and convergent on the open disk $D\left( 0,R\right)
,$ $R>0.$ If $T$ and $V$ are two normal and commuting operators from $%
\mathcal{B}_{2}\left( H\right) $ with $\limfunc{tr}\left( \left\vert
T\right\vert ^{2}\right) ,$ $\limfunc{tr}\left( \left\vert V\right\vert
^{2}\right) <R,$ then%
\begin{align}
& \left[ \limfunc{tr}\left( f\left( \left\vert T\right\vert ^{2}\right)
+g\left( \left\vert T\right\vert ^{2}\right) \right) \right] ^{1/2}\left[ 
\limfunc{tr}\left( f\left( \left\vert V\right\vert ^{2}\right) +g\left(
\left\vert V\right\vert ^{2}\right) \right) \right] ^{1/2}  \label{e.3.12} \\
& -\left\vert \limfunc{tr}\left( f\left( T^{\ast }V\right) +g\left( T^{\ast
}V\right) \right) \right\vert  \notag \\
& \geq \left[ \limfunc{tr}\left( f\left( \left\vert T\right\vert ^{2}\right)
\right) \right] ^{1/2}\left[ \limfunc{tr}\left( f\left( \left\vert
V\right\vert ^{2}\right) \right) \right] ^{1/2}-\left\vert \limfunc{tr}%
\left( f\left( T^{\ast }V\right) \right) \right\vert  \notag \\
& +\left[ \limfunc{tr}\left( g\left( \left\vert T\right\vert ^{2}\right)
\right) \right] ^{1/2}\left[ \limfunc{tr}\left( g\left( \left\vert
V\right\vert ^{2}\right) \right) \right] ^{1/2}-\left\vert \limfunc{tr}%
\left( g\left( T^{\ast }V\right) \right) \right\vert \left( \geq 0\right) . 
\notag
\end{align}

Moreover, if $p_{j}\geq q_{j}$ for any $j\in \mathbb{N}$, then, with the
above assumptions on $T$ and $V,$ we have 
\begin{align}
& \left[ \limfunc{tr}\left( f\left( \left\vert T\right\vert ^{2}\right)
\right) \right] ^{1/2}\left[ \limfunc{tr}\left( f\left( \left\vert
V\right\vert ^{2}\right) \right) \right] ^{1/2}-\left\vert \limfunc{tr}%
\left( f\left( T^{\ast }V\right) \right) \right\vert  \label{e.3.13} \\
& \geq \left[ \limfunc{tr}\left( g\left( \left\vert T\right\vert ^{2}\right)
\right) \right] ^{1/2}\left[ \limfunc{tr}\left( g\left( \left\vert
V\right\vert ^{2}\right) \right) \right] ^{1/2}-\left\vert \limfunc{tr}%
\left( g\left( T^{\ast }V\right) \right) \right\vert \left( \geq 0\right) . 
\notag
\end{align}
\end{theorem}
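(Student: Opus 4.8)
The plan is to recognise both displayed inequalities as nothing but the superadditivity and the monotonicity of the finite-sequence functional $\sigma _{\mathbf{A},\mathbf{B}}$ from Proposition \ref{p.3.2}, evaluated at the truncations of the weight sequences of $f$ and $g$, and then to pass to the limit. First I would fix $n\geq 1$, put $f_{n}\left( z\right) :=\sum_{j=0}^{n}p_{j}z^{j}$ and $g_{n}\left( z\right) :=\sum_{j=0}^{n}q_{j}z^{j}$, and form the $\left( n+1\right) $-tuples $\mathbf{A}=\left( 1_{H},T,T^{2},\ldots ,T^{n}\right) $ and $\mathbf{B}=\left( 1_{H},V,V^{2},\ldots ,V^{n}\right) $ together with the weight vectors $\mathbf{p}=\left( p_{0},\ldots ,p_{n}\right) $ and $\mathbf{q}=\left( q_{0},\ldots ,q_{n}\right) $ in the notation of Section 4, the $j=0$ terms being handled exactly as in the proof of Theorem \ref{t.2.4}. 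Since $T$ and $V$ are normal, $\left\vert T^{j}\right\vert ^{2}=\left\vert T\right\vert ^{2j}$ and $\left\vert V^{j}\right\vert ^{2}=\left\vert V\right\vert ^{2j}$ for all $j$; and since $T,V$ are normal and commuting, Fuglede's theorem gives $T^{\ast }V=VT^{\ast }$, whence $\left( T^{\ast }\right) ^{j}V^{j}=\left( T^{\ast }V\right) ^{j}$. Substituting these identities into (\ref{e.3.6}) shows that
\[
\sigma _{\mathbf{A},\mathbf{B}}\left( \mathbf{p}\right) =\left[ \limfunc{tr}\left( f_{n}\left( \left\vert T\right\vert ^{2}\right) \right) \right] ^{1/2}\left[ \limfunc{tr}\left( f_{n}\left( \left\vert V\right\vert ^{2}\right) \right) \right] ^{1/2}-\left\vert \limfunc{tr}\left( f_{n}\left( T^{\ast }V\right) \right) \right\vert ,
\]
that $\sigma _{\mathbf{A},\mathbf{B}}\left( \mathbf{q}\right) $ is the analogous quantity with $g_{n}$ in place of $f_{n}$, and that $\sigma _{\mathbf{A},\mathbf{B}}\left( \mathbf{p}+\mathbf{q}\right) $ is the one with $f_{n}+g_{n}=\left( f+g\right) _{n}$.

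With this dictionary, Proposition \ref{p.3.2}(i) gives $\sigma _{\mathbf{A},\mathbf{B}}\left( \mathbf{p}+\mathbf{q}\right) \geq \sigma _{\mathbf{A},\mathbf{B}}\left( \mathbf{p}\right) +\sigma _{\mathbf{A},\mathbf{B}}\left( \mathbf{q}\right) \geq 0$, which is precisely (\ref{e.3.12}) written for the truncations $f_{n},g_{n}$; and, when $p_{j}\geq q_{j}$ for every $j$, one has $\mathbf{p}\geq \mathbf{q}$ in the order of Section 4, so Proposition \ref{p.3.2}(ii) gives $\sigma _{\mathbf{A},\mathbf{B}}\left( \mathbf{p}\right) \geq \sigma _{\mathbf{A},\mathbf{B}}\left( \mathbf{q}\right) \geq 0$, which is (\ref{e.3.13}) for $f_{n},g_{n}$. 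It then remains to let $n\rightarrow \infty $. Here I would use that $T,V\in \mathcal{B}_{2}\left( H\right) $ forces $\left\vert T\right\vert ^{2}=T^{\ast }T\in \mathcal{B}_{1}\left( H\right) $ and $T^{\ast }V\in \mathcal{B}_{1}\left( H\right) $ with $\left\Vert T^{\ast }V\right\Vert \leq \left\Vert T^{\ast }V\right\Vert _{1}\leq \left\Vert T\right\Vert _{2}\left\Vert V\right\Vert _{2}=\left( \limfunc{tr}\left\vert T\right\vert ^{2}\right) ^{1/2}\left( \limfunc{tr}\left\vert V\right\vert ^{2}\right) ^{1/2}<R$; then, by submultiplicativity of the trace, $\limfunc{tr}\left( \left\vert T\right\vert ^{2j}\right) \leq \left( \limfunc{tr}\left\vert T\right\vert ^{2}\right) ^{j}<R^{j}$ (similarly for $V$) and $\left\Vert \left( T^{\ast }V\right) ^{j}\right\Vert _{1}\leq \left\Vert T^{\ast }V\right\Vert ^{j-1}\left\Vert T^{\ast }V\right\Vert _{1}$, so the relevant numerical series converge and the operator series $\sum_{j\geq 0}p_{j}\left\vert T\right\vert ^{2j}$, $\sum_{j\geq 0}p_{j}\left\vert V\right\vert ^{2j}$, $\sum_{j\geq 0}p_{j}\left( T^{\ast }V\right) ^{j}$ (and the three series built from $g$) converge in $\left( \mathcal{B}_{1}\left( H\right) ,\left\Vert \cdot \right\Vert _{1}\right) $. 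Using the continuity of $\limfunc{tr}\left( \cdot \right) $ on $\mathcal{B}_{1}\left( H\right) $ (Theorem \ref{t.3.1}) and letting $n\rightarrow \infty $ in the truncated forms of (\ref{e.3.12}) and (\ref{e.3.13}) then yields both assertions.

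The hard part is not the algebra: once the two sides are identified as $\sigma _{\mathbf{A},\mathbf{B}}$ evaluated at $\mathbf{p}+\mathbf{q}$, $\mathbf{p}$ and $\mathbf{q}$, the inequalities drop out of Proposition \ref{p.3.2}. The delicate point is the passage to the limit — one must guarantee that every trace occurring on either side of (\ref{e.3.12}) and (\ref{e.3.13}) is finite, i.e. that $f\left( T^{\ast }V\right) $, $f_{a}\left( \left\vert T\right\vert ^{2}\right) $ and their $g$-analogues all lie in $\mathcal{B}_{1}\left( H\right) $, and that the corresponding operator series converge in trace norm, so that the continuity of $\limfunc{tr}$ legitimises interchanging limit and trace. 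This is exactly where the hypotheses $\limfunc{tr}\left( \left\vert T\right\vert ^{2}\right) ,\limfunc{tr}\left( \left\vert V\right\vert ^{2}\right) <R$ and the normality and commutativity of $T$ and $V$ are used essentially; the rest is a transcription of the scheme already used for Theorem \ref{t.2.4}.
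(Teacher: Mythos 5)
Your proposal is correct and follows essentially the same route as the paper: apply the superadditivity and monotonicity of the scalar-weighted functional $\sigma_{\mathbf{A},\mathbf{B}}(\mathbf{p})$ from Section 4 to the tuples of powers of $T$ and $V$ with the truncated coefficient sequences, then let $n\rightarrow\infty$ using convergence of the operator series in $\mathcal{B}_{1}\left( H\right)$ and the continuity of $\limfunc{tr}$. Your explicit justification of $\left( T^{\ast}\right)^{j}V^{j}=\left( T^{\ast}V\right)^{j}$ via Fuglede's theorem and your trace-norm convergence estimates simply make precise details that the paper leaves implicit.
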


\begin{proof}
Utilising the superadditivity property of the functional $\sigma _{\mathbf{A}%
,\mathbf{B}}\left( \mathbf{\cdot }\right) $ above as a function of weights $%
\mathbf{p}$ and the fact that $T$ and $V$ are two normal and commuting
operators we can state that%
\begin{align}
& \left[ \limfunc{tr}\left( \sum_{k=0}^{n}\left( p_{k}+q_{k}\right)
\left\vert T\right\vert ^{2k}\right) \right] ^{1/2}\left[ \limfunc{tr}\left(
\sum_{k=0}^{n}\left( p_{k}+q_{k}\right) \left\vert V\right\vert ^{2k}\right) %
\right] ^{1/2}  \label{e.3.14} \\
& -\left\vert \limfunc{tr}\left( \sum_{k=0}^{n}\left( p_{k}+q_{k}\right)
\left( T^{\ast }V\right) ^{k}\right) \right\vert  \notag \\
& \geq \left[ \limfunc{tr}\left( \sum_{k=0}^{n}p_{k}\left\vert T\right\vert
^{2k}\right) \right] ^{1/2}\left[ \limfunc{tr}\left(
\sum_{k=0}^{n}p_{k}\left\vert V\right\vert ^{2k}\right) \right]
^{1/2}-\left\vert \limfunc{tr}\left( \sum_{k=0}^{n}p_{k}\left( T^{\ast
}V\right) ^{k}\right) \right\vert  \notag \\
& +\left[ \limfunc{tr}\left( \sum_{k=0}^{n}q_{k}\left\vert T\right\vert
^{2k}\right) \right] ^{1/2}\left[ \limfunc{tr}\left(
\sum_{k=0}^{n}q_{k}\left\vert V\right\vert ^{2k}\right) \right]
^{1/2}-\left\vert \limfunc{tr}\left( \sum_{k=0}^{n}q_{k}\left( T^{\ast
}V\right) ^{k}\right) \right\vert  \notag
\end{align}%
for any $n\geq 1.$

Since all the series whose partial sums are involved in (\ref{e.3.14}) are
convergent in $\mathcal{B}_{1}\left( H\right) ,$ by letting $n\rightarrow
\infty $ in (\ref{e.3.14}) we get (\ref{e.3.12}).

The inequality (\ref{e.3.13}) follows by the monotonicity property of $%
\sigma _{\mathbf{A},\mathbf{B}}\left( \mathbf{\cdot }\right) $ and the
details are omitted.
\end{proof}

\begin{example}
\label{Ex.3.2}Now, observe that if we take%
\begin{equation*}
f\left( \lambda \right) =\sinh \lambda =\sum_{n=0}^{\infty }\frac{1}{\left(
2n+1\right) !}\lambda ^{2n+1}
\end{equation*}%
and 
\begin{equation*}
g\left( \lambda \right) =\cosh \lambda =\sum_{n=0}^{\infty }\frac{1}{\left(
2n\right) !}\lambda ^{2n}
\end{equation*}%
then 
\begin{equation*}
f\left( \lambda \right) +g\left( \lambda \right) =\exp \lambda
=\sum_{n=0}^{\infty }\frac{1}{n!}\lambda ^{n}
\end{equation*}
for any $\lambda \in \mathbb{C}$.

If $T$ and $V$ are two normal and commuting operators from $\mathcal{B}%
_{2}\left( H\right) $, then by (\ref{e.2.12}) we have%
\begin{align}
& \left[ \limfunc{tr}\left( \exp \left( \left\vert T\right\vert ^{2}\right)
\right) \right] ^{1/2}\left[ \limfunc{tr}\left( \exp \left( \left\vert
V\right\vert ^{2}\right) \right) \right] ^{1/2}-\left\vert \limfunc{tr}%
\left( \exp \left( T^{\ast }V\right) \right) \right\vert  \label{e.3.15} \\
& \geq \left[ \limfunc{tr}\left( \sinh \left( \left\vert T\right\vert
^{2}\right) \right) \right] ^{1/2}\left[ \limfunc{tr}\left( \sinh \left(
\left\vert V\right\vert ^{2}\right) \right) \right] ^{1/2}-\left\vert 
\limfunc{tr}\left( \sinh \left( T^{\ast }V\right) \right) \right\vert  \notag
\\
& +\left[ \limfunc{tr}\left( \cosh \left( \left\vert T\right\vert
^{2}\right) \right) \right] ^{1/2}\left[ \limfunc{tr}\left( \cosh \left(
\left\vert V\right\vert ^{2}\right) \right) \right] ^{1/2}-\left\vert 
\limfunc{tr}\left( \cosh \left( T^{\ast }V\right) \right) \right\vert \left(
\geq 0\right) .  \notag
\end{align}

Now, consider the series $\frac{1}{1-\lambda }=\sum_{n=0}^{\infty }\lambda
^{n},$ $\lambda \in D\left( 0,1\right) $ and $\ln \frac{1}{1-\lambda }%
=\sum_{n=1}^{\infty }\frac{1}{n}\lambda ^{n},$ $\lambda \in D\left(
0,1\right) $ and define $p_{n}=1,$ $n\geq 0,$ $q_{0}=0,$ $q_{n}=\frac{1}{n},$
$n\geq 1,$ then we observe that for any $n\geq 0$ we have $p_{n}\geq q_{n}.$

If $T$ and $V$ are two normal and commuting operators from $\mathcal{B}%
_{2}\left( H\right) $ with $\limfunc{tr}\left( \left\vert T\right\vert
^{2}\right) ,$ $\limfunc{tr}\left( \left\vert V\right\vert ^{2}\right) <1,$
then by (\ref{e.2.13}) we have%
\begin{align}
& \left[ \limfunc{tr}\left( \left( 1_{H}-\left\vert T\right\vert ^{2}\right)
^{-1}\right) \right] ^{1/2}\left[ \limfunc{tr}\left( \left( 1_{H}-\left\vert
V\right\vert ^{2}\right) ^{-1}\right) \right] ^{1/2}-\left\vert \limfunc{tr}%
\left( \left( 1_{H}-T^{\ast }V\right) ^{-1}\right) \right\vert
\label{e.3.16} \\
& \geq \left[ \limfunc{tr}\left( \ln \left( 1_{H}-\left\vert T\right\vert
^{2}\right) ^{-1}\right) \right] ^{1/2}\left[ \limfunc{tr}\left( \ln \left(
1_{H}-\left\vert V\right\vert ^{2}\right) ^{-1}\right) \right] ^{1/2}  \notag
\\
& -\left\vert \limfunc{tr}\left( \ln \left( 1_{H}-T^{\ast }V\right)
^{-1}\right) \right\vert \left( \geq 0\right) .  \notag
\end{align}
\end{example}

\section{Inequalities for Matrices}

We have the following result for matrices.

\begin{proposition}
\label{p.4.1}Let $f(\lambda ):=\sum_{n=0}^{\infty }\alpha _{n}\lambda ^{n}$
be a power series with complex coefficients and convergent on the open disk $%
D\left( 0,R\right) ,$ $R>0.$ If $A$ and $B$ are positive semidefinite
matrices in $M_{n}\left( \mathbb{C}\right) $ with $\limfunc{tr}\left(
A^{2}\right) ,$ $\limfunc{tr}\left( B^{2}\right) <R,$ then we have the
inequality%
\begin{equation}
\left\vert \limfunc{tr}\left[ f(AB)\right] \right\vert ^{2}\leq \limfunc{tr}%
\left[ f_{a}\left( A^{2}\right) \right] \limfunc{tr}\left[ f_{a}\left(
B^{2}\right) \right] .  \label{e.4.1}
\end{equation}%
If $\limfunc{tr}\left( A\right) ,$ $\limfunc{tr}\left( B\right) <\sqrt{R},$
then also 
\begin{equation}
\left\vert \limfunc{tr}\left[ f(AB)\right] \right\vert \leq \min \left\{ 
\limfunc{tr}\left( f_{a}\left( \left\Vert A\right\Vert B\right) \right) ,%
\limfunc{tr}\left( f_{a}\left( \left\Vert B\right\Vert A\right) \right)
\right\} .  \label{e.4.2}
\end{equation}
\end{proposition}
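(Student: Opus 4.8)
The plan is to mimic the proofs of Proposition \ref{p.3.3} and Theorem \ref{t.2.4}, but now feeding into the same machinery the known matrix trace inequalities (\ref{e.1.11}) and (\ref{e.1.12}) recorded in the Introduction in place of (\ref{e.1.10}) and (\ref{e.3.1}). Throughout I would use that, since $A$ and $B$ are positive semidefinite, $AB$ is similar to the positive semidefinite matrix $A^{1/2}BA^{1/2}$, so that $\limfunc{tr}\left[ (AB)^{k}\right] =\limfunc{tr}\left[ (A^{1/2}BA^{1/2})^{k}\right] \geq 0$ for every $k$, and the spectral radius of $AB$ equals that of $A^{1/2}BA^{1/2}$, hence is at most $\left\Vert A\right\Vert \left\Vert B\right\Vert$. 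I would also use the elementary bounds $\left\Vert A\right\Vert \leq \limfunc{tr}\left( A\right)$ and $\left\Vert A\right\Vert ^{2}\leq \limfunc{tr}\left( A^{2}\right)$, valid for any positive semidefinite $A\in M_{n}\left( \mathbb{C}\right)$ (the largest eigenvalue is dominated by the sum of the eigenvalues, resp. of their squares).

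For (\ref{e.4.1}) I would first fix $n\geq 1$. By the triangle inequality for the modulus, the positivity of $\limfunc{tr}\left[ (AB)^{k}\right]$, and (\ref{e.1.11}),
\[ \left\vert \limfunc{tr}\left[ \sum_{k=0}^{n}\alpha _{k}(AB)^{k}\right] \right\vert \leq \sum_{k=0}^{n}\left\vert \alpha _{k}\right\vert \limfunc{tr}\left[ (AB)^{k}\right] \leq \sum_{k=0}^{n}\left\vert \alpha _{k}\right\vert \left[ \limfunc{tr}\left( A^{2k}\right) \limfunc{tr}\left( B^{2k}\right) \right] ^{1/2}. \]
Writing the last term as $\sum_{k=0}^{n}\left( \left\vert \alpha _{k}\right\vert ^{1/2}\left[ \limfunc{tr}\left( A^{2k}\right) \right] ^{1/2}\right) \left( \left\vert \alpha _{k}\right\vert ^{1/2}\left[ \limfunc{tr}\left( B^{2k}\right) \right] ^{1/2}\right)$ and applying the weighted Cauchy-Bunyakovsky-Schwarz inequality, together with $\limfunc{tr}\left( A^{2k}\right) =\limfunc{tr}\left[ (A^{2})^{k}\right]$, bounds this by $\left( \limfunc{tr}\left[ \sum_{k=0}^{n}\left\vert \alpha _{k}\right\vert (A^{2})^{k}\right] \right) ^{1/2}\left( \limfunc{tr}\left[ \sum_{k=0}^{n}\left\vert \alpha _{k}\right\vert (B^{2})^{k}\right] \right) ^{1/2}$. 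Since $\left\Vert A^{2}\right\Vert \leq \limfunc{tr}\left( A^{2}\right) <R$ and likewise for $B$, the operator series $\sum_{k}\left\vert \alpha _{k}\right\vert (A^{2})^{k}$ and $\sum_{k}\left\vert \alpha _{k}\right\vert (B^{2})^{k}$ converge in $M_{n}\left( \mathbb{C}\right)$ to $f_{a}\left( A^{2}\right)$ and $f_{a}\left( B^{2}\right)$; and since $\left\Vert A\right\Vert ^{2}\leq \limfunc{tr}\left( A^{2}\right) <R$ and $\left\Vert B\right\Vert ^{2}\leq \limfunc{tr}\left( B^{2}\right) <R$, the spectral radius of $AB$ is $\leq \left\Vert A\right\Vert \left\Vert B\right\Vert <R$, so $\sum_{k}\alpha _{k}(AB)^{k}$ converges to $f\left( AB\right)$. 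Letting $n\rightarrow \infty$ and using continuity of $\limfunc{tr}\left( \cdot \right)$ on $M_{n}\left( \mathbb{C}\right)$ then gives (\ref{e.4.1}).

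For (\ref{e.4.2}) I would run the same argument with (\ref{e.1.12}) in place of (\ref{e.1.11}): from $\limfunc{tr}\left[ (AB)^{k}\right] \leq \left\Vert A\right\Vert ^{k}\limfunc{tr}\left( B^{k}\right)$ one gets $\left\vert \limfunc{tr}\left[ \sum_{k=0}^{n}\alpha _{k}(AB)^{k}\right] \right\vert \leq \sum_{k=0}^{n}\left\vert \alpha _{k}\right\vert \left\Vert A\right\Vert ^{k}\limfunc{tr}\left( B^{k}\right) =\limfunc{tr}\left[ \sum_{k=0}^{n}\left\vert \alpha _{k}\right\vert \left( \left\Vert A\right\Vert B\right) ^{k}\right]$, and symmetrically with $A$ and $B$ interchanged. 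Under the hypothesis $\limfunc{tr}\left( A\right) ,\limfunc{tr}\left( B\right) <\sqrt{R}$ one has $\left\Vert A\right\Vert \leq \limfunc{tr}\left( A\right)$ and $\left\Vert B\right\Vert \leq \limfunc{tr}\left( B\right)$, hence $\left\Vert A\right\Vert \left\Vert B\right\Vert <R$ and also $\left\Vert \left\Vert A\right\Vert B\right\Vert =\left\Vert A\right\Vert \left\Vert B\right\Vert <R$, so $f_{a}\left( \left\Vert A\right\Vert B\right)$, $f_{a}\left( \left\Vert B\right\Vert A\right)$ and $f\left( AB\right)$ are all well defined; letting $n\rightarrow \infty$, invoking trace continuity, and taking the minimum of the two resulting bounds yields (\ref{e.4.2}).

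The computations above are routine; the one point that needs care — and is really the crux — will be verifying that the scalar hypotheses on $\limfunc{tr}\left( A^{2}\right) ,\limfunc{tr}\left( B^{2}\right)$ (resp. on $\limfunc{tr}\left( A\right) ,\limfunc{tr}\left( B\right)$) are strong enough to place $A^{2},B^{2}$ (resp. $\left\Vert A\right\Vert B,\left\Vert B\right\Vert A$) \emph{and} the non-normal product $AB$ simultaneously inside the disk of convergence $D\left( 0,R\right)$, since otherwise none of the series in play is guaranteed to converge. This is exactly where positive semidefiniteness enters, through the bounds $\left\Vert A\right\Vert \leq \limfunc{tr}\left( A\right)$, $\left\Vert A\right\Vert ^{2}\leq \limfunc{tr}\left( A^{2}\right)$ and through the similarity of $AB$ to the positive semidefinite matrix $A^{1/2}BA^{1/2}$.
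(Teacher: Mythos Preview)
Your proposal is correct and follows essentially the same route as the paper: for (\ref{e.4.1}) apply the triangle inequality, then (\ref{e.1.11}), then the weighted Cauchy--Bunyakovsky--Schwarz inequality, and pass to the limit; for (\ref{e.4.2}) the paper simply says ``follows from (\ref{e.1.12}) in a similar way and the details are omitted,'' which is exactly the computation you wrote out. Your convergence justification via the similarity of $AB$ to $A^{1/2}BA^{1/2}$ and the spectral-radius bound is in fact more careful than the paper's, which only records $\limfunc{tr}(AB)\leq \sqrt{\limfunc{tr}(A^{2})\limfunc{tr}(B^{2})}<R$ without spelling out why this controls the spectral radius of $AB$.
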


\begin{proof}
We observe that (\ref{e.1.11}) holds for $m=0$ with equality.

By utilizing the generalized triangle inequality for the modulus and the
inequality (\ref{e.1.11}) we have%
\begin{align}
& \left\vert \limfunc{tr}\left[ \sum_{n=0}^{m}\alpha _{n}(AB)^{n}\right]
\right\vert  \label{e.4.3} \\
& =\left\vert \sum_{n=0}^{m}\alpha _{n}\limfunc{tr}\left[ (AB)^{n}\right]
\right\vert \leq \sum_{n=0}^{m}\left\vert \alpha _{n}\right\vert \left\vert 
\limfunc{tr}\left[ (AB)^{n}\right] \right\vert  \notag \\
& =\sum_{n=0}^{m}\left\vert \alpha _{n}\right\vert \limfunc{tr}\left[
(AB)^{n}\right] \leq \sum_{n=0}^{m}\left\vert \alpha _{n}\right\vert \left[ 
\limfunc{tr}\left( A^{2n}\right) \right] ^{1/2}\left[ \limfunc{tr}\left(
B^{2n}\right) \right] ^{1/2},  \notag
\end{align}%
for any $m\geq 1.$

Applying the weighted Cauchy-Bunyakowsky-Schwarz discrete inequality we also
have%
\begin{align}
& \sum_{n=0}^{m}\left\vert \alpha _{n}\right\vert \left[ \limfunc{tr}\left(
A^{2n}\right) \right] ^{1/2}\left[ \limfunc{tr}\left( B^{2n}\right) \right]
^{1/2}  \label{e.4.4} \\
& \leq \left( \sum_{n=0}^{m}\left\vert \alpha _{n}\right\vert \left( \left[ 
\limfunc{tr}\left( A^{2n}\right) \right] ^{1/2}\right) ^{2}\right)
^{1/2}\left( \sum_{n=0}^{m}\left\vert \alpha _{n}\right\vert \left( \left[ 
\limfunc{tr}\left( B^{2n}\right) \right] ^{1/2}\right) ^{2}\right) ^{1/2} 
\notag \\
& =\left( \sum_{n=0}^{m}\left\vert \alpha _{n}\right\vert \left[ \limfunc{tr}%
\left( A^{2n}\right) \right] \right) ^{1/2}\left( \sum_{n=0}^{m}\left\vert
\alpha _{n}\right\vert \left[ \limfunc{tr}\left( B^{2n}\right) \right]
\right) ^{1/2}  \notag \\
& =\left[ \limfunc{tr}\left( \sum_{n=0}^{m}\left\vert \alpha _{n}\right\vert
A^{2n}\right) \right] ^{1/2}\left[ \limfunc{tr}\left(
\sum_{n=0}^{m}\left\vert \alpha _{n}\right\vert B^{2n}\right) \right] ^{1/2}
\notag
\end{align}%
for any $m\geq 1.$

Therefore, by (\ref{e.4.3}) and (\ref{e.4.4}) we get%
\begin{equation}
\left\vert \limfunc{tr}\left[ \sum_{n=0}^{m}\alpha _{n}(AB)^{n}\right]
\right\vert ^{2}\leq \limfunc{tr}\left( \sum_{n=0}^{m}\left\vert \alpha
_{n}\right\vert A^{2n}\right) \limfunc{tr}\left( \sum_{n=0}^{m}\left\vert
\alpha _{n}\right\vert B^{2n}\right)  \label{e.4.5}
\end{equation}%
for any $m\geq 1.$

Since $\limfunc{tr}\left( A^{2}\right) ,\limfunc{tr}\left( B^{2}\right) <R,$
then $\limfunc{tr}\left( AB\right) \leq \sqrt{\limfunc{tr}\left(
A^{2}\right) \limfunc{tr}\left( B^{2}\right) }<R$ and the series 
\begin{equation*}
\sum_{n=0}^{\infty }\alpha _{n}(AB)^{n},\text{ }\sum_{n=0}^{\infty
}\left\vert \alpha _{n}\right\vert A^{2n}\text{ and }\sum_{n=0}^{\infty
}\left\vert \alpha _{n}\right\vert B^{2n}
\end{equation*}%
are convergent in $M_{n}\left( \mathbb{C}\right) .$

Taking the limit over $m\rightarrow \infty $ in (\ref{e.4.5}) and utilizing
the continuity property of $\limfunc{tr}\left( \cdot \right) $ on $%
M_{n}\left( \mathbb{C}\right) $ we get (\ref{e.4.1}).

The inequality (\ref{e.4.2}) follows from (\ref{e.1.12}) in a similar way
and the details are omitted.
\end{proof}

\begin{example}
\label{Ex 4.1}a) If we take $f(\lambda )=\left( 1\pm \lambda \right) ^{-1}$, 
$\left\vert \lambda \right\vert <1$ then we get the inequality%
\begin{equation}
\left\vert \limfunc{tr}\left[ (I_{n}\pm AB)^{-1}\right] \right\vert ^{2}\leq 
\limfunc{tr}\left[ \left( I_{n}-A^{2}\right) ^{-1}\right] \limfunc{tr}\left[
\left( I_{n}-B^{2}\right) ^{-1}\right]  \label{e.4.6}
\end{equation}%
for any $A$ and $B$ positive semidefinite matrices in $M_{n}\left( \mathbb{C}%
\right) $ with $\limfunc{tr}\left( A^{2}\right) ,$ $\limfunc{tr}\left(
B^{2}\right) <1.$ Here $I_{n}$ is the identity matrix in $M_{n}\left( 
\mathbb{C}\right) .$

We also have the inequality%
\begin{equation}
\left\vert \limfunc{tr}\left[ (I_{n}\pm AB)^{-1}\right] \right\vert \leq
\min \left\{ \limfunc{tr}\left( \left( I_{n}-\left\Vert A\right\Vert
B\right) ^{-1}\right) ,\limfunc{tr}\left( \left( I_{n}-\left\Vert
B\right\Vert A\right) ^{-1}\right) \right\}  \label{e.4.7}
\end{equation}%
for any $A$ and $B$ positive semidefinite matrices in $M_{n}\left( \mathbb{C}%
\right) $ with $\limfunc{tr}\left( A\right) ,$ $\limfunc{tr}\left( B\right)
<1.$

b) If we take $f(\lambda )=\exp \lambda ,$ then we have the inequalities%
\begin{equation}
\left( \limfunc{tr}\left[ \exp (AB)\right] \right) ^{2}\leq \limfunc{tr}%
\left[ \exp \left( A^{2}\right) \right] \limfunc{tr}\left[ \exp \left(
B^{2}\right) \right]  \label{e.4.8}
\end{equation}%
and%
\begin{equation}
\limfunc{tr}\left[ \exp (AB)\right] \leq \min \left\{ \limfunc{tr}\left(
\exp \left( \left\Vert A\right\Vert B\right) \right) ,\limfunc{tr}\left(
\exp \left( \left\Vert B\right\Vert A\right) \right) \right\}  \label{e.4.9}
\end{equation}%
for any $A$ and $B$ positive semidefinite matrices in $M_{n}\left( \mathbb{C}%
\right) .$
\end{example}

\begin{proposition}
\label{p.4.2}Let $f(\lambda ):=\sum_{n=0}^{\infty }\alpha _{n}\lambda ^{n}$
be a power series with complex coefficients and convergent on the open disk $%
D\left( 0,R\right) ,$ $R>0.$ If $A$ and $B$ are matrices in $M_{n}\left( 
\mathbb{C}\right) $ with $\limfunc{tr}\left( \left\vert A\right\vert
^{p}\right) ,$\ $\limfunc{tr}\left( \left\vert B\right\vert ^{q}\right) <R,$
where\ $p,$ $q>1$ with $\frac{1}{p}+\frac{1}{q}=1,$ then%
\begin{eqnarray}
\left\vert \limfunc{tr}\left( f\left( \left\vert AB^{\ast }\right\vert
\right) \right) \right\vert &\leq &\limfunc{tr}\left[ f_{a}\left( \frac{%
\left\vert A\right\vert ^{p}}{p}+\frac{\left\vert B\right\vert ^{q}}{q}%
\right) \right]  \label{e.4.10} \\
&\leq &\limfunc{tr}\left[ \frac{1}{p}f_{a}\left( \left\vert A\right\vert
^{p}\right) +\frac{1}{q}f_{a}\left( \left\vert B\right\vert ^{q}\right) %
\right] .  \notag
\end{eqnarray}
\end{proposition}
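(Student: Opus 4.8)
The plan is to mirror the proof of Proposition \ref{p.4.1}, using the Young type trace inequality (\ref{e.1.13}) and Ando's inequality (\ref{e.1.13.a}) in place of (\ref{e.1.11}) and (\ref{e.1.12}). Throughout put
\begin{equation*}
C:=\frac{\left\vert A\right\vert ^{p}}{p}+\frac{\left\vert B\right\vert ^{q}}{q},
\end{equation*}
which is a positive semidefinite matrix in $M_{n}\left( \mathbb{C}\right) $. First I would settle the convergence questions. Since $C\geq 0$ we have $\left\Vert C\right\Vert \leq \limfunc{tr}\left( C\right) =\frac{1}{p}\limfunc{tr}\left( \left\vert A\right\vert ^{p}\right) +\frac{1}{q}\limfunc{tr}\left( \left\vert B\right\vert ^{q}\right) <\frac{1}{p}R+\frac{1}{q}R=R$, so the series $f_{a}\left( C\right) =\sum_{n=0}^{\infty }\left\vert \alpha _{n}\right\vert C^{n}$ converges in $M_{n}\left( \mathbb{C}\right) $; likewise $\left\Vert \left\vert A\right\vert ^{p}\right\Vert \leq \limfunc{tr}\left( \left\vert A\right\vert ^{p}\right) <R$ and $\left\Vert \left\vert B\right\vert ^{q}\right\Vert \leq \limfunc{tr}\left( \left\vert B\right\vert ^{q}\right) <R$, so $f_{a}\left( \left\vert A\right\vert ^{p}\right) $ and $f_{a}\left( \left\vert B\right\vert ^{q}\right) $ converge; and by (\ref{e.1.13.a}), $\left\Vert \left\vert AB^{\ast }\right\vert \right\Vert \leq \limfunc{tr}\left( \left\vert AB^{\ast }\right\vert \right) \leq \limfunc{tr}\left( C\right) <R$, so $f\left( \left\vert AB^{\ast }\right\vert \right) =\sum_{n=0}^{\infty }\alpha _{n}\left\vert AB^{\ast }\right\vert ^{n}$ converges as well.

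Next, for a fixed $m\geq 1$, I would combine the generalized triangle inequality for the modulus with (\ref{e.1.13}) applied with $r=n$ for each $n\geq 1$ (the $n=0$ terms being equal on both sides) to obtain
\begin{align*}
\left\vert \limfunc{tr}\left[ \sum_{n=0}^{m}\alpha _{n}\left\vert AB^{\ast }\right\vert ^{n}\right] \right\vert & \leq \sum_{n=0}^{m}\left\vert \alpha _{n}\right\vert \limfunc{tr}\left( \left\vert AB^{\ast }\right\vert ^{n}\right) \\
& \leq \sum_{n=0}^{m}\left\vert \alpha _{n}\right\vert \limfunc{tr}\left( C^{n}\right) =\limfunc{tr}\left[ \sum_{n=0}^{m}\left\vert \alpha _{n}\right\vert C^{n}\right] .
\end{align*}
Letting $m\rightarrow \infty $ and using the continuity of $\limfunc{tr}\left( \cdot \right) $ on $M_{n}\left( \mathbb{C}\right) $ gives the first inequality in (\ref{e.4.10}).

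For the second inequality the key point is that the functional $Z\mapsto \limfunc{tr}\left( Z^{n}\right) $ is convex on the cone of positive semidefinite matrices: indeed, $Z\mapsto \left[ \limfunc{tr}\left( Z^{n}\right) \right] ^{1/n}$ is the Schatten $n$-norm, hence convex, and $t\mapsto t^{n}$ is nondecreasing and convex on $\left[ 0,\infty \right) $, so their composition is convex for every integer $n\geq 1$, while $\limfunc{tr}\left( Z^{0}\right) $ is constant. Evaluating this convexity at the convex combination $C=\frac{1}{p}\left\vert A\right\vert ^{p}+\frac{1}{q}\left\vert B\right\vert ^{q}$ (recall $\frac{1}{p}+\frac{1}{q}=1$) yields $\limfunc{tr}\left( C^{n}\right) \leq \frac{1}{p}\limfunc{tr}\left( \left\vert A\right\vert ^{pn}\right) +\frac{1}{q}\limfunc{tr}\left( \left\vert B\right\vert ^{qn}\right) $; multiplying by $\left\vert \alpha _{n}\right\vert \geq 0$, summing over $0\leq n\leq m$, and letting $m\rightarrow \infty $ produces
\begin{equation*}
\limfunc{tr}\left[ f_{a}\left( C\right) \right] \leq \limfunc{tr}\left[ \frac{1}{p}f_{a}\left( \left\vert A\right\vert ^{p}\right) +\frac{1}{q}f_{a}\left( \left\vert B\right\vert ^{q}\right) \right] ,
\end{equation*}
which is the second inequality in (\ref{e.4.10}).

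The step I expect to be the only genuine obstacle is the convexity of $Z\mapsto \limfunc{tr}\left( Z^{n}\right) $ on positive semidefinite matrices invoked above, as this is the single ingredient not already recorded in the excerpt; everything else is power-series bookkeeping and a limiting argument of exactly the kind carried out in Propositions \ref{p.3.3} and \ref{p.4.1}.
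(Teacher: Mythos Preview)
Your proposal is correct and follows essentially the same route as the paper: apply (\ref{e.1.13}) termwise to obtain the first inequality, then use convexity of $Z\mapsto \limfunc{tr}\left( Z^{n}\right) $ on positive semidefinite matrices to pass to the second, and take limits. The only cosmetic difference is that the paper justifies the convexity step by invoking the general fact that $\limfunc{tr}f\left( \cdot \right) $ is convex on $M_{n}^{+}\left( \mathbb{C}\right) $ whenever $f:[0,\infty )\rightarrow \mathbb{R}$ is convex, whereas you give an ad hoc argument via the Schatten $n$-norm; both yield the same inequality (\ref{e.4.12}).
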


\begin{proof}
The inequality (\ref{e.1.13}) holds with equality for $r=0.$

By utilizing the generalized triangle inequality for the modulus and the
inequality (\ref{e.1.13}) we have%
\begin{align}
\left\vert \limfunc{tr}\left( \sum_{n=0}^{m}\alpha _{n}\left\vert AB^{\ast
}\right\vert ^{n}\right) \right\vert & =\left\vert \sum_{n=0}^{m}\alpha _{n}%
\limfunc{tr}\left( \left\vert AB^{\ast }\right\vert ^{n}\right) \right\vert
\label{e.4.11} \\
& \leq \sum_{n=0}^{m}\left\vert \alpha _{n}\right\vert \left\vert \limfunc{tr%
}\left( \left\vert AB^{\ast }\right\vert ^{n}\right) \right\vert
=\sum_{n=0}^{m}\left\vert \alpha _{n}\right\vert \limfunc{tr}\left(
\left\vert AB^{\ast }\right\vert ^{n}\right)  \notag \\
& \leq \sum_{n=0}^{m}\left\vert \alpha _{n}\right\vert \limfunc{tr}\left[
\left( \frac{\left\vert A\right\vert ^{p}}{p}+\frac{\left\vert B\right\vert
^{q}}{q}\right) ^{n}\right]  \notag \\
& =\limfunc{tr}\left[ \sum_{n=0}^{m}\left\vert \alpha _{n}\right\vert \left( 
\frac{\left\vert A\right\vert ^{p}}{p}+\frac{\left\vert B\right\vert ^{q}}{q}%
\right) ^{n}\right]  \notag
\end{align}%
for any $m\geq 1$ and $p,q>1$ with $\frac{1}{p}+\frac{1}{q}=1.$

It is know that if $f:[0,\infty )\rightarrow \mathbb{R}$ is a convex
function, then $\limfunc{tr}f\left( \cdot \right) $ is convex on the cone $%
M_{n}^{+}\left( \mathbb{C}\right) $ of positive semidefinite matrices in $%
M_{n}\left( \mathbb{C}\right) $. Therefore, for $n\geq 1$ we have%
\begin{equation}
\limfunc{tr}\left[ \left( \frac{\left\vert A\right\vert ^{p}}{p}+\frac{%
\left\vert B\right\vert ^{q}}{q}\right) ^{n}\right] \leq \frac{1}{p}\limfunc{%
tr}\left( \left\vert A\right\vert ^{pn}\right) +\frac{1}{q}\limfunc{tr}%
\left( \left\vert B\right\vert ^{qn}\right)  \label{e.4.12}
\end{equation}%
where $p,q>1$ with $\frac{1}{p}+\frac{1}{q}=1.$

The inequality reduces to equality if $n=0.$

Then we have%
\begin{align}
\sum_{n=0}^{m}\left\vert \alpha _{n}\right\vert \limfunc{tr}\left[ \left( 
\frac{\left\vert A\right\vert ^{p}}{p}+\frac{\left\vert B\right\vert ^{q}}{q}%
\right) ^{n}\right] & \leq \sum_{n=0}^{m}\left\vert \alpha _{n}\right\vert %
\left[ \frac{1}{p}\limfunc{tr}\left( \left\vert A\right\vert ^{pn}\right) +%
\frac{1}{q}\limfunc{tr}\left( \left\vert B\right\vert ^{qn}\right) \right]
\label{e.4.13} \\
& =\limfunc{tr}\left[ \frac{1}{p}\sum_{n=0}^{m}\left\vert \alpha
_{n}\right\vert \left\vert A\right\vert ^{pn}+\frac{1}{q}\sum_{n=0}^{m}\left%
\vert \alpha _{n}\right\vert \left\vert B\right\vert ^{qn}\right]  \notag
\end{align}%
for any $m\geq 1$ and $p,q>1$ with $\frac{1}{p}+\frac{1}{q}=1.$

From (\ref{e.4.11}) and (\ref{e.4.13}) we get%
\begin{align}
\left\vert \limfunc{tr}\left( \sum_{n=0}^{m}\alpha _{n}\left\vert AB^{\ast
}\right\vert ^{r}\right) \right\vert & \leq \limfunc{tr}\left[
\sum_{n=0}^{m}\left\vert \alpha _{n}\right\vert \left( \frac{\left\vert
A\right\vert ^{p}}{p}+\frac{\left\vert B\right\vert ^{q}}{q}\right) ^{n}%
\right]  \label{e.4.14} \\
& \leq \limfunc{tr}\left[ \frac{1}{p}\sum_{n=0}^{m}\left\vert \alpha
_{n}\right\vert \left\vert A\right\vert ^{pn}+\frac{1}{q}\sum_{n=0}^{m}\left%
\vert \alpha _{n}\right\vert \left\vert B\right\vert ^{qn}\right]  \notag
\end{align}%
for any $m\geq 1$ and $p,q>1$ with $\frac{1}{p}+\frac{1}{q}=1.$

Since $\limfunc{tr}\left( \left\vert A\right\vert ^{p}\right) ,$\ $\limfunc{%
tr}\left( \left\vert B\right\vert ^{q}\right) <R,$ then all the series whose
partial sums are involved in (\ref{e.4.14}) are convergent, then by letting $%
m\rightarrow \infty $ in (\ref{e.4.14}) we deduce the desired inequality (%
\ref{e.4.10}).
\end{proof}

\begin{example}
\label{Ex 4.2}a) If we take $f(\lambda )=\left( 1\pm \lambda \right) ^{-1}$, 
$\left\vert \lambda \right\vert <1$ then we get the inequalities%
\begin{eqnarray}
\left\vert \limfunc{tr}\left( \left( I_{n}\pm \left\vert AB^{\ast
}\right\vert \right) ^{-1}\right) \right\vert &\leq &\limfunc{tr}\left( %
\left[ I_{n}-\left( \frac{\left\vert A\right\vert ^{p}}{p}+\frac{\left\vert
B\right\vert ^{q}}{q}\right) \right] ^{-1}\right)  \label{e.4.15} \\
&\leq &\limfunc{tr}\left[ \frac{1}{p}\left( I_{n}-\left\vert A\right\vert
^{p}\right) ^{-1}+\frac{1}{q}\left( I_{n}-\left\vert B\right\vert
^{q}\right) ^{-1}\right] ,  \notag
\end{eqnarray}%
where $A$ and $B$ are matrices in $M_{n}\left( \mathbb{C}\right) $ with $%
\limfunc{tr}\left( \left\vert A\right\vert ^{p}\right) ,$\ $\limfunc{tr}%
\left( \left\vert B\right\vert ^{q}\right) <1$ and\ $p,q>1$ with $\frac{1}{p}%
+\frac{1}{q}=1.$

b) If we take $f(\lambda )=\exp \lambda ,$ then we have the inequalities%
\begin{eqnarray}
\limfunc{tr}\left( \exp \left( \left\vert AB^{\ast }\right\vert \right)
\right) &\leq &\limfunc{tr}\left[ \exp \left( \frac{\left\vert A\right\vert
^{p}}{p}+\frac{\left\vert B\right\vert ^{q}}{q}\right) \right]
\label{e.4.16} \\
&\leq &\limfunc{tr}\left[ \frac{1}{p}\exp \left( \left\vert A\right\vert
^{p}\right) +\frac{1}{q}\exp \left( \left\vert B\right\vert ^{q}\right) %
\right] ,  \notag
\end{eqnarray}%
where $A$ and $B$ are matrices in $M_{n}\left( \mathbb{C}\right) $ and\ $%
p,q>1$ with $\frac{1}{p}+\frac{1}{q}=1.$
\end{example}

Finally, we can state the following result:

\begin{proposition}
\label{p.4.3}Let $f(\lambda ):=\sum_{n=0}^{\infty }\alpha _{n}\lambda ^{n}$
be a power series with complex coefficients and convergent on the open disk $%
D\left( 0,R\right) ,$ $R>0.$ If $A$ and $B$ are commuting positive
semidefinite matrices in $M_{n}\left( \mathbb{C}\right) $ with $\limfunc{tr}%
\left( A^{p}\right) ,$\ $\limfunc{tr}\left( B^{q}\right) <R,$ where\ $p,q>1$
with $\frac{1}{p}+\frac{1}{q}=1,$ then we have the inequality%
\begin{equation}
\left\vert \limfunc{tr}\left( f\left( AB\right) \right) \right\vert \leq %
\left[ \limfunc{tr}(f_{a}\left( A^{p}\right) )\right] ^{1/p}\left[ \limfunc{%
tr}(f_{a}\left( B^{q}\right) )\right] ^{1/q}.  \label{e.4.17}
\end{equation}
\end{proposition}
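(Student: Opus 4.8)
The plan is to follow the pattern of Propositions \ref{p.4.1} and \ref{p.4.2}, this time using Manjegani's H\"older type trace inequality (\ref{e.1.14}) as the basic ingredient instead of Yang's inequality.

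First I would collect the elementary facts about $A$ and $B$. Since $A,B\geq 0$ commute, the square root $A^{1/2}$ commutes with $B$, so $AB=A^{1/2}BA^{1/2}$ is positive semidefinite; hence $(AB)^{k}=A^{k}B^{k}$ is positive semidefinite and $\limfunc{tr}\left[(AB)^{k}\right]\geq 0$ for every $k\geq 0$. Because $A^{k}$ and $B^{k}$ are positive semidefinite and $\frac{1}{p}+\frac{1}{q}=1$, the inequality (\ref{e.1.14}) applied to $A^{k}$ and $B^{k}$ gives
\begin{equation*}
\limfunc{tr}\left[(AB)^{k}\right]=\limfunc{tr}\left( A^{k}B^{k}\right) \leq \left[ \limfunc{tr}\left( A^{pk}\right) \right] ^{1/p}\left[ \limfunc{tr}\left( B^{qk}\right) \right] ^{1/q},
\end{equation*}
with equality when $k=0$.

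Next, for any $m\geq 1$, the generalized triangle inequality for the modulus together with the nonnegativity of $\limfunc{tr}\left[(AB)^{k}\right]$ and the displayed bound yields
\begin{equation*}
\left\vert \limfunc{tr}\left[ \sum_{k=0}^{m}\alpha _{k}(AB)^{k}\right] \right\vert \leq \sum_{k=0}^{m}\left\vert \alpha _{k}\right\vert \left[ \limfunc{tr}\left( A^{pk}\right) \right] ^{1/p}\left[ \limfunc{tr}\left( B^{qk}\right) \right] ^{1/q}.
\end{equation*}
Writing $\left\vert \alpha _{k}\right\vert =\left\vert \alpha _{k}\right\vert ^{1/p}\left\vert \alpha _{k}\right\vert ^{1/q}$ and regrouping, the right-hand side equals $\sum_{k=0}^{m}\bigl( \left\vert \alpha _{k}\right\vert \limfunc{tr}(A^{pk})\bigr) ^{1/p}\bigl( \left\vert \alpha _{k}\right\vert \limfunc{tr}(B^{qk})\bigr) ^{1/q}$, and the weighted discrete H\"older inequality with exponents $p$ and $q$ then gives
\begin{equation*}
\left\vert \limfunc{tr}\left[ \sum_{k=0}^{m}\alpha _{k}(AB)^{k}\right] \right\vert \leq \left[ \limfunc{tr}\left( \sum_{k=0}^{m}\left\vert \alpha _{k}\right\vert (A^{p})^{k}\right) \right] ^{1/p}\left[ \limfunc{tr}\left( \sum_{k=0}^{m}\left\vert \alpha _{k}\right\vert (B^{q})^{k}\right) \right] ^{1/q}.
\end{equation*}

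Finally I would let $m\to \infty $. Since $A^{p}$ is positive semidefinite, its operator norm equals its spectral radius, which is bounded by $\limfunc{tr}(A^{p})<R$; similarly for $B^{q}$, and $\limfunc{tr}(AB)\leq \lbrack \limfunc{tr}(A^{p})]^{1/p}[\limfunc{tr}(B^{q})]^{1/q}<R$, so the three power series $\sum_{k}\alpha _{k}(AB)^{k}$, $\sum_{k}\left\vert \alpha _{k}\right\vert (A^{p})^{k}$ and $\sum_{k}\left\vert \alpha _{k}\right\vert (B^{q})^{k}$ all converge in $M_{n}(\mathbb{C})$, the latter two to $f_{a}(A^{p})$ and $f_{a}(B^{q})$. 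Passing to the limit and using the continuity of $\limfunc{tr}(\cdot )$ on $M_{n}(\mathbb{C})$ produces exactly (\ref{e.4.17}). I do not expect a genuine obstacle here; the points needing a little care are the identity $AB=A^{1/2}BA^{1/2}\geq 0$ and $(AB)^{k}=A^{k}B^{k}$ (so that (\ref{e.1.14}) is applicable with exponents $pk$ and $qk$), and the bookkeeping in the H\"older splitting of the coefficients $\left\vert \alpha _{k}\right\vert $.
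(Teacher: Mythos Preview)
Your proposal is correct and follows essentially the same route as the paper: apply Manjegani's H\"older trace inequality (\ref{e.1.14}) to $A^{k},B^{k}$ to bound $\limfunc{tr}((AB)^{k})=\limfunc{tr}(A^{k}B^{k})$, then use the triangle inequality and the discrete H\"older inequality on the coefficients, and finally pass to the limit $m\to\infty$ by continuity of the trace. You actually supply more justification than the paper does (the positivity of $AB$ via $A^{1/2}BA^{1/2}$ and the spectral-radius argument for convergence); just note that in your closing parenthetical you mean (\ref{e.1.14}) is applied with H\"older exponents $p,q$ to the matrices $A^{k},B^{k}$, not with exponents $pk,qk$.
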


\begin{proof}
Since $A$ and $B$ are commuting positive semidefinite matrices in $%
M_{n}\left( \mathbb{C}\right) ,$ then by (\ref{e.1.14}) we have for any
natural number $n$ including $n=0$ that%
\begin{equation}
\limfunc{tr}(\left( AB\right) ^{n})=\limfunc{tr}(A^{n}B^{n})\leq \left[ 
\limfunc{tr}(A^{np})\right] ^{1/p}\left[ \limfunc{tr}(B^{nq})\right] ^{1/q},
\label{e.4.18}
\end{equation}%
where\ $p,q>1$ with $\frac{1}{p}+\frac{1}{q}=1.$

By (\ref{e.4.18}) and the weighted H\"{o}lder discrete inequality we have%
\begin{align*}
\left\vert \limfunc{tr}\left( \sum_{n=0}^{m}\alpha _{n}\left( AB\right)
^{n}\right) \right\vert & =\left\vert \sum_{n=0}^{m}\alpha _{n}\limfunc{tr}%
(A^{n}B^{n})\right\vert \leq \sum_{n=0}^{m}\left\vert \alpha _{n}\right\vert
\left\vert \limfunc{tr}(A^{n}B^{n})\right\vert \\
& \leq \sum_{n=0}^{m}\left\vert \alpha _{n}\right\vert \left[ \limfunc{tr}%
(A^{np})\right] ^{1/p}\left[ \limfunc{tr}(B^{nq})\right] ^{1/q} \\
& \leq \left( \sum_{n=0}^{m}\left\vert \alpha _{n}\right\vert \left( \left[ 
\limfunc{tr}(A^{np})\right] ^{1/p}\right) ^{p}\right) ^{1/p} \\
& \times \left( \sum_{n=0}^{m}\left\vert \alpha _{n}\right\vert \left( \left[
\limfunc{tr}(B^{nq})\right] ^{1/q}\right) ^{q}\right) ^{1/q} \\
& =\left( \sum_{n=0}^{m}\left\vert \alpha _{n}\right\vert \limfunc{tr}%
(A^{np})\right) ^{1/p}\left( \sum_{n=0}^{m}\left\vert \alpha _{n}\right\vert 
\limfunc{tr}(B^{nq})\right) ^{1/q} \\
& =\left( \limfunc{tr}(\sum_{n=0}^{m}\left\vert \alpha _{n}\right\vert
A^{np})\right) ^{1/p}\left( \limfunc{tr}(\sum_{n=0}^{m}\left\vert \alpha
_{n}\right\vert B^{nq})\right) ^{1/q}
\end{align*}%
where\ $p,q>1$ with $\frac{1}{p}+\frac{1}{q}=1.$

The proof follows now in a similar way with the ones from above and the
details are omitted.
\end{proof}

\begin{example}
\label{Ex 4.3}a) If we take $f(\lambda )=\left( 1\pm \lambda \right) ^{-1}$, 
$\left\vert \lambda \right\vert <1$ then we get the inequality%
\begin{equation}
\left\vert \limfunc{tr}\left( \left( I_{n}\pm AB\right) ^{-1}\right)
\right\vert \leq \left[ \limfunc{tr}(\left( I_{n}-A^{p}\right) ^{-1})\right]
^{1/p}\left[ \limfunc{tr}(\left( I_{n}-B^{q}\right) ^{-1})\right] ^{1/q},
\label{e.4.19}
\end{equation}%
for any $A$ and $B$ commuting positive semidefinite matrices in $M_{n}\left( 
\mathbb{C}\right) $ with $\limfunc{tr}\left( A^{p}\right) ,$\ $\limfunc{tr}%
\left( B^{q}\right) <1,$ where\ $p,q>1$ with $\frac{1}{p}+\frac{1}{q}=1.$

b) If we take $f(\lambda )=\exp \lambda ,$ then we have the inequality%
\begin{equation}
\limfunc{tr}\left( \exp \left( AB\right) \right) \leq \left[ \limfunc{tr}%
(\exp \left( A^{p}\right) )\right] ^{1/p}\left[ \limfunc{tr}(\exp \left(
B^{q}\right) )\right] ^{1/q},  \label{e.4.20}
\end{equation}%
for any $A$ and $B$ commuting positive semidefinite matrices in $M_{n}\left( 
\mathbb{C}\right) $ and\ $p,q>1$ with $\frac{1}{p}+\frac{1}{q}=1.$
\end{example}

\end{document}